\documentclass[11pt,reqno]{amsart}
\pdfoutput=1

\usepackage{latexsym,amsthm,amssymb,amsmath,amscd,hyperref}
\usepackage{enumitem}
\usepackage[all,cmtip]{xy}

\setlength{\topmargin}{-1pc}
\setlength{\textheight}{51pc}
\setlength{\oddsidemargin}{.5cm}
\setlength{\evensidemargin}{.75cm}
\setlength{\textwidth}{15.2cm}
\setlength{\parindent}{0cm} \setlength{\parskip}{0.3cm}

\newtheorem{theorem}{Theorem}[section]
\newtheorem{proposition}[theorem]{Proposition}
\newtheorem{lemma}[theorem]{Lemma}
\newtheorem{corollary}[theorem]{Corollary}

\theoremstyle{remark}
\newtheorem{remark}[theorem]{Remark}

\newcommand{\Cl}{\textup{Cl}}
\newcommand{\Prim}{\textup{Prim}}

\newcommand{\Spec}{\textup{Spec}}
\newcommand{\spn}{\textup{span}}

\renewcommand{\a}{\mathfrak{a}}
\renewcommand{\b}{\mathfrak{b}}
\renewcommand{\c}{\mathfrak{c}}
\renewcommand{\k}{\mathfrak{k}}
\newcommand{\m}{\mathfrak{m}}
\newcommand{\n}{\mathfrak{n}}
\newcommand{\p}{\mathfrak{p}}

\newcommand{\I}{\mathcal{I}}
\newcommand{\J}{\mathcal{J}}
\renewcommand{\P}{\mathcal{P}}

\begin{document}

\title[C*-algebras from actions of congruence monoids on rings of integers]{C*-algebras from actions of congruence monoids\\ on rings of algebraic integers} 
\author{Chris Bruce}
\address[Chris Bruce]{
Department of Mathematics and Statistics\\
University of Victoria\\
Victoria, BC V8W 3R4\\
Canada}
\email{cmbruce@uvic.ca}

\subjclass[2010]{Primary 46L05; Secondary 11R04.} 
\thanks{Research supported by the Natural Sciences and Engineering Research Council of Canada through an Alexander Graham Bell CGS-D award.\\ This work was done as part of the author's PhD project at the University of Victoria.}

\maketitle

\begin{abstract}
Let $K$ be a number field with ring of integers $R$. Given a modulus $\mathfrak{m}$ for $K$ and a group $\Gamma$ of residues modulo $\mathfrak{m}$, we consider the semi-direct product $R\rtimes R_{\mathfrak{m},\Gamma}$ obtained by restricting the multiplicative part of the full $ax+b$-semigroup over $R$ to those algebraic integers whose residue modulo $\mathfrak{m}$ lies in $\Gamma$, and we study the left regular C*-algebra of this semigroup. We give two presentations of this C*-algebra and realize it as a full corner in a crossed product C*-algebra. We also establish a faithfulness criterion for representations in terms of projections associated with ideal classes in a quotient of the ray class group modulo $\mathfrak{m}$, and we explicitly describe the primitive ideals using relations only involving the range projections of the generating isometries; this leads to an explicit description of the boundary quotient. 
Our results generalize and strengthen those of Cuntz, Deninger, and Laca and of Echterhoff and Laca for the C*-algebra of the full $ax+b$-semigroup.
We conclude by showing that our construction is functorial in the appropriate sense; in particular, we prove that the left regular C*-algebra of $R\rtimes R_{\mathfrak{m},\Gamma}$ embeds canonically into the left regular C*-algebra of the full $ax+b$-semigroup. Our methods rely heavily on Li's theory of semigroup C*-algebras.
\end{abstract}

\section{Introduction}
 
\subsection{Historical context} 
Cuntz pioneered the study of C*-algebras associated with $ax+b$-semigroups over the ring $\mathbb{Z}$ in \cite{Cun}; his work was motivated by the construction of Bost and Connes in \cite{BC}. 
Cuntz introduced a C*-algebra $\mathcal{Q}_\mathbb{N}$ defined using generators and relations involving the additive group of $\mathbb{Z}$ and the multiplicative semigroup $\mathbb{N}^\times:=\mathbb{N}\setminus\{0\}$. The C*-algebra $\mathcal{Q}_\mathbb{N}$ can be canonically (and faithfully) represented on $\ell^2(\mathbb{Z})$, $\mathcal{Q}_\mathbb{N}$ is simple and purely infinite, and admits a unique KMS state for a canonical time evolution, see \cite{Cun}. Cuntz showed that $\mathcal{Q}_\mathbb{N}$ can be realized as a full corner in the crossed product C*-algebra for the action of the $ax+b$-group $\mathbb{Q}\rtimes\mathbb{Q}_+^*$ on the ring $\mathbb{A}_{\mathbb{Q},f}$ of finite adeles over $\mathbb{Q}$ and then discussed its $K$-theory.
Another C*-algebra $\mathcal{Q}_\mathbb{Z}$ was defined in \cite{Cun} using an analogous presentation but with the larger multiplicative semigroup $\mathbb{Z}^\times:=\mathbb{Z}\setminus\{0\}$ of all non-zero integers in place of $\mathbb{N}^\times$.

Laca and Raeburn initiated the study of Toeplitz algebras in this context, see \cite{LR3}. They showed that the semigroup $\mathbb{N}\rtimes\mathbb{N}^\times$ is quasi-lattice ordered, and they studied phase transitions for a canonical time evolution on its left regular C*-algebra $C_\lambda^*(\mathbb{N}\rtimes\mathbb{N}^\times)$ (which they called the ``Toeplitz algebra'' of $\mathbb{N}\rtimes\mathbb{N}^\times$). They also exhibited Cuntz's $\mathcal{Q}_\mathbb{N}$ as the boundary quotient of $C_\lambda^*(\mathbb{N}\rtimes\mathbb{N}^\times)$. In a subsequent paper, Laca and Neshveyev parameterized the Nica spectrum of $\mathbb{N}\rtimes\mathbb{N}^\times$ in terms of an adelic space and computed the type of each equilibrium state at high temperature, see \cite{LN2}.

Building on \cite{Cun}, Cuntz and Li introduced the so-called ring C*-algebras in \cite{CL1} (see also \cite{Li1}). In particular, given a ring of integers $R$ in a number field $K$, Cuntz and Li defined a C*-algebra $\mathfrak{A}[R]$ using generators and relations generalizing those used in \cite{Cun} to define $\mathcal{Q}_\mathbb{Z}$, so that for the ring $\mathbb{Z}$, their construction gave the C*-algebra $\mathcal{Q}_\mathbb{Z}$. They showed that $\mathfrak{A}[R]$ also has a canonical (and faithful) representation on $\ell^2(R)$, and proved that $\mathfrak{A}[R]$ is simple and purely infinite. They gave a description of $\mathfrak{A}[R]$ as a canonical full corner in the crossed product for the action of the $ax+b$-group $K\rtimes K^\times$ on the ring $\mathbb{A}_{K,f}$ of finite adeles over $K$, and used this description to make a connection with Bost-Connes type systems for arbitrary number fields as defined in \cite{LLN}.
The problem of computing the $K$-theory of $\mathfrak{A}[R]$ was particularly difficult; it was solved in the case that $K$ has only two roots of unity by Cuntz and Li in \cite{CL2} using a duality theorem for global fields, and then in full generality by Li and L\"{u}ck in \cite{LiLu}.

Cuntz, Deninger, and Laca defined Toeplitz algebras associated with rings of integers of arbitrary number fields in \cite{CDL}. Given a number field $K$ with ring of integers $R$, they defined a C*-algebra $\mathfrak{T}[R]$ using generators and relations similar to those used to define $\mathfrak{A}[R]$, but without certain ``tightness'' relations. They proved that $\mathfrak{T}[R]$ is canonically isomorphic to the left regular C*-algebra $C_\lambda^*(R\rtimes R^\times)$ of the $ax+b$-semigroup $R\rtimes R^\times$ where the multiplicative semigroup $R^\times:=R\setminus\{0\}$ acts on (the additive group of) $R$ by multiplication.  
In \cite{CDL}, the left regular C*-algebra of $R\rtimes R^\times$ is denoted by $\mathfrak{T}$ and is called the ``Toeplitz algebra'' of $R\rtimes R^\times$. 
Cuntz, Deninger, and Laca studied phase transitions for a canonical time evolution on $C_\lambda^*(R\rtimes R^\times)$, and they proved that the associated C*-dynamical system exhibits several interesting properties. They gave a description of $C_\lambda^*(R\rtimes R^\times)$ as a full corner in a crossed product for an action of the $ax+b$-group $K\rtimes K^\times$ on a certain adelic space, and proved that their construction was functorial for inclusions of rings of integers. They also showed that the ring C*-algebra $\mathfrak{A}[R]$ of $R$ appeared naturally as a quotient of $C_\lambda^*(R\rtimes R^\times)$.

Since \cite{CDL} appeared, the C*-algebras of $ax+b$-semigroups over rings of algebraic integers have been studied intensively. They inspired Neshveyev to prove a powerful general result on KMS states for groupoid C*-algebras, see \cite{Ne}, where Neshveyev also gives an alternative approach to proving the phase transition theorem from \cite{CDL}.
These C*-algebras also provided a motivating class of examples for Li's theory of semigroup C*-algebras developed in \cite{Li1,Li2} (see also \cite[Chapter~5]{CELY}). 
In \cite{EL}, Echterhoff and Laca developed general results on primitive ideal spaces of crossed products, then used these results to compute the primitive ideal space of $C_\lambda^*(R\rtimes R^\times)$. Cuntz, Echterhoff, and Li proved a general formula for the $K$-theory of a large class of semigroup C*-algebras in \cite{CEL1,CEL2} which, as a particular case, gives a formula for the $K$-theory of $C_\lambda^*(R\rtimes R^\times)$. They also showed in \cite{CEL1} that $C_\lambda^*(R\rtimes R^\times)$ is purely infinite, has the ideal property, but does not have real rank zero. 
Building on these works, Li gave an explicit description of the primitive ideals in $C_\lambda^*(R\rtimes R^\times)$ in \cite{Li4} and used $K$-theoretic invariants to show that one can recover the Dedekind zeta function of $K$ from $C_\lambda^*(R\rtimes R^\times)$, provided that one knows the number of roots of unity in $K$. Continuing his investigation, Li showed in \cite{Li5} that one can recover both the Dedekind zeta function of $K$ and the ideal class group $\Cl(K)$ of $K$ from $C_\lambda^*(R\rtimes R^\times)$ together with its canonical diagonal sub-C*-algebra. Li also studied the semigroup C*-algebras of $ax+b$-semigroups for more general classes of rings in \cite{Li6}, where he showed that some of the results on ideal structure, pure infiniteness, and $K$-theory can be generalized; in \cite{Li7}, he gives an alternative approach to pure infiniteness of these $ax+b$-semigroup C*-algebras using partial transformation groupoids.
Recently, Laca and Warren in \cite{LW} have used Neshveyev's characterization of traces on crossed products from \cite[Section~2]{Ne} to describe the low temperature KMS equilibrium states from the phase transition theorem in \cite{CDL} in terms of ergodic invariant measures for groups of linear toral automorphisms. As a result, this revealed a connection with the generalized Furstenberg conjecture in ergodic theory.

\subsection{Overview of the construction}
In this paper, we generalize the construction from \cite{CDL} by considering the C*-algebras of a larger class of semigroups. The construction of these semigroups depends not only on a number field $K$, but also on additional number-theoretic data that arise naturally in the study of the ray class fields of $K$, that is, in class field theory.  
Namely, given a number field $K$ with ring of integers $R$, a modulus $\m$ for $K$, and a group $\Gamma$ of residues modulo $\m$, the associated congruence monoid $R_{\m,\Gamma}$ is the multiplicative monoid of algebraic integers in $R$ that reduce to an element of $\Gamma$ modulo $\m$. We form the semi-direct product $R\rtimes R_{\m,\Gamma}$ where $R_{\m,\Gamma}$ acts on $R$ by multiplication, and investigate the left regular C*-algebra of this semigroup.
We formulate and prove the appropriate generalizations of several of the results mentioned above for the full $ax+b$-semigroup. In addition, we give a new faithfulness criterion for representations, see Section~\ref{sec:faithfulreps}.

We now briefly explain our construction in the special case of the number field $K=\mathbb{Q}$, see Section~\ref{sec:construction} for a detailed discussion of the general case. Let $\P_\mathbb{Q}$ denote the set of rational prime numbers, and let $w$ be the unique embedding $w:\mathbb{Q}\hookrightarrow\mathbb{R}$. A \emph{modulus} for $\mathbb{Q}$ is a function $\m:\{w\}\sqcup\P_\mathbb{Q}\to \mathbb{N}$ such that $\m(w)\in\{0,1\}$ and $\m(p)=0$ for all but finitely many primes $p\in\P_\mathbb{Q}$. Denote by $m$ the positive integer $\prod_{p\in\P_\mathbb{Q}}p^{\m(p)}$. The \emph{multiplicative group of residues modulo $\m$} is $(\mathbb{Z}/\m)^*:=\{\pm 1\}\times(\mathbb{Z}/m\mathbb{Z})^*$ where $(\mathbb{Z}/m\mathbb{Z})^*$ is the multiplicative group of invertible elements in the ring $\mathbb{Z}/m\mathbb{Z}$. For $a\in\mathbb{Z}$ such that $\gcd(a,m)=1$, the \emph{residue of $a$ modulo $\m$} is 
\[
[a]_\m:=(\textup{sign}(a),a+m\mathbb{Z})\in (\mathbb{Z}/\m)^*
\]
where $\textup{sign}(a):=a/|a|$. Dealing with moduli allows us to speak of congruence relations that can involve positivity conditions.
Let $\Gamma\subseteq(\mathbb{Z}/\m)^*$ be a subgroup, and let 
\[
\mathbb{Z}_{\m,\Gamma}:=\{a\in \mathbb{Z}^\times : \gcd(a,m)=1, [a]_\m\in\Gamma\}   
\]
where $\mathbb{Z}^\times:=\mathbb{Z}\setminus \{0\}$. 
Since $\Gamma$ is a group, $\mathbb{Z}_{\m,\Gamma}$ is a unital semigroup under multiplication. Such semigroups are called \emph{congruence monoids}, see \cite[Definition~5]{HK} and \cite{G-HK}. 
Notice that $\mathbb{Z}_{\m,\Gamma}$ is a disjoint union of arithmetic progressions; for example, if $\Gamma$ is the trivial group, then $\mathbb{Z}_{\m,\Gamma}=1+m\mathbb{N}$.
We form the semi-direct product semigroup $\mathbb{Z}\rtimes\mathbb{Z}_{\m,\Gamma}$ with respect to the action of $\mathbb{Z}_{\m,\Gamma}$ on (the additive group of) $\mathbb{Z}$ given by multiplication. The left regular C*-algebra of $\mathbb{Z}\rtimes\mathbb{Z}_{\m,\Gamma}$ is the sub-C*-algebra of $\mathcal{B}(\ell^2(\mathbb{Z}\rtimes\mathbb{Z}_{\m,\Gamma}))$ generated by the isometries $\lambda_{(b,a)}$ for $(b,a)\in\mathbb{Z}\rtimes\mathbb{Z}_{\m,\Gamma}$ defined via the left translation action of $\mathbb{Z}\rtimes\mathbb{Z}_{\m,\Gamma}$ on itself. 
In this article, we study C*-algebras of semigroups of this kind and their analogues for general number fields.

It is very natural to consider C*-algebras associated with semigroups of the form $R\rtimes M$ where $M$ is a subsemigroup of $R^\times$. For $K=\mathbb{Q}$ and $R=\mathbb{Z}$, such C*-algebras have already been considered in two special cases: Larsen and Li in \cite{LarLi}  considered the \emph{2-adic ring C*-algebra} associated with the semigroup $\mathbb{Z}\rtimes [2\rangle$ where $[2\rangle:=\{1,2,2^2,2^3,...\}$, and Barlak, Omland, and Stammeier in \cite{BOS} considered C*-algebras associated with semigroups of the form $\mathbb{Z}\rtimes M$ where $M$ is a subsemigroup of $\mathbb{N}^\times$ generated by a non-empty family of relative prime numbers.
If we consider the special case where $\Gamma=\{1\}\times(\mathbb{Z}/m\mathbb{Z})^*$, then $\mathbb{Z}_{\m,\Gamma}$ is the subsemigroup of $\mathbb{N}^\times$ generated by the prime numbers that do not divide $m$, so that our $\mathbb{Z}\rtimes\mathbb{Z}_{\m,\Gamma}$ is a semigroup of the  type considered in \cite{BOS}.

Some of the analysis in Sections~\ref{sec:construction},~\ref{subsec:presentation},~and~\ref{subsec:gpoidmodel} can likely be generalized to other semigroups of the form $R\rtimes M$. However, results in later sections of this paper rely heavily on $M$ being a congruence monoid, which shows that actions of congruence monoids give rise to particularly nice semigroups, and we thus focus on this case from the beginning to avoid unnecessary technical difficulties. The author plans to consider more general semigroups of the form $R\rtimes M$ in a future work.

\subsection{Outlook}
We now briefly mention two works that build directly on the results of this paper.
The semigroup C*-algebras that we consider here carry canonical time evolutions coming from the norm map on $K$, and a computation of the KMS and ground states of the associated C*-dynamical systems is worked out in \cite{Bru}. There, the finite group $\I_\m/i(K_{\m,\Gamma})$, which appears first in Section~\ref{sec:construction} below, plays an important role. For instance, for each $\beta>2$, the simplex of KMS$_\beta$ states for the canonical time evolution on $C_\lambda^*(R\rtimes R_{\m,\Gamma})$ decomposes over $\I_\m/i(K_{\m,\Gamma})$, whereas uniqueness for $\beta$ in the critical interval $[1,2]$ relies on classical properties of the $L$-functions associated with characters of $\I_\m/i(K_{\m,\Gamma})$, see \cite[Theorem~3.2]{Bru}.

Another natural problem is to determine whether the analyses from \cite{LiLu,Li4,Li5} on $K$-theoretic invariants can be carried out for C*-algebras arising from actions of congruence monoids on rings of algebraic integers. This is investigated in \cite{BruLi}, where we show that the left regular semigroup C*-algebra $C_\lambda^*(R\rtimes R_{\m,\Gamma})$ contains subtle number-theoretic information about $K$ and about a certain class field (i.e., finite abelian extension) of $K$ that is naturally associated with the data $(\m,\Gamma)$, see \cite[Theorem~5.5]{BruLi}. Even in the case of the full $ax+b$-semigroup over $R$, said theorem is novel since no connection with class field theory had been made previously. It is further shown in \cite[Section~3]{BruLi} that $C_\lambda^*(R\rtimes R_{\m,\Gamma})$ is purely infinite in a very strong sense.

\subsection{Organization of this paper}
We begin in Section~\ref{sec:preliminaries} with a brief discussion of notation and preliminaries for semigroup C*-algebras in Section~\ref{subsec:semigpC*} and for moduli of algebraic number fields in Section~\ref{subsec:moduli}. In Section~\ref{sec:construction}, we define $R\rtimes R_{\m,\Gamma}$ and take a first step towards understanding $C_\lambda^*(R\rtimes R_{\m,\Gamma})$; namely, we compute the semilattice of constructible right ideals of $R\rtimes R_{\m,\Gamma}$ and prove that this semilattice satisfies the independence condition from \cite{Li2}, see Proposition~\ref{prop:CI}. This puts us in a setting where we can use general results from Li's theory of semigroup C*-algebras from \cite{Li2,Li3} (see also \cite{Li6} and \cite[Chapter~5]{CELY}). 

We begin our study of the left regular C*-algebra $C_\lambda^*(R\rtimes R_{\m,\Gamma})$ in Section~\ref{subsec:presentation} where we give two presentations for $C_\lambda^*(R\rtimes R_{\m,\Gamma})$ in terms of explicit generators and relations, see Propositions~\ref{prop:full=reduced}~and~\ref{prop:CDLfamily}. In Section~\ref{subsec:gpoidmodel},  we realize $C_\lambda^*(R\rtimes R_{\m,\Gamma})$ as a full corner in a crossed product and hence also as the C*-algebra of a groupoid, see Equation~\eqref{eqn:crossedproduct} and Proposition~\ref{prop:gpoidisom}. Then, in Section~\ref{sec:faithfulreps}, we follow the approach of \cite[Theorem~3.7]{LR} to establish a faithfulness criterion for representations of $C_\lambda^*(R\rtimes R_{\m,\Gamma})$ in terms of spanning projections of the canonical diagonal sub-C*-algebra, see Theorem~\ref{thm:faithful}.

Section~\ref{sec:primideals} contains an explicit description of the primitive ideal space of $C_\lambda^*(R\rtimes R_{\m,\Gamma})$, which generalizes \cite[Theorem~3.6]{EL}, see Theorem~\ref{thm:primideals}. However, in the proof of Theorem~\ref{thm:primideals}, we use a general result by Sims and Williams for groupoid C*-algebras, see \cite[Lemma~4.6]{SW}, rather than working with crossed product C*-algebras as in \cite{EL}. We also give an explicit presentation of the primitive ideals using relations that only involve the range projections of the generating isometries. This presentation is motivated by the description of the primitive ideals of $C_\lambda^*(R\rtimes R^\times)$ given in \cite[Section~3]{Li4} and \cite{Li5}. We then prove in Section~\ref{sec:bq} that the boundary quotient of $C_\lambda^*(R\rtimes R_{\m,\Gamma})$ can be realized as a semigroup crossed product; this generalizes the semigroup crossed product description for the ring C*-algebra of $R$.

In Section~\ref{sec:functoriality}, we show that the number-theoretic input for our construction carries a canonical partial order, and that our construction respects this order, that is, it is functorial in the appropriate sense, see Propositions~\ref{prop:func1}~and~\ref{prop:func2}.

\subsection*{Acknowledgments.} 
I am grateful to my PhD supervisor, Marcelo Laca, for providing lots of helpful comments and feedback on the content and style of this article.
I would also like to thank Xin Li and Mak Trifkovi\'{c} for many helpful discussions and to thank the anonymous referee for several useful suggestions/comments and for mentioning the papers \cite{LarLi} and \cite{BOS}.

\section{Preliminaries}\label{sec:preliminaries}

\subsection{The left regular C*-algebra of a semigroup.}\label{subsec:semigpC*}
Let $P$ be a unital subsemigroup of a countable group $G$, and let $\{\delta_x : x\in P\}$ be the canonical orthonormal basis for $\ell^2(P)$. Each $p\in P$ gives rise to an isometry $\lambda_p$ in $\mathcal{B}(\ell^2(P))$ such that $\lambda_p(\delta_x)=\delta_{px}$ for all $x\in P$.  The \emph{left regular C*-algebra of $P$} is $C_\lambda^*(P):=C^*(\{\lambda_p : p\in P\})$. The canonical ``diagonal'' sub-C*-algebra of $C_\lambda^*(P)$ is $D_\lambda(P):=C_\lambda^*(P)\cap \ell^\infty(P)$, where we view $\ell^\infty(P)$ as sub-C*-algebra of $\mathcal{B}(\ell^2(P))$ in the canonical way. 
Since $P$ embeds into a group, $D_\lambda(P)$ coincides with the smallest unital sub-C*-algebra of $\ell^\infty(P)$ that is invariant under conjugation by the isometries $\lambda_p$ for $p\in P$ and the co-isometries $\lambda_p^*$ for $p\in P$; however, to see this we must introduce some ideas from \cite{Li2}.

For each subset $X\subseteq P$ and $p\in P$, let 
\[
pX:=\{px : x\in X\}\quad \text{ and }\quad p^{-1}(X):=(p^{-1}X)\cap P=\{p^{-1}x\in G : x\in X\}\cap P. 
\]
Consider the smallest collection $\J_P$ of subsets of $P$ such that
\begin{itemize}
	\item $\emptyset$ and $P$ are in $\J_P$;
	\item if $X$ is in $\J_P$ and $p$ is in $P$, then $pX$ and $p^{-1}(X)$ are in $\J_P$;
	\item if $X,Y\in\J_P$, then $X\cap Y\in\J_P$.
\end{itemize}
It is shown in \cite[Section 3]{Li2} that the first two conditions imply the third. Members of $\J_P$ are called \emph{constructible right ideals of $P$}, see \cite[Section~2]{Li2} and \cite[Definition~2.1]{Li3}. We refer the reader to \cite{Li2} or \cite[Section~A.2]{Li1} for a discussion of the motivation for considering constructible ideals and some of the history leading up to their conception.

Since $P$ embeds in a group, the results of \cite[Section~3]{Li2} show that
\[
D_\lambda(P)=\overline{\spn}(\{E_X : X \in \J_P\})
\]
where $E_X\in \mathcal{B}(\ell^2(P))$ is the orthogonal projection onto the subspace $\ell^2(X)\subseteq\ell^2(P)$. At this point, it is not difficult to see that $D_\lambda(P)$ is indeed the smallest unital sub-C*-algebra $D$ of $\ell^\infty(P)$ such that $p\in P$ and $d\in D$ implies $\lambda_pd\lambda_p^*\in D$ and  $\lambda_p^*d\lambda_p\in D$.

Following \cite[Definition~2.26]{Li2}, we say that $\J_P$ is \emph{independent} or $P$ satisfies the \emph{independence condition} if $\bigcup_{i=1}^mX_i=X$ for $X,X_1,...,X_m\in \J_P$ implies $X=X_i$ for some $1\leq i\leq m$. Semigroups satisfying the independence condition are particularly tractable; indeed, if $P$ satisfies the independence condition, then the diagonal C*-algebra $D_\lambda(P)$ enjoys a certain universal property, which we will discuss in Section~\ref{subsec:presentation}. Much of Section~\ref{sec:construction} is devoted to establishing that the class of semigroups under consideration in this paper satisfy the independence condition.

\subsection{Moduli and ray classes.} \label{subsec:moduli}

Let $K$ be a number field with ring of integers $R$, and let $R^\times:=R\setminus \{0\}$ denote the multiplicative semigroup of non-zero elements in $R$. 
Let $\P_K$ denote the set of all non-zero prime ideals of $R$, and let $\I$ denote the group of fractional ideals of $K$.
For $\a\in \I$, there is a unique factorization $\a=\prod_{\p\in\P_K}\p^{v_\p(\a)}$ where $v_\p(\a)\in\mathbb{Z}$, and $v_\p(\a)=0$ for all but finitely many $\p$; for $x\in K^\times:=K\setminus\{0\}$, we let $v_\p(x):=v_\p(xR)$. Let $i:K^\times\to \I$ be the group homomorphism $i(x):=xR$; the \emph{ideal class group of $K$} is given by $\Cl(K):=\I/i(K^\times)$.

If $[K:\mathbb{Q}]$ is the degree of $K$ over $\mathbb{Q}$, then there are exactly $[K:\mathbb{Q}]$ embeddings of $K$ into the complex numbers; these come in two flavours: there are the \emph{real embeddings} $w:K\hookrightarrow\mathbb{R}$ and the \emph{complex embeddings} $w:K\hookrightarrow\mathbb{C}$ such that $w(K)\nsubseteq \mathbb{R}$. We let $V_{K,\mathbb{R}}$ be the (finite) set of real embeddings of $K$.
A \emph{modulus} $\m$ for $K$ is a function $\m:V_{K,\mathbb{R}}\sqcup\P_K\to\mathbb{N}$ such that
\begin{itemize}
\item $\m_\infty:=\m\vert_{V_{K,\mathbb{R}}}:V_{K,\mathbb{R}}\to \mathbb{N}$ takes values in $\{0,1\}$;
\item $\m\vert_{\P_K}:\P_K\to\mathbb{N}$ is finitely supported, that is, $\m(\p)=0$ for all but finitely many $\p$.
\end{itemize}

Let $\m_0$ be the ideal $\m_0:=\prod_\p\p^{\m(\p)}$ of $R$. It is conventional to write $\m$ as a formal product $\m=\m_\infty\m_0$. The set of moduli for $K$ carries a canonical partial order; by definition, $\m\leq \n$ if and only if $\m_\infty(w)\leq \n_\infty(w)$ for all $w\in V_{K,\mathbb{R}}$ and $\m(\p)\leq \n(\p)$ for all $\p\in\P_K$; this is nothing more than the usually partial order on $\mathbb{N}$-valued functions. Traditionally, one says that $\m$ \emph{divides} $\n$ if $\m\leq \n$ and writes $\m\mid \n$ instead of $\m\leq \n$. In particular, a prime $\p$ divides $\m$ if and only if $\m(\p)>0$, and a real embedding $w$ divides $\m$ if and only if $\m_\infty(w)=1$. Thus, we will write $w\mid \m_\infty$ to indicate that $\m_\infty$ takes the value one at the real embedding $w$.
The \emph{multiplicative group of residues modulo $\m$} is 
\[
(R/\m)^*:= \prod_{w\mid\m_\infty} \{\pm 1\}\times(R/\m_0)^*.
\]
If $\m_\infty$ is trivial, that is, if $\m(w)=0$ for all real embeddings $w$, then $(R/\m)^*=(R/\m_0)^*$, and if $\m\vert_{\P_K}$ is trivial, so that $\m_0=R$, then $(R/\m)^*= \prod_{w\mid\m_\infty} \{\pm 1\}$. If $\m$ is trivial, then $(R/\m)^*$ is simply the trivial group.

Note that it does not make sense to talk about additive classes modulo $\m$. By the Chinese Remainder Theorem, $(R/\m_0)^*\cong \prod_{\p\mid\m_0}(R/\p^{\m(\p)})^*.$  Let 
\[
R_\m:=\{a\in R^\times : v_\p(a)=0 \text{ for all $\p$ such that }\p\mid\m_0\}
\]
be the multiplicative semigroup of non-zero algebraic integers that are coprime to the ideal $\m_0$. If $a\in R_\m$, then $a$ is invertible modulo $\m_0$, and we define its \emph{residue modulo $\m$} to be
\[
[a]_\m:=((\textup{sign}(w(a)))_{w\mid \m_\infty}, a+\m_0)\in (R/\m)^*,
\]
where $\textup{sign}(t):=t/|t|$ for any non-zero real number $t$. 

\begin{lemma}\label{prop:onto}
The map $R_\m\to (R/\m)^*$ given by $a\mapsto [a]_\m$ is a surjective semigroup homomorphism. 
\end{lemma}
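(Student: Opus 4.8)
The plan is to separate the two assertions. The homomorphism property is essentially formal, and all the genuine work lies in surjectivity, which is a disguised form of the classical approximation theorem at the archimedean places.

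I would first check that $a\mapsto[a]_\m$ is a well-defined semigroup homomorphism. For $a\in R_\m$ we have $a\neq 0$, so $w(a)\neq 0$ for every real embedding $w$ (each $w$ is injective), and $a$ is coprime to $\m_0$, so $a+\m_0\in(R/\m_0)^*$; hence $[a]_\m$ really lands in $(R/\m)^*$. For $a,b\in R_\m$, multiplicativity holds coordinatewise: $\textup{sign}(w(ab))=\textup{sign}(w(a)w(b))=\textup{sign}(w(a))\,\textup{sign}(w(b))$ because signs are multiplicative on nonzero reals and $w$ is a ring homomorphism, while $ab+\m_0=(a+\m_0)(b+\m_0)$ in $R/\m_0$. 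The identity $v_\p(ab)=v_\p(a)+v_\p(b)$ simultaneously confirms that $R_\m$ is closed under multiplication, so this step is just bookkeeping.

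For surjectivity I would fix a target $((\epsilon_w)_{w\mid\m_\infty},\bar c)\in(R/\m)^*$ and realize the two factors in turn. Since $R\to R/\m_0$ is surjective, pick a lift $c\in R$ of $\bar c$; as $\bar c$ is a unit in $R/\m_0$, the element $c$ is automatically coprime to $\m_0$, so $c\in R_\m$ and the $(R/\m_0)^*$-component of $[c]_\m$ is already correct. It then suffices to correct the signs \emph{without} disturbing the residue modulo $\m_0$, i.e.\ to find $t\in\m_0$ such that $a:=c+t$ satisfies $\textup{sign}(w(a))=\epsilon_w$ for all $w\mid\m_\infty$. Any such $a$ is congruent to $c$ modulo $\m_0$, hence still coprime to $\m_0$, and is nonzero since its real images are nonzero; thus $a\in R_\m$ with $[a]_\m=((\epsilon_w)_w,\bar c)$, giving surjectivity.

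The crux, and the only non-formal step, is this simultaneous \emph{independence of signs} at the real places, which I would argue geometrically. Let $K_\mathbb{R}:=K\otimes_\mathbb{Q}\mathbb{R}$ and let $\pi\colon K_\mathbb{R}\to\prod_{w\mid\m_\infty}\mathbb{R}$ be induced by the real embeddings dividing $\m_\infty$. Under the standard isomorphism $K_\mathbb{R}\cong\mathbb{R}^{r_1}\times\mathbb{C}^{r_2}$, the map $\pi$ is projection onto a block of real coordinates, hence $\mathbb{R}$-linear and surjective. A nonzero ideal $\m_0$ is a full lattice in $K_\mathbb{R}$, so the finitely generated group $\pi(\m_0)$ spans $\prod_{w\mid\m_\infty}\mathbb{R}$ and therefore contains a full-rank lattice $L$. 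The target set $O-(w(c))_{w\mid\m_\infty}$, where $O=\{x:\textup{sign}(x_w)=\epsilon_w\ \forall w\}$ is the prescribed open orthant, is a translated orthant and so contains balls of arbitrarily large radius; since $L$ is relatively dense, a ball whose radius exceeds the covering radius of $L$ and which lies deep inside the orthant must contain a point of $L\subseteq\pi(\m_0)$. This produces $t\in\m_0$ with $(w(c+t))_{w\mid\m_\infty}\in O$, i.e.\ the required signs, completing the proof. I expect this orthant-meets-lattice argument to be the main obstacle, everything preceding it being routine.
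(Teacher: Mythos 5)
Your proof is correct, but it takes a genuinely different route from the paper at the crucial step. The paper reduces surjectivity to the same issue of prescribing signs within a coset of $\m_0$, but it corrects \emph{multiplicatively}: citing \cite[Proposition~2.2(i)]{Nar}, which says that the coset $1+\m_0$ contains elements of every signature, it picks $c\in 1+\m_0$ so that $(\textup{sign}(w(bc)))_{w\mid\m_\infty}=\epsilon$ and uses $bc$, which leaves the residue $b+\m_0$ untouched. You instead correct \emph{additively} ($a=c+t$ with $t\in\m_0$), and rather than citing the approximation result you prove it: your orthant-meets-lattice argument (the projection of $\m_0$ to the coordinates at $w\mid\m_\infty$ contains a full-rank lattice, whose covering radius is beaten by balls lying deep inside the translated orthant) is a correct, self-contained geometry-of-numbers proof of precisely the fact the paper outsources to Narkiewicz. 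What your approach buys is self-containedness and a slightly more general statement (signs can be prescribed in any coset $c+\m_0$, not just $1+\m_0$); what the paper's buys is brevity. One pedantic caveat: your assertion that the lift $c$ lies in $R_\m$ needs $c\neq 0$, which can fail only in the degenerate case $\m_0=R$ with $\m_\infty$ trivial, where $R/\m_0$ is the zero ring and $0$ is a legitimate lift; there $(R/\m)^*$ is the trivial group and one simply takes $a=1$. In every other case either $\bar c$ being a unit modulo a proper ideal forces $c\neq 0$, or the sign condition forces $a\neq 0$, as you note, so this does not affect the substance of the argument.
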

\begin{proof}
It is easy to see that $[ab]_\m=[a]_\m[b]_\m$ for all $a,b\in R_\m$. Let $(\epsilon,b+\m_0)\in (R/\m)^*$. By \cite[Proposition~2.2(i)]{Nar}, the coset $1+\m_0$ contains (infinitely many) elements of any given signature. Thus, we can find $c\in 1+\m_0$ such that $(\textup{sign}(w(bc)))_{w\mid\m_\infty}=\epsilon$. Since $bc\in R_\m$, and $bc+\m_0=b+\m_0$, we have $[bc]_\m=(\epsilon,b+\m_0)$.
\end{proof}

Let $K_\m:=\{a\in K^\times : v_\p(a)=0 \text{ for all } \p\mid \m_0\}$ be the (multiplicative) subgroup of $K^\times$ consisting of non-zero elements of $K$ whose corresponding principal fractional ideal is coprime to $\m_0$.

\begin{lemma}\label{lem:quotientgp}
The group of (left) quotients $R_\m^{-1}R_\m:=\{a/b : a,b\in R_\m\}$ of $R_\m$ in $K^\times$ coincides with $K_\m$. Therefore, the semigroup homomorphism $R_\m\to (R/\m)^*$ given by $a\mapsto [a]_\m$ has a unique extension to a (surjective) group homomorphism $K_\m\to (R/\m)^*$, which we denote by $x\mapsto [x]_\m$. 
\end{lemma}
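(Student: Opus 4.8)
The plan is to prove the two assertions in sequence: first that $R_\m^{-1}R_\m = K_\m$, and then deduce the extension of the residue homomorphism as an immediate consequence.

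For the equality $R_\m^{-1}R_\m = K_\m$, I would argue by double inclusion. The inclusion $R_\m^{-1}R_\m \subseteq K_\m$ is the routine direction: for $a,b \in R_\m$ we have $v_\p(a)=v_\p(b)=0$ for all $\p \mid \m_0$, hence $v_\p(a/b) = v_\p(a) - v_\p(b) = 0$ for all such $\p$, so $a/b \in K_\m$. I would also note that $R_\m^{-1}R_\m$ is genuinely a group: since $R_\m$ is a commutative cancellative semigroup sitting inside the group $K^\times$, its set of quotients forms a subgroup of $K^\times$. The substantive direction is $K_\m \subseteq R_\m^{-1}R_\m$. Given $x \in K_\m$, write $x = \alpha/\beta$ with $\alpha,\beta \in R^\times$ (clearing denominators). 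The obstruction is that $\alpha$ and $\beta$ need not lie in $R_\m$ individually: they may share prime factors dividing $\m_0$.

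The key step is therefore to show that any $x \in K_\m$ can be re-expressed as a quotient of two elements of $R_\m$, i.e.\ of two integers coprime to $\m_0$. The natural tool is the surjectivity already established in Lemma~\ref{prop:onto}, combined with a coprimality/approximation argument. Concretely, I would produce an element $c \in R_\m$ (so $v_\p(c)=0$ for $\p\mid\m_0$) such that both $xc$ and $c$ are honest algebraic integers lying in $R_\m$; then $x = (xc)/c$ exhibits $x$ as an element of $R_\m^{-1}R_\m$. To find such $c$, observe that since $v_\p(x)=0$ for all $\p\mid\m_0$, the "bad" primes (those where $x$ fails to be integral) are all coprime to $\m_0$. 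Using the Chinese Remainder Theorem or weak approximation at the finitely many primes dividing the denominator of $x$ together with $\m_0$, one selects $c$ that clears the denominator of $x$ while remaining a unit at every prime dividing $\m_0$; the surjectivity from Lemma~\ref{prop:onto} (or rather the density statement underlying it, $1+\m_0$ meeting every signature class and avoiding prescribed primes) guarantees the existence of such a $c$ in $R_\m$. I expect this CRT/approximation step to be the main obstacle, since it requires simultaneously arranging integrality at the bad primes and coprimality to $\m_0$.

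For the final assertion, once $R_\m^{-1}R_\m = K_\m$ is established, the extension is formal. The residue map $R_\m \to (R/\m)^*$ of Lemma~\ref{prop:onto} is a homomorphism from a cancellative semigroup into a group; by the universal property of the group of quotients, it extends uniquely to a group homomorphism on $R_\m^{-1}R_\m = K_\m$ via $[a/b]_\m := [a]_\m [b]_\m^{-1}$. One checks well-definedness (independence of the representation $a/b$) using the homomorphism property and cancellation in the group $(R/\m)^*$, and surjectivity is inherited from the surjectivity in Lemma~\ref{prop:onto}. Uniqueness follows because $R_\m$ generates $K_\m$ as a group, so any extension is forced on all of $K_\m$.
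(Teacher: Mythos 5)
Your proof is correct, but your route to the key inclusion $K_\m\subseteq R_\m^{-1}R_\m$ is genuinely different from the paper's. The paper argues at the level of ideals: for $x\in K_\m$ it writes the fractional ideal $xR=\a/\b$ with $\a,\b$ integral and coprime to $\m_0$, observes that $\a$ and $\b$ lie in the same class $\k\in\Cl(K)$, chooses an auxiliary integral ideal $\c\in\k^{-1}$ coprime to $\m_0$, and principalizes both at once: $\a\c=aR$ and $\b\c=bR$ with $a,b\in R_\m$, whence $x=au/b$ for some unit $u\in R^*$. You instead work at the level of elements, using CRT/weak approximation to build a single denominator-clearing element: since every prime where $x$ fails to be integral is coprime to $\m_0$, one can pick $c\in R$ with $v_\p(c)=-v_\p(x)$ at those finitely many primes and $c\equiv 1 \bmod \m_0$, so that $c$ and $xc$ both lie in $R_\m$ and $x=(xc)/c$. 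Both arguments are sound; yours is more elementary in that it avoids any input from the ideal class group (it is essentially the same technique the paper itself uses later in Lemma~\ref{lem:approx}), while the paper's is shorter modulo the standard fact that every ideal class contains an integral ideal coprime to a fixed ideal. One small citation slip on your side: Lemma~\ref{prop:onto} (surjectivity onto $(R/\m)^*$) is not what guarantees the existence of $c$ --- plain CRT at the finitely many relevant primes does, and no signature considerations are needed since membership in $R_\m$ imposes no positivity conditions; since you also name CRT explicitly, this is a misattribution rather than a gap. The final extension step (universal property of the group of quotients, well-definedness of $[a/b]_\m=[a]_\m[b]_\m^{-1}$, uniqueness because $R_\m$ generates $K_\m$) matches what the paper calls ``a standard argument.''
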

\begin{proof}
Clearly, $R_\m^{-1} R_\m\subseteq K_\m$. Let $x\in K_\m$. Then $x R=\a/\b$ with $\a$ and $\b$ integral ideals coprime to $\m_0$, and $\a$ and $\b$ represent the same class $\k$ in $\Cl(K)$. Choose an integral ideal $\c$ in $\k^{-1}$ such that $\c$ is coprime to $\m_0$. Then there are $a,b\in R_\m$ such that $\a\c=aR$ and $\b\c=bR$. Now, $x R=\a/\b=\a\c/\b\c=aR/bR$, so that $x=au/b$ for some $u\in R^*$, which shows the reverse inclusion.

If $x\in K_\m$, then by Lemma~\ref{lem:quotientgp}, we can write $x=a/b$ with $a,b\in R_\m$, and $[x]_\m$ is given by $[x]_\m=[a]_\m[b]_\m^{-1}$. A standard argument shows that this gives a well-defined group homomorphism.
\end{proof}

Moduli play a central role in the ideal-theoretic formulation of class field theory, see \cite[Chapter~V]{MilCFT}.
Let $\I_\m$ denote the group of fractional ideals of $K$ that are coprime to $\m_0$, and let $i:K_\m\to \I_\m$ be the canonical homomorphism given by $a\mapsto aR$. Let $K_{\m,1}:=\{x\in K_\m : [x]_\m=1\}$, so that $K_\m/K_{\m,1}\cong (R/\m)^*$. The group $K_{\m,1}$ is called the \emph{ray modulo $\m$}, and the group $\Cl_\m(K):=\I_\m/ i(K_{\m,1})$ is the \emph{ray class group modulo $\m$}. 
Let $R_{\m,1}:=R\cap K_{\m,1}$, let $R^*$ denote the group of units in $R$, and let $R_{\m,1}^*:=R_{\m,1}\cap R^*$ be the group of invertible elements in $R_{\m,1}$. A relationship between ray class groups and the usual ideal class group is demonstrated by the following standard result.

\begin{proposition}[{\cite[Chapter~V,~Theorem~1.7]{MilCFT}}]\label{prop:rayclassgroup}
For every modulus $\m$, there is a five-term exact sequence 
\[
1\to R_{\m,1}^*\to R^*\to (R/\m)^*\to\Cl_\m(K)\to\Cl(K)\to 1.
\]
Hence, $\Cl_\m(K)$ is a finite group of order 
\[
h_\m:=h\cdot [R^*: R_{\m,1}^*]^{-1}\cdot 2^{r_0}\cdot N(\m_0)\prod_{\p\mid\m_0}(1-N(\p)^{-1})
\]
where $h:=|\Cl(K)|$ is the class number of $K$, $r_0$ denotes the number of real embeddings $w$ of $K$ for which $\m(w)=1$, and $N(\p):=|R/\p|$ is the norm of $\p$.
\end{proposition}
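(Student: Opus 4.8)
The plan is to construct each arrow explicitly, verify exactness at every spot by hand, and then read off the order of $\Cl_\m(K)$ from the sequence together with an Euler-function computation. The two outer maps are the obvious ones: $R_{\m,1}^*\hookrightarrow R^*$ is the inclusion, and the final map $\Cl_\m(K)\to\Cl(K)$ is induced by the inclusion of ideal groups $\I_\m\hookrightarrow\I$, equivalently by the quotient $\I_\m/i(K_{\m,1})\to\I_\m/i(K_\m)$ using $i(K_{\m,1})\subseteq i(K_\m)$. The middle map $R^*\to (R/\m)^*$ is $u\mapsto [u]_\m$, the restriction to $R^*\subseteq K_\m$ of the homomorphism from Lemma~\ref{lem:quotientgp}; its kernel is $R^*\cap K_{\m,1}=R_{\m,1}^*$, which gives exactness at $R^*$. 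For the connecting map $(R/\m)^*\to\Cl_\m(K)$ I would use surjectivity of $K_\m\to (R/\m)^*$ (Lemmas~\ref{prop:onto} and~\ref{lem:quotientgp}): given $\alpha\in (R/\m)^*$, choose $x\in K_\m$ with $[x]_\m=\alpha$ and send $\alpha$ to the class of $xR$ in $\Cl_\m(K)$. Well-definedness is immediate, since two such lifts differ by an element of $K_{\m,1}$, whose principal ideal lies in $i(K_{\m,1})$.

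Exactness at $(R/\m)^*$ and at $\Cl_\m(K)$ are then short unwindings. The kernel of $(R/\m)^*\to\Cl_\m(K)$ consists of those $[x]_\m$ with $xR\in i(K_{\m,1})$, i.e.\ $x=uy$ with $u\in R^*$ and $y\in K_{\m,1}$, so $[x]_\m=[u]_\m$ lies in the image of $R^*$; conversely $[u]_\m\mapsto [uR]=[R]$ is trivial. The image of $(R/\m)^*\to\Cl_\m(K)$ is exactly $i(K_\m)/i(K_{\m,1})$, which I then match against the kernel of $\Cl_\m(K)\to\Cl(K)$.

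The main number-theoretic input, and the step I expect to require the most care, is the identification $\Cl(K)\cong \I_\m/i(K_\m)$, which simultaneously yields surjectivity of $\Cl_\m(K)\to\Cl(K)$ and pins down its kernel as $i(K_\m)/i(K_{\m,1})$. Surjectivity of $\I_\m\to\Cl(K)$ amounts to the classical fact that every ideal class has a representative coprime to the fixed ideal $\m_0$, which I would deduce from the approximation/Chinese-remainder properties of the Dedekind domain $R$; this is essentially the same ingredient already used in the proof of Lemma~\ref{lem:quotientgp}. For the kernel I would check $\I_\m\cap i(K^\times)=i(K_\m)$: a principal fractional ideal coprime to $\m_0$ is $xR$ with $v_\p(x)=0$ for all $\p\mid\m_0$, hence $x\in K_\m$.

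Finally, for the order formula I would observe that although $R^*$ and $R_{\m,1}^*$ may be infinite, the cokernel $R^*/R_{\m,1}^*$ is finite (it injects into $(R/\m)^*$), so the sequence collapses to a four-term exact sequence of finite groups $1\to R^*/R_{\m,1}^*\to (R/\m)^*\to\Cl_\m(K)\to\Cl(K)\to 1$. Splitting this at $\Cl_\m(K)$ and multiplying orders yields $|\Cl_\m(K)|=h\cdot [R^*:R_{\m,1}^*]^{-1}\cdot |(R/\m)^*|$. It then remains to compute $|(R/\m)^*|=2^{r_0}\cdot |(R/\m_0)^*|$ directly from the definition of $(R/\m)^*$, and $|(R/\m_0)^*|=N(\m_0)\prod_{\p\mid\m_0}(1-N(\p)^{-1})$ by the Chinese Remainder Theorem together with $|(R/\p^{\m(\p)})^*|=N(\p)^{\m(\p)}(1-N(\p)^{-1})$ and multiplicativity of the ideal norm. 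Substituting gives $h_\m$ exactly.
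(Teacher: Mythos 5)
Your proof is correct, but note that the paper itself offers no proof of this proposition: it is imported wholesale as a standard result, which is why the statement carries the citation \cite[Chapter~V,~Theorem~1.7]{MilCFT} in its header. What you have written out is, in essence, the standard argument behind that cited theorem. All of your steps check: the kernel of $R^*\to(R/\m)^*$ is $R^*\cap K_{\m,1}=R_{\m,1}^*$; the connecting map $(R/\m)^*\to\Cl_\m(K)$ is well defined because two lifts in $K_\m$ of the same residue differ by an element of $K_{\m,1}$; its image is $i(K_\m)/i(K_{\m,1})$, which is precisely the kernel of $\Cl_\m(K)\to\Cl(K)$ once one knows $\I_\m\cap i(K^\times)=i(K_\m)$ and that $\I_\m\to\Cl(K)$ is onto (every ideal class has a representative coprime to $\m_0$ --- as you observe, the same classical fact already used in the proof of Lemma~\ref{lem:quotientgp}); and the counting is sound, since $R^*/R_{\m,1}^*$ embeds in the finite group $(R/\m)^*$, so the five-term sequence collapses to an exact sequence of finite groups and yields $h_\m=h\cdot[R^*:R_{\m,1}^*]^{-1}\cdot|(R/\m)^*|$, with $|(R/\m)^*|=2^{r_0}\,N(\m_0)\prod_{\p\mid\m_0}(1-N(\p)^{-1})$ by the Chinese Remainder Theorem and multiplicativity of the norm. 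The one thing your write-up buys beyond the paper is self-containedness: it uses exactly the tools developed in Section~\ref{subsec:moduli} (Lemmas~\ref{prop:onto} and~\ref{lem:quotientgp}), whereas the paper treats the proposition as a known black box.
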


\section{Semigroups defined by actions of congruence monoids\\ on rings of algebraic integers}\label{sec:construction}

Let $K$ be a number field with ring of integers $R$, and fix a modulus $\m$ for $K$. For each subgroup $\Gamma$ of $(R/\m)^*$, let 
\[
 R_{\m,\Gamma}:=\{a\in  R_\m : [a]_\m\in\Gamma \}.
\]
Clearly $R_{\m,\Gamma}$ is a subsemigroup of $R_\m$ containing the semigroup $R_{\m,1}= R_{\m,\{1\}}$. For $\Gamma=(R/\m)^*$, we have $R_{\m,\Gamma}=R_\m$. 

\begin{remark}
Semigroups of the form $ R_{\m,\Gamma}$ are called \emph{congruence monoids}, see \cite[Definition~5]{HK} and \cite{G-HK}.
\end{remark}

\begin{proposition}\label{prop:gpofquotients} 
Let $K_{\m,\Gamma}:=\{x\in K_\m : [x]_\m\in\Gamma\}$. Then $K_{\m,\Gamma}=R_{\m,\Gamma}^{-1} R_{\m,\Gamma}$ where $R_{\m,\Gamma}^{-1} R_{\m,\Gamma}$ is the group of (left) quotients of $R_{\m,\Gamma}$ in $K_\m$.
\end{proposition}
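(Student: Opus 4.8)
The plan is to prove the two inclusions separately, with essentially all the content lying in $K_{\m,\Gamma}\subseteq R_{\m,\Gamma}^{-1}R_{\m,\Gamma}$. Before starting, I would record two structural observations that make the statement sensible. First, since $x\mapsto [x]_\m$ is a group homomorphism $K_\m\to (R/\m)^*$ by Lemma~\ref{lem:quotientgp} and $\Gamma$ is a subgroup of $(R/\m)^*$, the set $K_{\m,\Gamma}$ is the preimage of $\Gamma$ under this homomorphism, hence a subgroup of $K_\m$. Second, $R_{\m,\Gamma}$ is a cancellative commutative semigroup sitting inside the abelian group $K^\times$, so its set of left quotients $R_{\m,\Gamma}^{-1}R_{\m,\Gamma}=\{a/b : a,b\in R_{\m,\Gamma}\}$ is automatically a subgroup of $K^\times$; in particular there is no Ore-type condition that needs separate verification.

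For the inclusion $R_{\m,\Gamma}^{-1}R_{\m,\Gamma}\subseteq K_{\m,\Gamma}$, I would take $a,b\in R_{\m,\Gamma}\subseteq R_\m$ and invoke Lemma~\ref{lem:quotientgp} (which gives $R_\m^{-1}R_\m=K_\m$) to conclude $a/b\in K_\m$. Then $[a/b]_\m=[a]_\m[b]_\m^{-1}\in\Gamma$, since $[a]_\m,[b]_\m\in\Gamma$ and $\Gamma$ is a group, so $a/b\in K_{\m,\Gamma}$.

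The reverse inclusion is where the argument has content. Given $x\in K_{\m,\Gamma}$, Lemma~\ref{lem:quotientgp} lets me write $x=a/b$ with $a,b\in R_\m$, but there is no reason for $a$ and $b$ individually to lie in $R_{\m,\Gamma}$; only their ratio is controlled. The key device is to adjust the numerator and denominator by a common factor without changing the quotient. Using the surjectivity of $R_\m\to (R/\m)^*$ from Lemma~\ref{prop:onto}, I would choose $c\in R_\m$ with $[c]_\m=[a]_\m^{-1}$. Then $[ac]_\m=[a]_\m[c]_\m=1\in\Gamma$, while $[bc]_\m=[b]_\m[a]_\m^{-1}=[x]_\m^{-1}\in\Gamma$ because $[x]_\m=[a]_\m[b]_\m^{-1}\in\Gamma$ and $\Gamma$ is a subgroup. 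Since $R_\m$ is closed under multiplication, $ac,bc\in R_\m$, and the residue computations show $ac,bc\in R_{\m,\Gamma}$; thus $x=a/b=(ac)/(bc)\in R_{\m,\Gamma}^{-1}R_{\m,\Gamma}$.

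The only real obstacle is spotting this simultaneous-adjustment trick: naively clearing denominators produces elements of $R_\m$ whose residues need not land in $\Gamma$, and the resolution is to exploit surjectivity of the residue map to prescribe $[c]_\m$ so that both $[ac]_\m$ and $[bc]_\m$ are forced into $\Gamma$. Everything else is routine bookkeeping with the homomorphism property of $[\cdot]_\m$ and the fact that $\Gamma$ is a subgroup.
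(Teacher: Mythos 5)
Your proof is correct and follows essentially the same route as the paper's: write $x=a/b$ with $a,b\in R_\m$ via Lemma~\ref{lem:quotientgp}, then use surjectivity of the residue map (Lemma~\ref{prop:onto}) to pick $c\in R_\m$ with $[c]_\m=[a]_\m^{-1}$, so that both $[ac]_\m$ and $[bc]_\m$ land in $\Gamma$. Your computation $[bc]_\m=[x]_\m^{-1}$ is just a streamlined version of the paper's bookkeeping with $\gamma=[x]_\m$, and your additional remarks (that $K_{\m,\Gamma}$ is a subgroup and that no Ore condition is needed) are accurate.
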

\begin{proof}
Clearly, $R_{\m,\Gamma}^{-1} R_{\m,\Gamma}\subseteq K_{\m,\Gamma}$. Let $x\in K_{\m,\Gamma}$. Using Lemma~\ref{lem:quotientgp}, we can write $x=a/b$ with $a,b\in  R_\m$. Since $[x]_\m=[a]_\m[b]_\m^{-1}\in \Gamma$, there exists $\gamma\in\Gamma$ such that $[a]_\m=[b]_\m \gamma$. By Proposition~\ref{prop:onto}, there exists $c\in R_\m$ such that $[c]_\m=[a]_\m^{-1}$. Now, $[ac]_\m=[a]_\m[c]_\m=[1]_\m$ is in $\Gamma$, and $[bc]_\m=([a]_\m \gamma^{-1})[c]_\m=\gamma^{-1}$ is also in $\Gamma$, so we have that $x=a/b=ac/bc$ is in $R_{\m,\Gamma}^{-1} R_{\m,\Gamma}$. 
\end{proof}

The semigroup $R_{\m,\Gamma}$ acts on (the additive group of) $R$ by multiplication, and we form the semi-direct product $R\rtimes R_{\m,\Gamma}$. Explicitly, $R\rtimes R_{\m,\Gamma}$ consists of pairs $(b,a)$ with $b\in R$ and $a\in R_{\m,\Gamma}$, and the product of two such pairs is $(b,a)(d,c):=(b+ad,ac)$. Our first observation about $R\rtimes R_{\m,\Gamma}$ is the following.

\begin{proposition}\label{prop:ore}
The semigroup $R\rtimes R_{\m,\Gamma}$ is left Ore with enveloping group $(R_\m^{-1}R)\rtimes K_{\m,\Gamma}$ where $R_\m^{-1}R=\{\frac{a}{b}\in K : a\in R, b\in R_\m\}$ denotes the localization of the ring $R$ at $R_\m$. That is, the set of left quotients $(R\rtimes R_{\m,\Gamma})^{-1}(R\rtimes R_{\m,\Gamma})$ taken inside $K\rtimes K^\times$ coincides with the group $(R_\m^{-1}R)\rtimes K_{\m,\Gamma}$.
\end{proposition}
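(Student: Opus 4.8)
The plan is to verify the two ingredients that together yield the statement: that $P:=R\rtimes R_{\m,\Gamma}$ is cancellative and satisfies the left Ore condition, so that the group of left quotients $P^{-1}P=\{p^{-1}q:p,q\in P\}$ exists and is realized inside $K\rtimes K^\times$ as the subgroup generated by $P$; and then to compute this group explicitly. Cancellativity is immediate, since $P$ is a subsemigroup of the group $K\rtimes K^\times$ via $R\hookrightarrow K$ and $R_{\m,\Gamma}\hookrightarrow K^\times$. For the left Ore condition I would show that any two elements $(b,a),(d,c)\in P$ admit a common left multiple. Because the multiplicative coordinate lives in the \emph{commutative} monoid $R_{\m,\Gamma}$, the product $ac=ca$ supplies a common multiple there, and because the additive coordinate ranges over the whole ring $R$ there is no obstruction to matching first coordinates. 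Concretely I would exhibit $(ad,c),(cb,a)\in P$ and check directly that $(ad,c)(b,a)=(cb,a)(d,c)=(ad+cb,ac)$, which is a one-line computation using $ac=ca$; this shows $P(b,a)\cap P(d,c)\neq\emptyset$.

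It then remains to identify $P^{-1}P$ with $(R_\m^{-1}R)\rtimes K_{\m,\Gamma}$. Using $(b,a)^{-1}=(-a^{-1}b,a^{-1})$ I would first record the normal form $(b,a)^{-1}(d,c)=(a^{-1}(d-b),\,a^{-1}c)$. For the inclusion $P^{-1}P\subseteq(R_\m^{-1}R)\rtimes K_{\m,\Gamma}$ this is now transparent: the multiplicative coordinate $a^{-1}c$ lies in $R_{\m,\Gamma}^{-1}R_{\m,\Gamma}=K_{\m,\Gamma}$ by Proposition~\ref{prop:gpofquotients}, while the additive coordinate $a^{-1}(d-b)$ lies in $R_\m^{-1}R$ since $a\in R_{\m,\Gamma}\subseteq R_\m$ and $d-b\in R$. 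Along the way I would note that $K_{\m,\Gamma}$ preserves $R_\m^{-1}R$ under multiplication, so the right-hand side is a genuine subgroup of $K\rtimes K^\times$.

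The substantive direction, and the main obstacle, is the reverse inclusion: given $(y,s)$ with $s\in K_{\m,\Gamma}$ and $y\in R_\m^{-1}R$, I must write it as $(b,a)^{-1}(d,c)$ with $a,c\in R_{\m,\Gamma}$ and $b,d\in R$. By Proposition~\ref{prop:gpofquotients} I can write $s=\alpha/\beta$ with $\alpha,\beta\in R_{\m,\Gamma}$, and then $a=\beta a''$, $c=\alpha a''$ represent $s$ for any $a''\in R_{\m,\Gamma}$; the freedom in $a''$ is what I will exploit. Writing $y=r/u$ with $r\in R$ and $u\in R_\m$, the requirement $a^{-1}(d-b)=y$ reduces to choosing $a$ with $ay\in R$, after which $b,d\in R$ may be chosen freely (e.g. $b=0$). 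The difficulty is to clear the denominator $u$ while keeping \emph{both} $a$ and $c$ inside the congruence monoid $R_{\m,\Gamma}$, and not merely inside $R_\m$; this is precisely where the surjectivity of $R_\m\to(R/\m)^*$ from Lemma~\ref{prop:onto} is needed. I would pick $v\in R_\m$ with $[v]_\m=[u]_\m^{-1}$, so that $[uv]_\m=1$ and hence $uv\in R_{\m,\Gamma}$, and set $a''=uv$. Then $a=\beta uv$ and $c=\alpha uv$ both lie in $R_{\m,\Gamma}$ and represent $s$, while $ay=\beta vr\in R$; taking $b=0$ and $d=\beta vr$ gives $(b,a)^{-1}(d,c)=(y,s)$, completing both inclusions and the proof.

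I expect the Ore verification and the forward inclusion to be routine, with all the genuine content concentrated in the denominator-clearing step of the reverse inclusion, whose success hinges on the surjectivity of the residue map rather than on any property of $\Gamma$ beyond its being a subgroup containing the identity.
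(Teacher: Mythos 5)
Your proposal is correct, and every step checks out: the common-left-multiple computation $(ad,c)(b,a)=(cb,a)(d,c)=(ad+cb,ac)$ is valid, the forward inclusion is the same normal-form computation as in the paper, and the denominator-clearing construction $a=\beta uv$, $c=\alpha uv$, $d=\beta vr$ with $[v]_\m=[u]_\m^{-1}$ does produce $(0,a)^{-1}(d,c)=(y,s)$ with $a,c\in R_{\m,\Gamma}$. The route differs from the paper's in two structural ways. First, the paper never verifies the Ore condition explicitly; it asserts that ``a direct calculation shows'' the set of left quotients is a group, and then exploits this group structure: it decomposes the target as $(R_\m^{-1}R\rtimes\{1\})(\{0\}\rtimes K_{\m,\Gamma})$ and only checks that each of the two factor subgroups lies in $(R\rtimes R_{\m,\Gamma})^{-1}(R\rtimes R_{\m,\Gamma})$ --- the subgroup $\{0\}\rtimes K_{\m,\Gamma}$ via quotients with $b=d=0$ and Proposition~\ref{prop:gpofquotients}, and $R_\m^{-1}R\rtimes\{1\}$ via quotients with $a=c$ plus the identity $R_{\m,\Gamma}^{-1}R=R_\m^{-1}R$. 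Second, your reverse inclusion instead exhibits an \emph{arbitrary} element of $(R_\m^{-1}R)\rtimes K_{\m,\Gamma}$ as a \emph{single} quotient $(b,a)^{-1}(d,c)$, so it does not rely on closure of the quotient set under products at all; combined with your explicit Ore verification, this makes the argument more self-contained than the paper's (which leaves both the group property and the Ore condition implicit). The essential number-theoretic content is identical in the two proofs: both hinge on the surjectivity of $R_\m\to(R/\m)^*$ (Lemma~\ref{prop:onto}) to replace a denominator in $R_\m$ by one in $R_{\m,1}\subseteq R_{\m,\Gamma}$, exactly as the paper does when proving $R_\m^{-1}R\subseteq R_{\m,\Gamma}^{-1}R$. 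What the paper's decomposition buys is brevity; what your direct construction buys is that the Ore condition and the identification of the enveloping group are each established by a fully explicit computation, with no appeal to the unproved claim that the quotient set is a group.
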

\begin{proof}
For $(b,a),(d,c)\in R\rtimes R_{\m,\Gamma}$, we have 
\begin{equation}\label{eqn:quotients}
(b,a)^{-1}(d,c)=(-ba^{-1},a^{-1})(d,c)=\big(\frac{d-b}{a},\frac{c}{a}\big).
\end{equation} 
Hence, $(R\rtimes R_{\m,\Gamma})^{-1}(R\rtimes R_{\m,\Gamma})$ lies in $(R_\m^{-1}R)\rtimes K_{\m,\Gamma}$. 
A direct calculation shows that $(R\rtimes R_{\m,\Gamma})^{-1}(R\rtimes R_{\m,\Gamma})$ is a group. Since $(R_\m^{-1}R)\rtimes K_{\m,\Gamma}=(R_\m^{-1}R\rtimes\{1\})(\{0\}\rtimes K_{\m,\Gamma})$, we will be done once we show that $(R\rtimes R_{\m,\Gamma})^{-1}(R\rtimes R_{\m,\Gamma})$ contains the subgroups $R_\m^{-1}R\rtimes\{1\}$ and $\{0\}\rtimes K_{\m,\Gamma}$

By considering all products in \eqref{eqn:quotients} with $b=d=0$ and using Proposition~\ref{prop:gpofquotients}, we see that $\{0\}\rtimes K_{\m,\Gamma}$ is contained in $(R\rtimes R_\m)^{-1}(R\rtimes R_{\m,\Gamma})$, and by considering all products in \eqref{eqn:quotients} with $a=c$, we see that $(R_{\m,\Gamma}^{-1}R)\rtimes\{1\}$ is contained in $(R\rtimes R_{\m,\Gamma})^{-1}(R\rtimes R_{\m,\Gamma})$. It remains to show that $R_{\m,\Gamma}^{-1}R$ coincides with $R_\m^{-1}R$. The inclusion $R_{\m,\Gamma}^{-1}R\subseteq R_\m^{-1}R$ is easy to see. Now suppose that $a\in R$ and $b\in R_\m$. By Lemma~\ref{prop:onto}, there is a $c\in R_\m$ such that $[c]_\m=[b]_\m^{-1}$, that is, $w(bc)>0$ for all $w\mid\m_\infty$ and $bc\in 1+\m_0$, so that $bc\in R_{\m,1}$. Now $a/b=ac/bc$ lies in $R_{\m,\Gamma}^{-1}R$, so $R_\m^{-1}R\subseteq R_{\m,\Gamma}^{-1}R$.
\end{proof}

We now turn to the problem of computing the semilattice $\J_{R\rtimes R_{\m,\Gamma}}$ of constructible right ideals in $R\rtimes R_{\m,\Gamma}$. Recall that $\I_\m$ is, by definition, the group of fractional ideals of $K$ that are coprime to $\m_0$. Let $\I_\m^+$ be the submonoid of $\I_\m$ consisting of (non-zero) integral ideals that are coprime to $\m_0$. For $\a\in\I_\m$, we set $\a^\times:=\a\setminus\{0\}$. When $\m=\m_0=R$, we will write $\I$ instead of $\I_R$.

Our goal now is to prove the following result, which generalizes the computation of $\J_{R\rtimes R^\times}$ from \cite[Section~2.4]{Li2}.

\begin{proposition}\label{prop:CI}
The set $\left(\bigsqcup_{\a\in\I_\m^+}R/\a\right)\sqcup\{\emptyset\}$ is a semilattice with respect to intersections. For each $x\in R$ and $\a\in\I_\m^+$, the set $(x+\a)\times(\a\cap  R_{\m,\Gamma})$ is a constructible right ideal of $R\rtimes R_{\m,\Gamma}$, and the map
\[
\left(\bigsqcup_{\a\in\I_\m^+}R/\a\right)\sqcup\{\emptyset\} \to\J_{ R\rtimes  R_{\m,\Gamma}}
\] 
given by $x+\a\mapsto (x+\a)\times(\a\cap  R_{\m,\Gamma})$ and $\emptyset\mapsto \emptyset$ is an isomorphism of semilattices. Moreover, $\J_{R\rtimes  R_{\m,\Gamma}}$ is independent.
\end{proposition}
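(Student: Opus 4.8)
The plan is to prove the equality $\J_{R\rtimes R_{\m,\Gamma}}=\mathcal{F}$, where
\[
\mathcal{F}:=\{(x+\a)\times(\a\cap R_{\m,\Gamma}) : x\in R,\ \a\in\I_\m^+\}\cup\{\emptyset\},
\]
by a double inclusion, and then to read off both the semilattice isomorphism and independence. Writing $P:=R\rtimes R_{\m,\Gamma}$, the inclusion $\J_P\subseteq\mathcal{F}$ amounts to checking that $\mathcal{F}$ contains $\emptyset$ and $P$ (take $\a=R$, $x=0$) and is closed under the three operations defining $\J_P$; since $\J_P$ is the smallest such family, this suffices. These are direct semidirect-product computations: for $p=(b,a')$ one gets $p\bigl[(x+\a)\times(\a\cap R_{\m,\Gamma})\bigr]=(b+a'x+a'\a)\times\bigl((a'\a)\cap R_{\m,\Gamma}\bigr)$, while $p^{-1}\bigl((x+\a)\times(\a\cap R_{\m,\Gamma})\bigr)$ factors as a product of the solution set in $R$ of $a'u\in(x-b)+\a$ (empty, or a coset of $(a')^{-1}\a\cap R$) with $(a')^{-1}\a\cap R\cap R_{\m,\Gamma}$, and an intersection of two members collapses, via the coset form of the Chinese Remainder Theorem, to $\bigl[(x+\a)\cap(x'+\a')\bigr]\times\bigl((\a\cap\a')\cap R_{\m,\Gamma}\bigr)$. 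Two small facts make these land back in $\mathcal{F}$: the ideals $a'\a$, $(a')^{-1}\a\cap R$, and $\a\cap\a'$ all lie in $\I_\m^+$, and $a'(\a\cap R_{\m,\Gamma})=(a'\a)\cap R_{\m,\Gamma}$ for $a'\in R_{\m,\Gamma}$, which I would verify using $[a']_\m\in\Gamma$.

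For the reverse inclusion $\mathcal{F}\subseteq\J_P$, the principal right ideal $(x,a)P$ equals $(x+aR)\times(aR\cap R_{\m,\Gamma})$, which disposes of all principal $\a=aR$ with $a\in R_{\m,\Gamma}$; an arbitrary coset is then obtained by applying $(x,1)\in P$ on the left. The crux is realizing a non-principal $\a\in\I_\m^+$, and here I would use that $p^{-1}(\,\cdot\,)$ produces the integral part $(a')^{-1}(bR)\cap R$ of a principal fractional ideal, which need not be principal. Concretely, I would choose $b,a'\in R_{\m,\Gamma}$ with $v_\p(b)=v_\p(\a)$ for $\p\mid\a$ and with the prime-to-$\a$ part of $a'$ matching that of $b$, so that a short valuation check gives $(a')^{-1}(bR)\cap R=\a$ and hence $p^{-1}\bigl((0,b)P\bigr)=\a\times(\a\cap R_{\m,\Gamma})$ for $p=(0,a')$. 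I expect this to be the main obstacle: producing $b$ and $a'$ with prescribed valuations at finitely many primes \emph{and} with $[b]_\m,[a']_\m\in\Gamma$ is precisely where the congruence-monoid structure is used, and I would carry it out by weak approximation together with the sign-control statement \cite[Proposition~2.2(i)]{Nar} already invoked in Lemma~\ref{prop:onto}.

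Once $\J_P=\mathcal{F}$ is established, the displayed map $\Phi$ is surjective by construction and preserves intersections by the computation above, so it is a homomorphism of semilattices (and the domain is visibly closed under coset intersection, hence is itself a semilattice). For injectivity I must know that every fibre is nonempty and remembers its ideal, i.e.\ that $\a\cap R_{\m,\Gamma}\neq\emptyset$ for each $\a\in\I_\m^+$. This follows by repeating the proof of Lemma~\ref{prop:onto} inside $\a$: since $\a$ is coprime to $\m_0$ one has $\a+\m_0=R$, so the reduction $\a\to R/\m_0$ is onto, and adjusting signs as in that lemma shows $\a\cap R_\m\to(R/\m)^*$ is surjective and in particular meets $\Gamma$. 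Given nonemptiness, the projection to the first coordinate recovers $x+\a$, and a coset determines its ideal, which yields injectivity.

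For independence, suppose $X=\bigcup_{i=1}^m X_i$ in $\J_P$; replacing $X_i$ by $X_i\cap X$ I may assume each $X_i\subseteq X$, so with $X=(x+\a)\times(\a\cap R_{\m,\Gamma})$ and $X_i=(x_i+\a_i)\times(\a_i\cap R_{\m,\Gamma})$ one has $\a_i\subseteq\a$. The idea is to produce a single element $a^*\in\a\cap R_{\m,\Gamma}$ lying in no $\a_i$ with $\a_i\subsetneq\a$: by weak approximation I would pick $a^*\in\a$ with $v_\p(a^*)=v_\p(\a)$ at every prime dividing $\a$, $\m_0$, or one of the proper $\a_i$, and with $[a^*]_\m=1$. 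Evaluating the cover on the slice through $a^*$ gives $x+\a=\bigcup_{\{i\,:\,a^*\in\a_i\}}(x_i+\a_i)$; the left-hand side is nonempty, so the index set is nonempty, and every $i$ in it satisfies $\a_i=\a$ and therefore $x_i+\a=x+\a$, i.e.\ $X_i=X$. As in the earlier steps, the essential ingredient is the approximation argument furnishing $a^*$ with prescribed valuations and residue in $\Gamma$, so the surjectivity of the residue map from Lemma~\ref{prop:onto} is the tool I would lean on.
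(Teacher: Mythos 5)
Your proof is correct, and its skeleton matches the paper's: a double inclusion using the minimality characterization of $\J_{R\rtimes R_{\m,\Gamma}}$, the realization of a non-principal $\a\in\I_\m^+$ as the integral part of a quotient of two congruence-monoid elements (your valuation-matching choice of $b,a'$ is exactly what the paper packages as Lemmas~\ref{lem:twoGen}~and~\ref{lem:cutdown}, namely $\a=\frac{a}{b}R\cap R$ with $a,b\in R_{\m,1}$), and the same key number-theoretic input --- an element of $R_{\m,1}$ with prescribed valuations at finitely many primes away from $\m_0$, which is the paper's Lemma~\ref{lem:approx}, proved there by CRT plus the sign-control statement you cite. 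Where you genuinely diverge is in injectivity and independence. The paper first proves the purely multiplicative statement (Proposition~\ref{prop:constructibleIdeals}); there, with no additive coordinate available, it must invoke Lemma~\ref{lem:determinedbyray} (each $\a\in\I_\m^+$ is generated as an ideal by $\a\cap R_{\m,\Gamma}$) both to see that $\a\cap R_{\m,\Gamma}=\b\cap R_{\m,\Gamma}$ forces $\a=\b$ and to get $\a_i\subseteq\a$ in the independence argument; Proposition~\ref{prop:CI} is then deduced from this, with independence obtained by projecting onto the second coordinate. You instead work directly in $R\rtimes R_{\m,\Gamma}$ and read everything off the first (additive) coordinate: a coset determines its ideal, so injectivity needs only nonemptiness of $\a\cap R_{\m,\Gamma}$ (which you supply by a residue-surjectivity argument in the spirit of Lemma~\ref{prop:onto}), and in the independence argument $X_i\subseteq X$ gives $\a_i\subseteq\a$ for free, after which your slice through a well-chosen $a^*\in\a\cap R_{\m,1}$ finishes in one stroke. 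This makes your treatment of Proposition~\ref{prop:CI} more self-contained and avoids Lemma~\ref{lem:determinedbyray} entirely; the trade-off is only organizational, since the paper reuses both Lemma~\ref{lem:determinedbyray} (e.g., in the proof of Proposition~\ref{prop:func1}) and the standalone multiplicative Proposition~\ref{prop:constructibleIdeals} (e.g., for Corollary~\ref{cor:CI} and Proposition~\ref{prop:inclusionofsemilattices}) later, so its longer route is not wasted there.
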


We need several preliminary results before we can prove Proposition~\ref{prop:CI}. They are contained in the following propositions and lemmas, several of which will also be useful later.

Recall that an element $x\in K^\times$ is \emph{totally positive} if $w(x)>0$ for every real embedding $w:K\hookrightarrow \mathbb{R}$. Note that if $K$ has no real embeddings, then every element of $K^\times$ is totally positive.

\begin{lemma}\label{lem:approx}
Let $\p_1,...,\p_k$ be distinct non-zero primes of $ R$ not dividing $\m_0$ and $n_1,...,n_k$ be in $\mathbb{N}$. There is an element $x$ in $R_{\m,1}$ such that $x$ is totally positive and $v_{\p_j}(x)=n_j$ for $j=1,...,k$.
\end{lemma}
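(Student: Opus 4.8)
The plan is to separate the conditions into their non-archimedean and archimedean parts and treat them independently. The finite conditions---prescribed valuations $v_{\p_j}(x)=n_j$ together with the requirement $[x]_\m=1$, which amounts to $x\equiv 1\ (\mathrm{mod}\ \m_0)$---can be solved simultaneously by the Chinese Remainder Theorem, while total positivity is an archimedean condition that I would arrange afterwards by a Minkowski lattice argument, taking care not to disturb the congruences already in place.

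First I would set $\b:=\m_0\prod_{j=1}^k\p_j^{\,n_j+1}$. Since $\p_1,\dots,\p_k$ are distinct and none divides $\m_0$, the ideals $\m_0,\p_1^{n_1+1},\dots,\p_k^{n_k+1}$ are pairwise coprime, so the Chinese Remainder Theorem yields $x_0\in R$ with $x_0\equiv 1\ (\mathrm{mod}\ \m_0)$ and $x_0\equiv y_j\ (\mathrm{mod}\ \p_j^{n_j+1})$, where $y_j\in\p_j^{n_j}\setminus\p_j^{n_j+1}$ is chosen with $v_{\p_j}(y_j)=n_j$. Then $v_{\p_j}(x_0)=n_j$ for each $j$. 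The crucial point is that these properties are stable under adding any element $b\in\b$: one has $x_0+b\equiv 1\ (\mathrm{mod}\ \m_0)$ because $b\in\m_0$, and $v_{\p_j}(x_0+b)=n_j$ because $v_{\p_j}(b)\geq n_j+1>n_j=v_{\p_j}(x_0)$. Thus the entire coset $x_0+\b$ already satisfies all of the finite requirements, and it remains only to locate a totally positive element inside it.

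For the archimedean step I would use the Minkowski embedding of $K$ into $K\otimes_\mathbb{Q}\mathbb{R}\cong\prod_{w\in V_{K,\mathbb{R}}}\mathbb{R}\times\prod_w\mathbb{C}$, under which the nonzero ideal $\b$ becomes a full lattice $L$ of rank $[K:\mathbb{Q}]$. Total positivity of $x_0+b$ means $w(b)>-w(x_0)$ for every real embedding $w$; the set $\Omega$ of points in $K\otimes_\mathbb{Q}\mathbb{R}$ whose real coordinates satisfy these strict lower bounds is an open, full-dimensional region that contains balls of arbitrarily large radius (the complex coordinates being unconstrained). Since $L$ has finite covering radius, $\Omega$ must contain a lattice point, giving $b\in\b$ with $x_0+b$ totally positive; then $x:=x_0+b$ has all the desired properties and lies in $R_{\m,1}$. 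When $K$ has no real embeddings the archimedean condition is vacuous and one may take $x=x_0$.

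The main obstacle is precisely the coordination of the exact valuation conditions with total positivity: one cannot simply rescale by a totally positive unit, since that would destroy the residue modulo $\m_0$, nor appeal to density of the projection of $L$ onto the real coordinates, which fails when $K$ is totally real. The resolution is to perturb $x_0$ only within the ideal $\b$, whose elements have strictly larger valuation at each $\p_j$ and lie in $\m_0$, and to invoke the lattice-meets-cone fact in the uniform form above rather than a density statement. Alternatively, the sign adjustment could be carried out by invoking \cite[Proposition~2.2(i)]{Nar}, exactly as in the proof of Lemma~\ref{prop:onto}, applied within the coset $x_0+\b$.
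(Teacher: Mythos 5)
Your proof is correct and follows essentially the same route as the paper: the identical CRT step producing $x_0\equiv 1\ (\mathrm{mod}\ \m_0)$ with exact valuations $v_{\p_j}(x_0)=n_j$, followed by a perturbation inside the ideal $\b=\m_0\p_1^{n_1+1}\cdots\p_k^{n_k+1}$, which preserves both conditions. The only difference is the archimedean step: where you invoke a Minkowski covering-radius argument to find a lattice point of $\b$ in the shifted positive cone, the paper simply chooses a sufficiently large positive rational integer $T\in\b$ (every nonzero ideal contains one, and positive rational integers are totally positive under the diagonal embedding), which is lighter but achieves exactly the same thing.
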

\begin{proof}
For each $1\leq j \leq k$, let $\pi_{\p_j}\in\p_j\setminus \p_j^2$. By the Chinese Remainder Theorem, there exists $y\in R$ such that 
\begin{enumerate}
	\item $y\;\equiv\; \pi_{\p_j}^{n_j}\;\mod\; \p_j^{n_j+1}$;
	\item $y\;\equiv\; 1 \;\mod\; \m_0$.
\end{enumerate}	
The first condition says that $v_{\p_j}(y)=n_j$ for $1\leq j\leq k$. Choose an integer $T$ in $\m_0\p_1^{n_1+1}\cdots \p_k^{n_k+1}$ such that $x:=y+T$ is totally positive. Since $T\in\m_0\p_j^{n_j+1}=\m_0\cap\p_j^{n_j+1}$ for each $1\leq j\leq k$, $x$ still satisfies $(1)$ and $(2)$, so we are done.
\end{proof}

The following two lemmas are refinements of well-known results for the case of trivial $\m$ (in which case $\Gamma$ must also be trivial), see \cite[Lemma~4.15(a)]{CDL} and \cite[Section~2.4]{Li2}.

\begin{lemma}\label{lem:twoGen}
Let $\a\in \I_\m^+$. For each $a\in \a\cap  R_{\m,1}$, there exists $b\in \a\cap  R_{\m,1}$ such that $\a=a R+b R$.
\end{lemma}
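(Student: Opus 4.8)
The plan is to reduce the two-generator statement to a problem about prescribed valuations and then invoke the approximation Lemma~\ref{lem:approx}, which already packages the total positivity and congruence-mod-$\m_0$ conditions needed to land inside $R_{\m,1}$.

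First I would record what is forced by the hypothesis $a\in\a\cap R_{\m,1}$. Since $a\in\a$, we have $aR\subseteq\a$, so $\a$ divides $aR$; in particular every prime dividing $\a$ divides $a$. And since $a\in R_\m$, the element $a$ is coprime to $\m_0$, so every prime dividing $a$ is coprime to $\m_0$. Let $\p_1,\dots,\p_k$ be the distinct primes dividing $a$ and set $n_j:=v_{\p_j}(\a)$ for $1\le j\le k$. The key observation is that it suffices to produce $b\in\a\cap R_{\m,1}$ with $v_{\p_j}(b)=n_j=v_{\p_j}(\a)$ for every $j$. Indeed, $v_\p(aR+bR)=\min(v_\p(a),v_\p(b))$ for each prime $\p$; at each $\p_j$ this equals $v_{\p_j}(\a)$ because $v_{\p_j}(a)\ge v_{\p_j}(\a)=v_{\p_j}(b)$, while at every prime $\mathfrak{q}$ not among the $\p_j$ we have $v_{\mathfrak{q}}(a)=0$ and hence $v_{\mathfrak{q}}(\a)=0$, so the minimum is $0=v_{\mathfrak{q}}(\a)$. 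Comparing valuations prime-by-prime then gives $aR+bR=\a$.

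Next I would construct $b$ by applying Lemma~\ref{lem:approx} to the primes $\p_1,\dots,\p_k$ (distinct and coprime to $\m_0$, as required) with the exponents $n_1,\dots,n_k$. This produces a totally positive $x\in R_{\m,1}$ with $v_{\p_j}(x)=n_j$ for all $j$. It then remains to check $x\in\a$: at each $\p_j$ we have $v_{\p_j}(x)=n_j=v_{\p_j}(\a)$, and at every other prime $\mathfrak{q}$ we have $v_{\mathfrak{q}}(\a)=0\le v_{\mathfrak{q}}(x)$ since $x\in R$; hence $v_{\mathfrak{q}}(x)\ge v_{\mathfrak{q}}(\a)$ for all $\mathfrak{q}$, i.e.\ $x\in\a$. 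Setting $b:=x$ and invoking the valuation comparison from the previous step finishes the proof. (If $a$ is a unit then $\a=aR=R$ and one may simply take $b=a$; this is exactly the degenerate case $k=0$.)

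I do not expect a serious obstacle. The entire difficulty of staying inside $R_{\m,1}$—total positivity at the real places dividing $\m_\infty$ together with $b\equiv 1\bmod\m_0$—is already absorbed into Lemma~\ref{lem:approx}. The only genuine content is the reduction in the first step: recognizing that controlling the valuations of $b$ \emph{exactly} at the primes dividing $a$ forces $aR+bR=\a$, and that $\a\mid aR$ guarantees these primes already include every prime dividing $\a$, so that the element returned by the approximation lemma automatically lands in $\a$. The only care required is this bookkeeping about which primes to feed into Lemma~\ref{lem:approx}.
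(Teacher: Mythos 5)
Your proof is correct and follows essentially the same route as the paper: both apply Lemma~\ref{lem:approx} to produce $b\in\a\cap R_{\m,1}$ with $v_\p(b)=v_\p(\a)$ at the primes dividing $a$, and then verify $aR+bR=\a$. The only difference is cosmetic: the paper phrases the final verification via the factorizations $aR=\a\c_a$, $bR=\a\c_b$ and coprimality of $\c_a,\c_b$, whereas you compare valuations prime by prime.
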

\begin{proof}
Write $a R=\a\c_a$ for some ideal $\c_a$ of $ R$. Since $a$ is relatively prime to $\m_0$, we have $\c_a\in \I_\m^+$. By Lemma~\ref{lem:approx}, we can find $b\in \a\cap  R_{\m,1}$ such that $v_\p(b)=v_\p(\a)$ for every prime $\p$ dividing $\c_a$. Now write $b R=\a\c_b$ for some ideal $\c_b$ of $ R$. Since $v_\p(\c_b)=v_\p(b)-v_\p(\a)=0$ for all $\p$ dividing $\c_a$, we see that $\c_a$ and $\c_b$ are relatively prime, that is, $ R=\c_a+\c_b$. Thus, $\a=\a R=\a(\c_a+\c_b)=a R+b R$.
\end{proof}

\begin{lemma}\label{lem:cutdown}
Let $\a\in \I_\m^+$. For each $a\in R_{\m,1}$, there exists $b\in  R_{\m,1}$ such that $\a=\frac{a}{b} R\cap R$.
\end{lemma}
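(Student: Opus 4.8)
The plan is to turn the ideal identity into a statement about $\p$-adic valuations and then build $b$ using Lemma~\ref{lem:approx}. The starting point is that for fractional ideals of $K$ intersection is computed primewise by the maximum of valuations, so that for any $a,b\in R_\m$ one has $v_\p\!\left(\tfrac{a}{b}R\cap R\right)=\max\{v_\p(a)-v_\p(b),\,0\}$ at every prime $\p$. Hence the desired identity $\a=\tfrac{a}{b}R\cap R$ is equivalent to the system
\[
\max\{v_\p(a)-v_\p(b),\,0\}=v_\p(\a)\qquad\text{for all }\p\in\P_K.
\]
Reading this off shows precisely what $b$ must do, and it is here that the hypothesis $a\in\a$ enters (which I take as part of the statement, as in Lemma~\ref{lem:twoGen}): at a prime $\p\mid\a$ the equation forces $v_\p(b)=v_\p(a)-v_\p(\a)$, which is $\geq 0$ exactly because $\a\mid aR$.

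First I would record the relevant integral ideal. Since $a\in\a\cap R_{\m,1}$ we have $\a\mid aR$, so $\c_a:=aR\,\a^{-1}$ is an integral ideal with $v_\p(\c_a)=v_\p(a)-v_\p(\a)\geq 0$; because $a$ is coprime to $\m_0$, so is $\c_a$, i.e.\ $\c_a\in\I_\m^+$. The valuations that $b$ must realize are then simply $v_\p(b)=v_\p(\c_a)$, and these differ from $0$ only at the finitely many primes dividing $aR$, all coprime to $\m_0$. I would then invoke Lemma~\ref{lem:approx} to produce a totally positive $b\in R_{\m,1}$ with $v_\p(b)=v_\p(\c_a)$ at each such prime (and automatically $v_\p(b)\geq 0$ elsewhere). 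The verification is a primewise check of the displayed system: for $\p\mid\a$ one gets $v_\p(a)-v_\p(b)=v_\p(\a)>0$; for $\p\mid aR$ with $\p\nmid\a$ one gets $v_\p(a)-v_\p(b)=0=v_\p(\a)$; and for $\p\nmid aR$ one has $v_\p(a)=v_\p(\a)=0$ while $v_\p(b)\geq 0$, so the maximum is again $0$. In every case $\max\{v_\p(a)-v_\p(b),\,0\}=v_\p(\a)$, giving $\a=\tfrac{a}{b}R\cap R$.

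The only genuine obstacle is the valuation bookkeeping that pins down the correct target $v_\p(b)=v_\p(a)-v_\p(\a)$ and recognizes that $a\in\a$ is exactly what makes these nonnegative; once that is in place, the existence of an element of the congruence class $R_{\m,1}$ with prescribed valuations at a finite set of primes away from $\m_0$ is delivered verbatim by Lemma~\ref{lem:approx}, so no further difficulty arises.
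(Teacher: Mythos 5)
Your proof is correct, and you were right to read the hypothesis as $a\in\a\cap R_{\m,1}$: as literally stated (arbitrary $a\in R_{\m,1}$) the lemma fails---take $a=1$ and $\a\neq R$, since $\frac{1}{b}R\cap R=R$ for every $b\in R$---and the paper's own proof makes the same implicit assumption the moment it writes $aR=\a\c_a$ with $\c_a$ an integral ideal. Your route differs from the paper's in its mechanics, though. The paper factors the argument through Lemma~\ref{lem:twoGen}: writing $aR=\a\c_a$, it applies that lemma to $\c_a$ (which contains $a$) to obtain $b\in\c_a\cap R_{\m,1}$ with $\c_a=aR+bR$, and then concludes by pure ideal arithmetic from the identity $abR=(aR+bR)(aR\cap bR)$, namely $\a=aR\,\c_a^{-1}=aR(aR+bR)^{-1}=b^{-1}(aR\cap bR)=\frac{a}{b}R\cap R$. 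You bypass Lemma~\ref{lem:twoGen} entirely and construct $b$ directly from Lemma~\ref{lem:approx}, prescribing $v_\p(b)=v_\p(a)-v_\p(\a)$ at the primes dividing $aR$ and verifying the identity primewise via $v_\p\bigl(\tfrac{a}{b}R\cap R\bigr)=\max\{v_\p(a)-v_\p(b),0\}$. The two constructions are close cousins---Lemma~\ref{lem:twoGen} is itself proved from Lemma~\ref{lem:approx}, and the $b$ it produces obeys the same valuation constraints at the primes dividing $\a$---but your version is more self-contained and makes transparent exactly why $a\in\a$ is indispensable (it is what makes the target valuations of $b$ nonnegative), whereas the paper's version keeps this lemma's proof valuation-free and records the two-generation statement as a separate intermediate result along the way.
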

\begin{proof}
Write $a R=\a\c_a$ for some ideal $\c_a$ of $ R$. Since $a\in \c_a$, Lemma~\ref{lem:twoGen} implies that there is a $b\in \c_a\cap  R_{\m,1}$ such that $\c_a=a R+b R$. Since $ab R=(a R+b R)(a R\cap b R)$, we have $\a=a R(\c_a)^{-1}=a R(a R+b R)^{-1}=b^{-1}(a R\cap b R)=\frac{a}{b} R\cap R$.
\end{proof}

For any set $X\subseteq R$, we denote by $X_+$ the subset of all totally positive elements in $X$, and by $\langle  X\rangle  $ the ideal of $R$ generated by $X$.

\begin{lemma}\label{lem:determinedbyray}
Let $\a\in \I_\m^+$. Then for each subgroup $\Gamma\subseteq (R/\m)^*$, $\a$ is generated as an ideal by the set $\a\cap R_{\m,\Gamma}$. Indeed, $\a$ is generated by the set $(\a\cap(1+\m_0))_+=\a\cap  (1+\m_0)_+$.
\end{lemma}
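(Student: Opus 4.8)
The plan is to deduce the general statement from the final, strongest assertion about $(1+\m_0)_+$. First I would record the inclusions
\[
\a\cap(1+\m_0)_+ \;\subseteq\; \a\cap R_{\m,1} \;\subseteq\; \a\cap R_{\m,\Gamma} \;\subseteq\; \a,
\]
valid for every subgroup $\Gamma$ because $1\in\Gamma$. The only inclusion requiring an argument is the first: if $x\in(1+\m_0)_+$, then $x\equiv 1\pmod{\m_0}$ forces $x$ to be coprime to $\m_0$, hence $x\in R_\m$, while total positivity gives $w(x)>0$ for every real embedding and in particular for those dividing $\m_\infty$; together with $x+\m_0=1+\m_0$ this yields $[x]_\m=1$, so $x\in R_{\m,1}$. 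Since $\a\cap(1+\m_0)_+$ is sandwiched between the generating set appearing in the main claim and $\a$ itself, it suffices to prove $\langle\a\cap(1+\m_0)_+\rangle=\a$; the assertions for $R_{\m,\Gamma}$ and $R_{\m,1}$ then follow at once by monotonicity of $\langle\,\cdot\,\rangle$.

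For this it remains to prove the nontrivial inclusion $\a\subseteq\langle\a\cap(1+\m_0)_+\rangle$, which I would establish by exhibiting two generators of $\a$ lying in $\a\cap(1+\m_0)_+$. Because $\a\in\I_\m^+$, every prime dividing $\a$ is coprime to $\m_0$, so Lemma~\ref{lem:approx} produces a totally positive $a\in R_{\m,1}$ with $v_\p(a)=v_\p(\a)$ at each $\p\mid\a$; this $a$ lies in $\a\cap(1+\m_0)_+$. Feeding $a$ into Lemma~\ref{lem:twoGen} yields $b\in\a\cap R_{\m,1}$ with $\a=aR+bR$. The point I would emphasize is that the element $b$ in the proof of Lemma~\ref{lem:twoGen} is itself produced by Lemma~\ref{lem:approx} and is therefore totally positive, so $b\in\a\cap(1+\m_0)_+$ as well. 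Hence $\a=aR+bR\subseteq\langle\a\cap(1+\m_0)_+\rangle$, as required.

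The only genuine subtlety is precisely this simultaneous arrangement of total positivity: the sets $R_{\m,1}$ and $R_{\m,\Gamma}$ impose sign conditions only at the real embeddings dividing $\m_\infty$, whereas membership in $(1+\m_0)_+$ requires positivity at \emph{all} real embeddings. That one can meet the stronger condition while still controlling the prescribed valuations is exactly the content of Lemma~\ref{lem:approx}, so no new difficulty arises. If I preferred to avoid appealing to the internal construction in the proof of Lemma~\ref{lem:twoGen}, I would instead re-run that short argument directly: writing $aR=\a\c$ with $\c\in\I_\m^+$, I would apply Lemma~\ref{lem:approx} to the primes dividing $\a\c$, choosing a totally positive $b\in R_{\m,1}$ with $v_\p(b)=v_\p(\a)$ there; then $b\in\a$, the cofactor $\c'$ defined by $bR=\a\c'$ is coprime to $\c$, and $\a=\a(\c+\c')=aR+bR$ with both generators in $(1+\m_0)_+$.
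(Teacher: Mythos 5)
Your proof is correct, but it takes a genuinely different route from the paper's. You reduce everything to the strongest assertion via the inclusion chain $\a\cap(1+\m_0)_+\subseteq\a\cap R_{\m,1}\subseteq\a\cap R_{\m,\Gamma}\subseteq\a$ (a reduction the paper also makes, implicitly), and then you exhibit two explicit generators $a,b\in\a\cap(1+\m_0)_+$ with $\a=aR+bR$, by re-running the argument of Lemma~\ref{lem:twoGen} while tracking the total positivity that Lemma~\ref{lem:approx} already provides. You are right to flag that the stated conclusion of Lemma~\ref{lem:twoGen} (only $b\in\a\cap R_{\m,1}$) is not strong enough on its own, since membership in $(1+\m_0)_+$ demands positivity at \emph{all} real embeddings, not just those dividing $\m_\infty$; your self-contained re-run of that argument closes this cleanly and correctly (applying Lemma~\ref{lem:approx} to the primes dividing $\a\c$ does give $b\in\a$ and $\c+\c'=R$). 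The paper's proof, by contrast, uses neither Lemma~\ref{lem:approx} nor Lemma~\ref{lem:twoGen}: it writes $1=x_0+y_0$ with $x_0\in\a$ totally positive and $y_0\in\m_0$, identifies $\a\cap(1+\m_0)$ as the coset $x_0+\a\m_0$, shows that the ideal $\langle(x_0+\a\m_0)_+\rangle$ contains $\a\m_0$ (using that every element of a non-zero ideal is a difference of two totally positive elements of that ideal) and is coprime to $\m_0$ (a valuation argument at primes dividing $\m_0$), and concludes that it equals $\a$. Your approach is shorter, reuses the existing lemmas, and yields the marginally stronger conclusion that two elements of $\a\cap(1+\m_0)_+$ suffice to generate $\a$; the paper's approach is independent of the approximation lemma and, along the way, establishes the auxiliary fact that any non-zero ideal of $R$ is generated by the totally positive elements it contains --- a fact the paper cites again later, in the proof of Proposition~\ref{prop:func1}, so a reader replacing the paper's proof with yours would need to supply that observation separately.
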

\begin{proof}
Since $\a$ and $\m_0$ are coprime, $\a\cap\m_0=\a\m_0$, and there exists $x\in\a$ and $y\in \m_0$ such that $1=x+y$. Choose an integer $T\in \a\cap\m_0$ such that $x_0:=x+T$ is totally positive. Then $1=x_0+y_0$ with $x_0\in\a$ and $y_0:=y-T\in\m_0$.
	Now, 
\[
	\a\cap(1+\m_0)=
	\begin{cases}
z+\a\cap\m_0=z+\a\m_0 & \text{ if there exists } z\in \a\cap(1+\m_0)\\
\emptyset & \text{ otherwise.}
	\end{cases}
	\]
Hence, $\a\cap(1+\m_0)=x_0+\a\m_0$. Since $x_0\in (x_0+\a\m_0)_+$, it follows that $(\a\m_0)_+$ is contained in $\langle  (x_0+\a\m_0)_+\rangle  $. 

If $\b$ is any non-zero ideal of $R$ and $x$ an element of $\b$, then for sufficiently large $k\in \mathbb{N}^\times$, $x+kN(\b)$ is totally positive. Since $N(\b)\in\b_+$, and $x=(x+kN(\b))-kN(\b)$, we see that any element of a non-zero ideal of $R$ can be written as the difference of two totally positive elements each lying in the ideal. Using this fact, we see that $(\a\m_0)_+\subseteq\langle  (x_0+\a\m_0)_+\rangle  $ implies that $\langle  (x_0+\a\m_0)_+\rangle  $ contains $\a\m_0$. 

Since $\a\supseteq \langle  (x_0+\a\m_0)_+\rangle  $, we will be done if we show that $\m_0$ and $\langle  (x_0+\a\m_0)_+\rangle  $ are coprime.
	Since $x_0\in (x_0+\a\m_0)_+$, it suffices to show that $v_\p(x_0)=0$ for each $\p\mid \m_0$. Let $\p\mid\m_0$. Then we have $0=v_\p(1-x_0+x_0)\geq \min\{v_\p(1-x_0),v_\p(x_0)\}$. Now, $1-x_0=y_0\in\m_0\subseteq\p$, which implies that $v_\p(1-x_0)>0$. Hence, we must have $v_\p(x_0)=0$.
\end{proof}

\begin{proposition}\label{prop:constructibleIdeals}
The set $\I_\m^+\sqcup\{\emptyset\}$ is a semilattice with respect to intersections. For each $\a\in\I_\m^+$, the set $\a\cap  R_{\m,\Gamma}$ is a constructible right ideal of the multiplicative semigroup $R_{\m,\Gamma}$, and the map $\I_\m^+\sqcup\{\emptyset\}\to\J_{R_{\m,\Gamma}}$ given by $\a\mapsto \a\cap  R_{\m,\Gamma}$ and $\emptyset\mapsto\emptyset$ is an isomorphism of semilattices. Moreover, $\J_{ R_{\m,\Gamma}}$ is independent.
\end{proposition}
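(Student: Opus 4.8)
The plan is to identify $\left(\I_\m^+\sqcup\{\emptyset\}\right)$ with $\J_{R_{\m,\Gamma}}$ via the stated map and then to read off independence from the ideal structure. First I would record that $\I_\m^+\sqcup\{\emptyset\}$ is a semilattice under intersection: the intersection of two integral ideals coprime to $\m_0$ is again such an ideal, since $v_\p(\a\cap\b)=\max\{v_\p(\a),v_\p(\b)\}$ is $0$ at every $\p\mid\m_0$, and $\emptyset$ serves as an absorbing bottom element. The map $\a\mapsto\a\cap R_{\m,\Gamma}$ manifestly sends intersections to intersections, so once it is shown to be a bijection onto $\J_{R_{\m,\Gamma}}$ it is automatically a semilattice isomorphism.

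For surjectivity I would set $\mathcal{S}:=\{\a\cap R_{\m,\Gamma}:\a\in\I_\m^+\}\cup\{\emptyset\}$ and verify that $\mathcal{S}$ contains $\emptyset$ and $R_{\m,\Gamma}=R\cap R_{\m,\Gamma}$ and is closed under the three operations generating $\J_{R_{\m,\Gamma}}$; since $\J_{R_{\m,\Gamma}}$ is the smallest such family, this forces $\J_{R_{\m,\Gamma}}\subseteq\mathcal{S}$. Closure under intersection is the semilattice statement above. For $p\in R_{\m,\Gamma}$, a direct check—using that $\Gamma$ is a group, so that the $[\cdot]_\m$-class of the relevant quotient lands back in $\Gamma$—gives $p(\a\cap R_{\m,\Gamma})=(p\a)\cap R_{\m,\Gamma}$ and $p^{-1}(\a\cap R_{\m,\Gamma})=(p^{-1}\a\cap R)\cap R_{\m,\Gamma}$, with $p\a$ and $p^{-1}\a\cap R$ again in $\I_\m^+$. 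The reverse inclusion, that every $\a\cap R_{\m,\Gamma}$ is constructible, is where Lemma~\ref{lem:cutdown} enters: I would pick $a\in\a\cap R_{\m,1}$ (nonempty by Lemma~\ref{lem:determinedbyray}), choose $b\in R_{\m,1}$ with $\a=\tfrac{a}{b}R\cap R$, and check that $\a\cap R_{\m,\Gamma}=b^{-1}(aR_{\m,\Gamma})$; here I again use $[a]_\m=[b]_\m=1$ to see that an element of $\tfrac{a}{b}R\cap R_{\m,\Gamma}$ has its $\tfrac{b}{a}$-multiple back in $R_{\m,\Gamma}$.

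Injectivity of the map is immediate from Lemma~\ref{lem:determinedbyray}, which says $\a=\langle\a\cap R_{\m,\Gamma}\rangle$, so $\a$ is recovered from its image; the same lemma shows $\a\cap R_{\m,\Gamma}\neq\emptyset$, keeping the nonempty ideals separate from $\emptyset$. This is the delicate input that makes the whole scheme work even for small $\Gamma$ (e.g.\ $\Gamma$ trivial), where $\a\cap R_{\m,\Gamma}$ could a priori be too sparse to determine $\a$.

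The hardest part is independence. Translating through the bijection, a relation $\bigcup_i(\a_i\cap R_{\m,\Gamma})=\a\cap R_{\m,\Gamma}$ forces $\a_i\subseteq\a$ for each $i$, and if no $\a_i$ equals $\a$ then each $\a_i=\a\c_i$ with $\c_i$ a proper integral ideal, so $\a_i\subseteq\a\p_i$ for some prime $\p_i\nmid\m_0$ dividing $\c_i$. It then suffices to produce a single element of $\a\cap R_{\m,\Gamma}$ avoiding all the $\a\p_i$, that is, an $x\in\a\cap R_{\m,1}$ with $v_{\p_j}(x)=v_{\p_j}(\a)$ at each of the finitely many primes $\p_1,\dots,\p_k$ involved. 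I would build such an $x$ by a Chinese Remainder Theorem argument in the spirit of Lemma~\ref{lem:approx}: prescribing $v_{\p_j}(x)=v_{\p_j}(\a)$ exactly via a congruence modulo $\p_j^{v_{\p_j}(\a)+1}$ against a suitable uniformizer power, imposing $x\equiv0$ modulo the prime-to-$\{\p_1,\dots,\p_k\}$ part of $\a$ so that $x\in\a$, and $x\equiv1\pmod{\m_0}$; after adding a totally positive element of the product of these pairwise coprime moduli, $x$ becomes totally positive while still $\equiv1\bmod\m_0$, hence $x\in R_{\m,1}\subseteq R_{\m,\Gamma}$. Such an $x$ lies in $\a\cap R_{\m,\Gamma}$ but in no $\a_i$, contradicting the covering, so some $\a_i=\a$. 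The only point to watch is that the totally-positive, $\equiv1\bmod\m_0$ adjustment preserve the prescribed $\p_j$-valuations, which it does because each correction term lies in $\p_j^{v_{\p_j}(\a)+1}$.
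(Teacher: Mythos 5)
Your proposal is correct and follows essentially the same route as the paper: constructibility of $\a\cap R_{\m,\Gamma}$ via Lemma~\ref{lem:cutdown} and the observation that $[a]_\m=[b]_\m=1$, surjectivity by showing the candidate family satisfies the minimality/closure properties defining $\J_{R_{\m,\Gamma}}$, injectivity from Lemma~\ref{lem:determinedbyray}, and independence by producing an element of $\a\cap R_{\m,1}$ with prescribed valuations that avoids every $\a_i$. The only cosmetic differences are that the paper checks closure under the single combined operation $X\mapsto x X\cap R_{\m,\Gamma}$ for $x\in K_{\m,\Gamma}$ rather than the two generating operations separately, and its independence step invokes Lemma~\ref{lem:approx} directly at all primes dividing the ideals involved instead of your witness-prime plus Chinese Remainder Theorem repackaging, which amounts to the same computation.
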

\begin{proof}
It is clear that $\I_\m^+\sqcup\{\emptyset\}$ is a semilattice with respect to intersections. Now let $\a\in \I_\m^+$. By Lemma~\ref{lem:cutdown}, there exists $a,b\in  R_{\m,1}$ such that $\a=\frac{a}{b} R\cap R$, and so have $\a\cap R_{\m,\Gamma}=\frac{a}{b} R\cap R_{\m,\Gamma}$. If $x\in  R$ such that $\frac{a}{b}x\in R_{\m,\Gamma}$, then $x$ lies in $R_{\m,\Gamma}$; it follows that $\a\cap R_{\m,\Gamma}=\frac{a}{b} R_{\m,\Gamma}\cap R_{\m,\Gamma}$, which clearly lies in $\J_{ R_{\m,\Gamma}}$. This settles the second claim.

To show surjectivity, it suffices to show that $\J:=\{\a\cap R_{\m,\Gamma} : \a\in\I_\m^+\}\subseteq \J_{R_{\m,\Gamma}}\cup\{\emptyset\}$ satisfies the characterizing properties of $\J_{R_{\m,\Gamma}}$ (see Section~\ref{subsec:semigpC*}). Clearly, $\emptyset$ and $R_{\m,\Gamma}$ lie in $\J$. Let $\a\in \I_\m^+$ and $x\in  K_{\m,\Gamma}$. If $xa=y\in R_{\m,\Gamma}$ for some $a\in\a$, then $[a]_\m=[x]_\m^{-1}[y]_\m\in\Gamma$, so $a\in R_{\m,\Gamma}$. Thus,
\[
x(\a\cap R_{\m,\Gamma})\cap  R_{\m,\Gamma}=x\a\cap xR_{\m,\Gamma}\cap R_{\m,\Gamma}=x\a\cap  R_{\m,\Gamma}=(x\a\cap R) \cap R_{\m,\Gamma}
\]
 lies in $\J$, which proves that $\J$ satisfies the desired properties. Hence, $\J_{ R_{\m,\Gamma}}\subseteq\J$ which shows that the map $\a\mapsto \a\cap R_{\m,\Gamma}$ is surjective.  

Suppose now that $\a\cap R_{\m,\Gamma}=\b\cap R_{\m,\Gamma}$ for $\a,\b\in \I_\m^+$. Then Lemma~\ref{lem:determinedbyray} implies that $\a=\b$, so this map is also injective. 

It remains to show independence. Suppose that $\a,\a_1,...,\a_k\in \I_\m^+$ are distinct ideals such that $\a_i\cap  R_{\m,\Gamma}\subseteq \a\cap R_{\m,\Gamma}$ for $i=1,...,k.$ We need to show that $\bigcup_{i=1}^k\a_i\cap R_{\m,\Gamma}\subsetneq \a\cap  R_{\m,\Gamma}$. By Lemma~\ref{lem:determinedbyray}, the inclusion $\a_i\cap  R_{\m,\Gamma}\subseteq \a\cap R_{\m,\Gamma}$ implies that $\a_i\subseteq\a$. Since $\a_i\neq\a$, we even have $\a_i\subsetneq \a$ for $1\leq i\leq k$. Thus, there are positive integers $N\leq M$, distinct non-zero primes $\p_1,...,\p_N,\p_{N+1},...,\p_M$, and non-negative integers $n_1,...n_N,n_{i,1},...,n_{i,M}$, for $1\leq i\leq k$, with $n_j\leq n_{i,j}$ for all $1\leq j\leq N$, $1\leq i\leq M$, such that
\[
\a=\p_1^{n_1}\cdots \p_N^{n_N} \quad \text{ and } \quad \a_i=\p_1^{n_{i,1}}\cdots\p_N^{n_{i,N}}\cdots \p_M^{n_{i,N}}.
\]
By Lemma~\ref{lem:approx}, there exists $x\in  R_{\m,1}$ such that $v_{\p_j}(x)=n_j$ for $j=1,...,N$ and $v_{\p_i}(x)=0$ for $i=N+1,...,M$. It follows that $x\in \a$ and $x\not\in \a_i$ for $i=1,...,k$. Thus, $x\in \a\cap R_{\m,1}\setminus \bigcup_{i=1}^k\a_i$. Since $ R_{\m,1}\subseteq R_{\m,\Gamma}$, it follows that $x\in \a\cap R_{\m,\Gamma}\setminus \bigcup_{i=1}^k\a_i$, so we are done.
\end{proof}

We are now ready to prove Proposition~\ref{prop:CI}.

\begin{proof}[Proof of Proposition~\ref{prop:CI}]
If $x+\a,y+\b$ lie in $\bigsqcup_{\a\in\I_\m^+}R/\a$, then 

\[
(x+\a)\cap (y+\b)=\begin{cases}
z+\a\cap\b & \text{ if } z\in (x+\a)\cap (y+\b) \\
\emptyset & \text{ if } (x+\a)\cap (y+\b)=\emptyset.
\end{cases}
\]
Thus, $\left(\bigsqcup_{\a\in\I_\m^+}R/\a\right)\sqcup\{\emptyset\}$  is a semilattice with respect to intersections

Let $\a\in\I_\m^+$. By Lemma~\ref{lem:cutdown}, we can write $\a=\frac{a}{b}R\cap R$ for some $a,b\in R_{\m,1}$. As in the proof of Proposition~\ref{prop:constructibleIdeals}, we have $\a\cap R_{\m,\Gamma}=\frac{a}{b}R_{\m,\Gamma}\cap R_{\m,\Gamma}$. Thus,
\[
[(0,\frac{a}{b})R\rtimes R_{\m,\Gamma}]\cap R\rtimes R_{\m,\Gamma}=\a\times(\a\cap  R_{\m,\Gamma}),
\]
and for $x\in R$ we have $(x,0)(\a\times(\a\cap  R_{\m,\Gamma}))=(x+\a)\times(\a\cap  R_{\m,\Gamma})$. Hence, $(x+\a)\times(\a\cap  R_{\m,\Gamma})$ is in $\J_{R\rtimes R_{\m,\Gamma}}$ for all $x\in R$ and $\a\in \I_\m^+$. 

To show surjectivity, it suffices to show that $\tilde{\J}:=\{(x+\a)\times(\a\cap  R_{\m,\Gamma}): x\in R,\;\a\in \I_\m^+\}\cup\{\emptyset\}$ satisfies the characterizing properties of $\J_{R\rtimes R_{\m,\Gamma}}$ (see Section~\ref{subsec:semigpC*}).
It is easy to see that $\tilde{\J}$ is closed under taking finite intersections. Let $(x+\a)\times(\a\cap  R_{\m,\Gamma})\in\tilde{\J}$ and $(b,a)\in  R\rtimes R_{\m,\Gamma}$.
Then  
\[
(b,a)[(x+\a)\times(\a\cap  R_{\m,\Gamma})]=(b+ax+a\a)\times (a\a\cap R_{\m,\Gamma})
\]
lies in $\tilde{\J}$. Moreover, for any $c\in R_{\m,\Gamma}$,
\[
(0,c)^{-1}[(x+\a)\times(\a\cap  R_{\m,\Gamma})]\cap (R\rtimes R_{\m,\Gamma})=[c^{-1}(x+\a)\times(c^{-1}\a\cap  R_{\m,\Gamma})]\cap ( R\rtimes R_{\m,\Gamma}).
\]
Now, $c^{-1}(x+\a)\cap R=c^{-1}((x+\a)\cap cR)$ is either empty or of the form $c^{-1}z+c^{-1}\a\cap R$ for some $z\in (x+\a)\cap cR$, and $c^{-1}(\a\cap  R_{\m,\Gamma})\cap R_{\m,\Gamma}=c^{-1}\a\cap R_{\m,\Gamma}.$ It follows that $\J$ satisfies the conditions in Section~\ref{subsec:semigpC*}, which concludes the proof of surjectivity.

Injectivity follows as in the proof of Proposition~\ref{prop:constructibleIdeals}, and independence of $\J_{R\rtimes R_{\m,\Gamma}}$ follow from independence of $\J_{ R_{\m,\Gamma}}$.
\end{proof}

We conclude this section by giving several corollaries. The first simply says that Proposition~\ref{prop:CI} generalizes the computation of $\J_{R\rtimes R^\times}$ from \cite[Section~2.4]{Li2}.

\begin{corollary}[{\cite[Section~2.4]{Li2}}]
We have
\[
\J_{ R\rtimes  R^\times}=\{(x+\a)\times\a^\times: x\in R,\;\a\in \I^+\}\cup\{\emptyset\}
\]
where $\a^\times:=\a\setminus\{0\}$. Moreover, $\J_{ R\rtimes  R^\times}$ is independent.
\end{corollary}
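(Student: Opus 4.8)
The plan is to obtain the corollary as the special case of Proposition~\ref{prop:CI} corresponding to the trivial modulus. First I would observe that when $\m=\m_0=R$, the group $(R/\m)^*$ is trivial, so the only possible choice of subgroup is $\Gamma=\{1\}$. With this choice, the congruence monoid $R_{\m,\Gamma}=R_\m$ consists of all $a\in R^\times$ satisfying $v_\p(a)=0$ for every prime $\p\mid\m_0$; since no prime divides $\m_0=R$, this condition is vacuous and $R_{\m,\Gamma}=R^\times$. Hence $R\rtimes R_{\m,\Gamma}=R\rtimes R^\times$ on the nose, and in particular $\J_{R\rtimes R_{\m,\Gamma}}=\J_{R\rtimes R^\times}$.

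Next I would identify the two remaining ingredients of Proposition~\ref{prop:CI} in this setting. The monoid $\I_\m^+$ of integral ideals coprime to $\m_0=R$ is simply $\I^+$, the monoid of all non-zero integral ideals, since coprimality to $R$ imposes no constraint. Moreover, for each $\a\in\I^+$ we have $\a\cap R_{\m,\Gamma}=\a\cap R^\times=\a\setminus\{0\}=\a^\times$.

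Substituting these identifications into the conclusion of Proposition~\ref{prop:CI}, the semilattice isomorphism $x+\a\mapsto(x+\a)\times(\a\cap R_{\m,\Gamma})$ becomes $x+\a\mapsto(x+\a)\times\a^\times$, whose range is exactly the set $\{(x+\a)\times\a^\times : x\in R,\;\a\in\I^+\}\cup\{\emptyset\}$ claimed for $\J_{R\rtimes R^\times}$. The independence of $\J_{R\rtimes R^\times}$ is likewise immediate from the independence assertion in Proposition~\ref{prop:CI}. I expect no genuine obstacle here: the entire content of the proof is the bookkeeping of specializing the general construction to the trivial modulus, while all of the substantive work was already carried out in the lemmas leading up to Proposition~\ref{prop:CI}.
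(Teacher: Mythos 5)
Your proposal is correct and matches the paper's proof exactly: the paper likewise obtains the corollary by applying Proposition~\ref{prop:CI} with trivial $\m$ and $\Gamma$, and your spelled-out identifications ($R_{\m,\Gamma}=R^\times$, $\I_\m^+=\I^+$, $\a\cap R^\times=\a^\times$) are precisely the routine specializations that proof leaves implicit.
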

\begin{proof}
 Apply Proposition~\ref{prop:CI} for the case of trivial $\m$ and $\Gamma$.
\end{proof}

As before, let $i:K_\m\to \I_\m$ denote the map $i(x)=xR$. Then the group $\I_\m/i(K_{\m,\Gamma})$ is a quotient of the finite group $\Cl_\m(K)$, hence is finite; indeed, $\I_\m/i(K_{\m,\Gamma})\cong\Cl_\m(K)/\bar{\Gamma}$ where $\bar{\Gamma}=i(K_{\m,\Gamma})/i(K_{\m,1})$. Recall that a semigroup is right LCM if all of its constructible right ideals are principal. 

\begin{corollary}\label{cor:rightLCM}
The semigroup $R\rtimes R_{\m,\Gamma}$ is right LCM if and only if the group $\I_\m/i(K_{\m,\Gamma})$ is trivial.
\end{corollary}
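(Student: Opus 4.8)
The plan is to translate the right LCM condition---that every constructible right ideal of $R\rtimes R_{\m,\Gamma}$ is principal---through the explicit description of $\J_{R\rtimes R_{\m,\Gamma}}$ furnished by Proposition~\ref{prop:CI}. Recall that a principal right ideal of $R\rtimes R_{\m,\Gamma}$ has the form $(b,a)(R\rtimes R_{\m,\Gamma})=(b+aR)\times(aR_{\m,\Gamma})$ for some $(b,a)\in R\rtimes R_{\m,\Gamma}$. Under the isomorphism of Proposition~\ref{prop:CI}, a generic nonempty constructible right ideal is $(x+\a)\times(\a\cap R_{\m,\Gamma})$ for $x\in R$ and $\a\in\I_\m^+$. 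First I would observe that a principal ideal $(b+aR)\times(aR_{\m,\Gamma})$ corresponds under this isomorphism to the coset data with $\a=aR$; in particular the ideal $\a$ appearing is a \emph{principal} integral ideal of the form $aR$ with $a\in R_{\m,\Gamma}$, i.e.\ $\a\in i(K_{\m,\Gamma})\cap\I_\m^+$. So the question of whether every constructible right ideal is principal reduces, via the semilattice isomorphism, to whether every $\a\in\I_\m^+$ can be realized as $aR$ for some $a\in R_{\m,\Gamma}$.

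The key step is therefore to show that $\I_\m/i(K_{\m,\Gamma})$ being trivial is equivalent to every $\a\in\I_\m^+$ being principal with a generator in $R_{\m,\Gamma}$. For the ``if'' direction, suppose $\I_\m/i(K_{\m,\Gamma})$ is trivial, so $\I_\m=i(K_{\m,\Gamma})$; then any $\a\in\I_\m^+$ equals $xR$ for some $x\in K_{\m,\Gamma}$, and by Proposition~\ref{prop:gpofquotients} we may write $x=a/b$ with $a,b\in R_{\m,\Gamma}$. Since $\a$ is integral and coprime to $\m_0$, I would argue that $\a$ is in fact generated by an element of $R_{\m,\Gamma}$: the fractional ideal $\a=aR\cdot(bR)^{-1}$ is integral, and a short manipulation (clearing the denominator using that $bR\mid aR$ as ideals, and using surjectivity of $[\cdot]_\m$ from Lemma~\ref{prop:onto} to adjust the residue back into $\Gamma$) produces a genuine generator in $R_{\m,\Gamma}$. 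Then $(x+\a)\times(\a\cap R_{\m,\Gamma})$ is principal for every $x$, as desired. For the ``only if'' direction, if $\I_\m/i(K_{\m,\Gamma})$ is nontrivial, pick $\a\in\I_\m^+$ whose class is nontrivial; then $\a$ has no generator in $K_{\m,\Gamma}$, hence none in $R_{\m,\Gamma}$, so the constructible right ideal $\a\times(\a\cap R_{\m,\Gamma})$ cannot be principal, and the semigroup fails to be right LCM.

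The main obstacle I anticipate is the bookkeeping in the ``if'' direction: passing from $x=a/b\in K_{\m,\Gamma}$ with $\a=xR$ integral to an actual single generator $a_0\in R_{\m,\Gamma}$ with $\a=a_0R$ requires care, because the naive generator need not be integral or need not have residue in $\Gamma$. I would handle this exactly as in Lemma~\ref{lem:cutdown} and the proof of Proposition~\ref{prop:constructibleIdeals}: use that $\a$ is coprime to $\m_0$ together with the surjectivity in Lemma~\ref{prop:onto} to multiply by a suitable unit-type correction term in $R_{\m,1}$, adjusting the signature and residue so that the resulting generator lies in $R_{\m,\Gamma}$ without changing the underlying ideal. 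Once the equivalence ``$\a$ principal in $R_{\m,\Gamma}$ for all $\a$'' $\Longleftrightarrow$ ``$\I_\m/i(K_{\m,\Gamma})$ trivial'' is in hand, the corollary follows immediately from the explicit form of principal versus general constructible right ideals.
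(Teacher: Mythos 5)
Your proposal is correct and follows essentially the same route as the paper: Proposition~\ref{prop:CI} reduces the right LCM property to the statement that every $\a\in\I_\m^+$ has a generator in $R_{\m,\Gamma}$, and this is then identified with triviality of $\I_\m/i(K_{\m,\Gamma})$. One remark: the obstacle you anticipate in the ``if'' direction is actually vacuous, since if $\a=xR$ is integral with $x\in K_{\m,\Gamma}$, then $x\in xR\subseteq R$ already, so $x\in R\cap K_{\m,\Gamma}=R_{\m,\Gamma}$ and no residue or integrality adjustment is needed.
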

\begin{proof}
By Proposition~\ref{prop:CI}, $R\rtimes R_{\m,\Gamma}$ is right LCM  if and only if every integral ideal $\a\in\I_\m^+$ is principal and generated by some $a\in R_{\m,\Gamma}$. This is equivalent to $\I_\m/i(K_{\m,\Gamma})$ being trivial.
\end{proof}

Let $K=\mathbb{Q}$, so that $R=\mathbb{Z}$. Let $m\in\mathbb{N}^\times$ be a positive natural number, and let $\m=\m_\infty\m_0$ where $\m_\infty$ takes the value one at the only real embedding of $\mathbb{Q}$ and $\m_0(p):=v_p(m)$. Then a calculation shows that $\I_\m/i(K_{\m,1})\cong (\mathbb{Z}/m\mathbb{Z})^*$. Thus, Corollary~\ref{cor:rightLCM} shows that, even in the case $K=\mathbb{Q}$, the semigroup $R\rtimes R_{\m,\Gamma}$ is usually not right LCM.

We also have: 

\begin{corollary}\label{cor:CI}
The map $\J_{R\rtimes  R_{\m,\Gamma}}\to \J_{ R\rtimes  R^\times}$ given by $(x+\a)\times (\a\cap R_{\m,\Gamma})\mapsto (x+\a)\times \a^\times$ and $\emptyset\mapsto \emptyset$ is an injective map of semilattices.
\end{corollary}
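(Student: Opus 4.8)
The plan is to show that the indicated map is a well-defined injective homomorphism of semilattices. First I would argue that the assignment is well-defined. By Proposition~\ref{prop:CI}, every nonempty element of $\J_{R\rtimes R_{\m,\Gamma}}$ has a unique expression as $(x+\a)\times(\a\cap R_{\m,\Gamma})$ with $\a\in\I_\m^+$ and $x+\a\in R/\a$; the isomorphism there guarantees that $\a$ and the coset $x+\a$ are determined by the set, so the target $(x+\a)\times\a^\times$ does not depend on the chosen representative. Here I would use that $\I_\m^+\subseteq\I^+$, so $\a^\times$ indeed makes sense as an element indexing $\J_{R\rtimes R^\times}$ via the corollary to Proposition~\ref{prop:CI}.

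Next I would verify that the map respects intersections. Both semilattices are, by Proposition~\ref{prop:CI} and its corollary, identified with disjoint unions of coset spaces $\bigsqcup_\a R/\a$ (over $\I_\m^+$ and over $\I^+$ respectively), with the intersection operation computed exactly as in the displayed formula in the proof of Proposition~\ref{prop:CI}: the intersection of $(x+\a)\times(\a\cap R_{\m,\Gamma})$ and $(y+\b)\times(\b\cap R_{\m,\Gamma})$ is either empty, or equal to $(z+\a\cap\b)\times(\a\cap\b\cap R_{\m,\Gamma})$ for some $z$ in the intersection of the cosets. The key point is that the additive intersection $(x+\a)\cap(y+\b)$ and the resulting ideal $\a\cap\b$ are computed purely in terms of $\a$, $\b$, $x$, $y$ and do not reference $\Gamma$; the $\Gamma$-dependence enters only in the multiplicative second coordinate $\a\cap R_{\m,\Gamma}$. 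Thus applying the map before or after intersecting yields the same element $(z+\a\cap\b)\times(\a\cap\b)^\times$, since $\a\cap\b\in\I_\m^+$ and $\a\cap\b\in\I^+$ agree, and emptiness is preserved. This shows the map is a homomorphism of semilattices.

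Finally, for injectivity, suppose $(x+\a)\times\a^\times=(y+\b)\times\b^\times$ in $\J_{R\rtimes R^\times}$. By the corollary to Proposition~\ref{prop:CI}, the map $x+\a\mapsto(x+\a)\times\a^\times$ is itself injective on $\bigsqcup_{\a\in\I^+}R/\a$, so $\a=\b$ and $x+\a=y+\b$ as cosets. Then $(x+\a)\times(\a\cap R_{\m,\Gamma})=(y+\b)\times(\b\cap R_{\m,\Gamma})$ immediately, giving injectivity of our map. I expect the main (though mild) obstacle to be bookkeeping around the empty set and around the fact that the two target index sets $\I_\m^+$ and $\I^+$ differ: one must check that distinct ideals in $\I_\m^+$ stay distinct when viewed in $\I^+$, which is automatic since $\I_\m^+\subseteq\I^+$, and that the forgetful passage from $\a\cap R_{\m,\Gamma}$ to $\a^\times$ never collapses two distinct classes, which is exactly the injectivity statement just proved. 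No deep new idea is required beyond carefully invoking the semilattice isomorphisms already established.
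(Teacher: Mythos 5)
Your proposal is correct and takes essentially the same route as the paper: both arguments reduce everything to the parametrizations of the two semilattices established in Proposition~\ref{prop:CI} (and Proposition~\ref{prop:constructibleIdeals}), using their uniqueness/injectivity statements for well-definedness and injectivity, with compatibility of intersections being a routine coordinate-wise check. The only cosmetic difference is which of the equivalent uniqueness statements you cite at each step (you invoke Proposition~\ref{prop:CI} for well-definedness and its corollary for injectivity, while the paper cites Proposition~\ref{prop:constructibleIdeals} and Proposition~\ref{prop:CI}, respectively).
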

\begin{proof}
The map $(x+\a)\times (\a\cap R_{\m,\Gamma})\mapsto (x+\a)\times \a^\times$ is well-defined by Proposition~\ref{prop:constructibleIdeals}, and it is not difficult to see that it is a map of semilattices. Injectivity follows from Proposition~\ref{prop:CI}.
\end{proof}

\section{Presentations for $C_\lambda^*(R\rtimes R_{\m,\Gamma})$.}\label{subsec:presentation}

Let $K$ be a number field with ring of integers $R$. Also let $\m$ be a modulus for $K$, let $S:=\{\p\in\P_K : \p\mid \m_0\}$ be the \emph{support} of $\m_0$, and let $\Gamma\subseteq (R/\m)^*$ be a subgroup. These will remain fixed throughout this section.

We begin with a short discussion of semigroup crossed products. Let $P$ be a subsemigroup of a countable group $G$ as in Section~\ref{subsec:semigpC*}, and suppose that $\alpha$ is an action of $P$ on a unital C*-algebra $D$ by injective *-endomorphisms. The triple $(D,P,\alpha)$ is called a \emph{semigroup dynamical system}. A \emph{covariant representation of $(D,P,\alpha)$} in a unital C*-algebra $B$ is a pair $(\pi,V)$ where $\pi:D\to B$ is a unital *-homomorphism, and $V:P\to \textup{Isom}(B)$ is a semigroup homomorphism satisfying the \emph{covariance condition}
 \begin{equation}\label{eqn:covariance}
V_p\pi(d)V_p^*=\pi(\alpha_p(d))\quad \text{ for all } p\in P \text{ and } d\in D. 
 \end{equation}
 Here, $\textup{Isom}(B)$ denotes the semigroup of isometries in $B$.
 Given a semigroup dynamical system $(D,P,\alpha)$, the \emph{semigroup crossed product $D\rtimes_\alpha P$}, as defined in \cite[Definition~2.2]{LR}, is the universal unital C*-algebra for covariant representations of $(D,P,\alpha)$; that is, $D\rtimes_\alpha P$ is a unital C*-algebra, and there is a covariant representation $(i_D,v)$ of $(D,P,\alpha)$ in $D\rtimes_\alpha P$ such that 
\begin{itemize}
\item $D\rtimes_\alpha P=C^*(\{i_D(d) : d\in D\}\cup\{v_p : p\in P\})$;
\item for any covariant representation $(\pi,V)$ of $(D,P,\alpha)$ in a C*-algebra $B$, there exists a representation $\pi\times V: D\rtimes_\alpha P\to B$ such that $(\pi\times V)\circ i_D=\pi$ and $(\pi\times V)\circ v=V$.
\end{itemize}

Following \cite{Li2}, we now show how to canonically associate a semigroup dynamical system with $P$. By definition, a \emph{semilattice} is a commutative semigroup in which every element is an idempotent; the collection $\J_P$ is a semilattice with semigroup operation given by intersection of subsets.
The \emph{C*-algebra of $\J_P$}, as defined in \cite[Section~2]{LiNor}, is the universal C*-algebra $C_u^*(\J_P)$ generated by projections $\{e_X : X\in \J_P\}$ such that 
\[
e_\emptyset=0 \quad\text{ and }\quad e_Xe_Y=e_{X\cap Y}\quad  \text{ for all }X,Y\in\J_P.
\]

Note that $C_u^*(\J_P)$ is unital with unit $e_P$. Since the collection $\{e_X : X\in \J_P\}$ of generating projections is closed under multiplication, we have $C_u^*(\J_P)=\overline{\spn}(\{e_X : X\in \J_P\})$. The universal property of $C_u^*(\J_P)$ implies existence of a *-homomorphism $C_u^*(\J_P)\to D_\lambda(P)$ determined on the spanning projections by $e_X\mapsto E_X$ where $E_X\in\mathcal{B}(\ell^2(P))$ is, as in Section~\ref{subsec:semigpC*}, the orthogonal projection from $\ell^2(P)$ onto $\ell^2(X)\subseteq\ell^2(P)$. By \cite[Proposition~2.24]{Li2}, this map is an isomorphism if and only if $P$ satisfies the independence condition.

The semigroup $P$ acts on the semilattice $\J_P$ by left multiplication, $p: X\mapsto pX$, which gives rise to an action of $P$ on the (commutative) C*-algebra $C_u^*(\J_P)$ of the semilattice $\J_\P$ by injective *-endomorphisms $\alpha_p$ that are determined on the generating projections by $\alpha_p(e_X)=e_{pX}$. Thus, we get the semigroup dynamical system $(C_u^*(\J_P),P,\alpha)$. 
From the definition of $C_u^*(\J_P)$ we see that the crossed product $C_u^*(\J_P)\rtimes_\alpha P$ is the universal C*-algebra generated by isometries $\{v_p : p\in P\}$ and projections $\{e_X : X\in \J_P\}$ such that
\begin{enumerate}[label=(\Roman*)]
\item\label{relation:(I)} $v_pv_q=v_{pq}$ and $v_pe_Xv_p^*=e_{pX}$ for all $p,q\in P$ and $X\in \J_P$; 
\item\label{relation:(II)} $e_\emptyset=0$, $e_P=1$, and $e_Xe_Y=e_{X\cap Y}$ for all $X,Y\in\J_P$.
\end{enumerate}
This is precisely the presentation for the \emph{(full) semigroup C*-algebra $C^*(P)$ of $P$} as given in \cite[Definition~2.2]{Li2}, so $C^*(P)=C_u^*(\J_P)\rtimes_\alpha P$, see \cite[Lemma~2.14]{Li2}.

Let $\lambda:p\mapsto \lambda_p\in \textup{Isom}(C_\lambda^*(P))$ be the left regular representation of $P$, and let $\eta$ be the canonical *-homomorphism $\eta:C_u^*(\J_P)\to D_\lambda(P)$ such that $\eta(e_X)=E_X$. Then the pair $(\eta,\lambda)$  is a covariant representation of $(C_u^*(\J_P),P,\alpha)$ in $C_\lambda^*(P)$. The associated representation $C^*(P)\to C_\lambda^*(P)$ determined by $v_p\mapsto \lambda_p$ and $e_X\mapsto E_X$ is called the \emph{left regular representation of $C^*(P)$}.

We now turn to the special case of $P_{\m,\Gamma}:=R\rtimes R_{\m,\Gamma}$. First, note that by Proposition~\ref{prop:CI}, the semigroup $P_{\m,\Gamma}$ satisfies the independence condition, so \cite[Proposition~2.24]{Li2} asserts that the canonical *-homomorphism $C_u^*(\J_{P_{\m,\Gamma}})\to D_\lambda(P_{\m,\Gamma})$ is an isomorphism.

\begin{proposition}\label{prop:full=reduced}
The left regular representation $C^*(P_{\m,\Gamma})\to C_\lambda^*(P_{\m,\Gamma})$ is an isomorphism.
\end{proposition}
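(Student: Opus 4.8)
The plan is to show that the left regular representation $C^*(P_{\m,\Gamma})\to C_\lambda^*(P_{\m,\Gamma})$ is an isomorphism by appealing to the general machinery from Li's theory that links this question to amenability-type conditions. The standard route is as follows. The left regular representation is always a surjective $*$-homomorphism onto $C_\lambda^*(P_{\m,\Gamma})$ by construction, so the content of the statement is injectivity. By \cite[Corollary~5.6.42]{CELY} (or the corresponding result in \cite{Li2}), if a semigroup $P$ embeds in a group $G$ and satisfies the independence condition, then the left regular representation $C^*(P)\to C_\lambda^*(P)$ is an isomorphism provided the enveloping group $G$ is amenable. I have already established in Proposition~\ref{prop:CI} that $P_{\m,\Gamma}$ satisfies the independence condition, and in Proposition~\ref{prop:ore} that $P_{\m,\Gamma}$ is left Ore with enveloping group $(R_\m^{-1}R)\rtimes K_{\m,\Gamma}$. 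Thus the first step is to reduce the claim to amenability of this enveloping group.

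The key observation is that the enveloping group $(R_\m^{-1}R)\rtimes K_{\m,\Gamma}$ is a semidirect product of two abelian groups: the additive group $R_\m^{-1}R$ (a localization of $R$, hence a torsion-free abelian group) and the multiplicative group $K_{\m,\Gamma}\subseteq K^\times$ (abelian as a subgroup of the multiplicative group of a field). An extension of an amenable group by an amenable group is amenable, and abelian groups are amenable; therefore the enveloping group is amenable. This is the heart of the argument, and it is genuinely easy once the structure of the enveloping group from Proposition~\ref{prop:ore} is in hand.

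Putting these pieces together, the proof is short: I would first recall that the left regular representation is surjective; then invoke Proposition~\ref{prop:CI} for independence and Proposition~\ref{prop:ore} for the identification of the enveloping group; then note that this group is amenable as an extension of abelian groups by abelian groups; and finally cite the general result (\cite[Corollary~5.6.42]{CELY}, or equivalently the combination of \cite[Corollary~2.32]{Li2} with the fact that for an independent, left Ore semigroup the left regular representation is an isomorphism when the enveloping group is amenable) to conclude injectivity, hence that the map is an isomorphism. No step requires any substantial computation.

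The only point that requires care—and hence the main (mild) obstacle—is pinning down the precise form of the general theorem being invoked, since the literature states the amenability-implies-isomorphism result under slightly different hypotheses (amenability of $G$ versus the weaker condition that $P$ be ``left reversible'' or satisfy the Toeplitz condition). Because $P_{\m,\Gamma}$ is left Ore by Proposition~\ref{prop:ore}, it is in particular left reversible, so all the hypotheses of the cleanest version of the theorem are met, and the amenability of the enveloping group is exactly the extra ingredient needed to pass from the full semigroup C*-algebra to its reduced counterpart.
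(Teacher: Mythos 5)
Your proposal is correct and follows essentially the same route as the paper: the paper's proof likewise combines Proposition~\ref{prop:ore} (left Ore with enveloping group $(R_\m^{-1}R)\rtimes K_{\m,\Gamma}$, which is solvable --- your ``extension of abelian by abelian'' observation --- hence amenable) with Proposition~\ref{prop:CI} (independence), and then invokes the general machinery, citing \cite[Section~3.1]{Li2} together with \cite[Theorem~6.1]{Li3}. The only difference is bookkeeping over which form of the general theorem to cite, which you flag yourself and which does not affect the argument.
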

\begin{proof}
By Proposition~\ref{prop:ore}, $P_{\m,\Gamma}$ is left Ore with solvable, hence amenable, enveloping group $(R_\m^{-1}R)\rtimes K_{\m,\Gamma}$, and $P_{\m,\Gamma}$ satisfies the independence condition by Proposition~\ref{prop:CI}; hence, \cite[Section~3.1]{Li2} combined with \cite[Theorem~6.1]{Li3} implies our claim.
\end{proof}

From now on, we will use Proposition~\ref{prop:full=reduced} to identify $C^*(P_{\m,\Gamma})=C_u^*(\J_{P_{\m,\Gamma}})\rtimes P_{\m,\Gamma}$ with $C_\lambda^*(P_{\m,\Gamma})$. We also have:

\begin{proposition}\label{prop:inclusionofsemilattices}
The canonical inclusion of semilattices $\J_{P_{\m,\Gamma}}\to \J_{R\rtimes R^\times}$ from Corollary~\ref{cor:CI} gives rise to an injective *-homomorphism $C_u^*(\J_{P_{\m,\Gamma}})\to C_u^*(\J_{R\rtimes R^\times})$ such that $e_{(x+\a)\times (\a\cap R_{\m,\Gamma})}\mapsto e_{(x+\a)\times\a^\times}$. Moreover, this map is equivariant for the obvious $P_{\m,\Gamma}$-actions.
\end{proposition}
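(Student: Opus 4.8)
The plan is to first produce the $*$-homomorphism from the universal property of $C_u^*(\J_{P_{\m,\Gamma}})$ and then verify injectivity by exhibiting a one-sided inverse built from restriction of functions. Write $\iota\colon\J_{P_{\m,\Gamma}}\to\J_{R\rtimes R^\times}$ for the semilattice map of Corollary~\ref{cor:CI}, so that $\iota((x+\a)\times(\a\cap R_{\m,\Gamma}))=(x+\a)\times\a^\times$ and $\iota(\emptyset)=\emptyset$. Since $\iota$ preserves intersections and sends $\emptyset$ to $\emptyset$, the projections $\{e_{\iota(X)} : X\in\J_{P_{\m,\Gamma}}\}$ in $C_u^*(\J_{R\rtimes R^\times})$ satisfy $e_{\iota(\emptyset)}=0$ and $e_{\iota(X)}e_{\iota(Y)}=e_{\iota(X\cap Y)}$; hence the universal property of $C_u^*(\J_{P_{\m,\Gamma}})$ yields a $*$-homomorphism $\phi$ with $\phi(e_X)=e_{\iota(X)}$, which is the asserted map.

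For injectivity, I would use that both $P_{\m,\Gamma}$ and $R\rtimes R^\times$ satisfy the independence condition (Proposition~\ref{prop:CI}), so that by \cite[Proposition~2.24]{Li2} the canonical maps $C_u^*(\J_{P_{\m,\Gamma}})\to D_\lambda(P_{\m,\Gamma})$ and $C_u^*(\J_{R\rtimes R^\times})\to D_\lambda(R\rtimes R^\times)$, $e_X\mapsto E_X$, are isomorphisms. Composing $\phi$ with the latter, I regard $\phi$ as the map $e_X\mapsto E_{\iota(X)}$ into the diagonal $D_\lambda(R\rtimes R^\times)\subseteq\ell^\infty(R\rtimes R^\times)$. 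Since $P_{\m,\Gamma}$ is a subsemigroup of $R\rtimes R^\times$, restriction of functions $\Theta\colon\ell^\infty(R\rtimes R^\times)\to\ell^\infty(P_{\m,\Gamma})$, $f\mapsto f|_{P_{\m,\Gamma}}$, is a unital $*$-homomorphism carrying each diagonal projection $E_Y$ to $E_{Y\cap P_{\m,\Gamma}}$. The key point is the set-theoretic identity
\[
\iota(X)\cap P_{\m,\Gamma}=[(x+\a)\times\a^\times]\cap(R\rtimes R_{\m,\Gamma})=(x+\a)\times(\a\cap R_{\m,\Gamma})=X
\]
for $X=(x+\a)\times(\a\cap R_{\m,\Gamma})$, using that $\a\cap R_{\m,\Gamma}=\a^\times\cap R_{\m,\Gamma}$ because $0\notin R_{\m,\Gamma}$. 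Consequently $\Theta\circ\phi$ carries $e_X$ to $E_X\in D_\lambda(P_{\m,\Gamma})$, so $\Theta\circ\phi$ agrees on the spanning projections with the canonical isomorphism $C_u^*(\J_{P_{\m,\Gamma}})\xrightarrow{\sim}D_\lambda(P_{\m,\Gamma})$; being two $*$-homomorphisms that agree on a generating set, they coincide. As $\Theta\circ\phi$ is then injective, so is $\phi$.

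Finally, for equivariance, let $\beta$ denote the restriction to $P_{\m,\Gamma}\subseteq R\rtimes R^\times$ of the canonical $R\rtimes R^\times$-action on $C_u^*(\J_{R\rtimes R^\times})$, so that $\beta_p(e_Y)=e_{pY}$, and recall $\alpha_p(e_X)=e_{pX}$. Checking $\phi\circ\alpha_p=\beta_p\circ\phi$ on generators reduces to the identity $\iota(pX)=p\iota(X)$. For $p=(b,a)\in P_{\m,\Gamma}$ and $X=(x+\a)\times(\a\cap R_{\m,\Gamma})$, the computation in the proof of Proposition~\ref{prop:CI} gives $pX=(b+ax+a\a)\times(a\a\cap R_{\m,\Gamma})$ with $a\a\in\I_\m^+$, whence $\iota(pX)=(b+ax+a\a)\times(a\a)^\times$; on the other hand $p\iota(X)=(b,a)[(x+\a)\times\a^\times]=(b+ax+a\a)\times(a\a)^\times$ since $a\a^\times=(a\a)^\times$. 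Thus $\iota$ is $P_{\m,\Gamma}$-equivariant and the equivariance of $\phi$ follows.

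I expect the injectivity to be the main obstacle: the substance is not the construction of $\phi$ but the observation that, under the independence condition, both algebras are the concrete diagonals and that restriction of functions to the subsemigroup $P_{\m,\Gamma}$ is a $*$-homomorphism inverting $\phi$ on the nose. Everything else is a direct verification on the generating projections.
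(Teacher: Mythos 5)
Your proof is correct, and its substantive part---injectivity---goes by a genuinely different mechanism than the paper's. The paper handles the proposition in one line: existence from the universal property, equivariance declared obvious, and injectivity by combining Proposition~\ref{prop:CI} with the abstract faithfulness criterion of \cite[Proposition~5.6.21]{CELY}; unwound, that route checks that the defect projections $\prod_{i=1}^m\bigl(e_{\iota(X)}-e_{\iota(X_i)}\bigr)$, for $X_i\subsetneq X$ in $\J_{P_{\m,\Gamma}}$, are nonzero in $C_u^*(\J_{R\rtimes R^\times})$, which holds because $\J_{R\rtimes R^\times}$ is independent and $\iota$ reflects proper inclusions. You instead build an explicit one-sided inverse: identify the semilattice algebras with the concrete diagonals via independence and \cite[Proposition~2.24]{Li2}, and observe that restriction of $\ell^\infty$-functions along $P_{\m,\Gamma}\subseteq R\rtimes R^\times$ is a $*$-homomorphism which, thanks to the identity $\iota(X)\cap P_{\m,\Gamma}=X$, splits $\phi$ on the generating projections and hence everywhere. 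Both routes rest on Proposition~\ref{prop:CI}, but yours has two small advantages: the identity $\iota(X)\cap P_{\m,\Gamma}=X$ makes the reason for injectivity completely transparent (it also re-proves injectivity of $\iota$ itself), and your argument actually needs only independence of $\J_{P_{\m,\Gamma}}$, not of $\J_{R\rtimes R^\times}$---the map $C_u^*(\J_{R\rtimes R^\times})\to D_\lambda(R\rtimes R^\times)$, $e_Y\mapsto E_Y$, exists by universality whether or not it is an isomorphism, and your composite $\Theta\circ(\cdot)\circ\phi$ equals the canonical map $C_u^*(\J_{P_{\m,\Gamma}})\to D_\lambda(P_{\m,\Gamma})$, whose injectivity is exactly the independence of the small semilattice. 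The paper's proof is shorter by citation; yours is self-contained and marginally more economical in hypotheses. Your equivariance computation $\iota(pX)=p\iota(X)$, including the identification $a\a\cap R_{\m,\Gamma}=a(\a\cap R_{\m,\Gamma})$, is precisely the verification the paper dismisses as obvious.
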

\begin{proof}
Existence of such a *-homomorphism follows immediately from the universal property of $C_u^*(\J_{P_{\m,\Gamma}})$. Equivariance is obvious, and injectivity follows Proposition~\ref{prop:CI} and \cite[Proposition~5.6.21]{CELY}.
\end{proof}

To avoid cumbersome notation, we will often identify $C_u^*(\J_{P_{\m,\Gamma}})$ with its image in $C_u^*(\J_{R\rtimes R^\times})$ under the canonical inclusion from Proposition~\ref{prop:inclusionofsemilattices}. Thus, we will write $e_{(x+\a)\times\a^\times}$  rather than $e_{(x+\a)\times(\a\cap R_{\m,\Gamma})}$ for a canonical spanning projection of $C_u^*(\J_{P_{\m,\Gamma}})$.

Our next result gives a presentation for $C^*(P_{\m,\Gamma})$ that is, for the particular case of trivial $\m$, entirely analogous to the presentation given in \cite[Definition~2.1]{CDL}, see also \cite[Section~2.4]{Li2}.

\begin{proposition}\label{prop:CDLfamily}
For $x\in R$, let  $u^x:=v_{(x,1)}$, for $a\in R_{\m,\Gamma}$, let $s_a:=v_{(0,a)}$, and for $\a\in\I_\m^+$, let $e_\a:=e_{\a\times(\a\cap R_{\m,\Gamma})}$. Then:
\begin{enumerate}
\item[\textup{(Ta)}] The $u^x$ are unitary and satisfy $u^xu^y=u^{x+y}$, the $s_a$ are isometries and satisfy $s_as_b=s_{ab}$. Moreover, $s_au^x=u^{ax}s_a$ for all $x,y\in R$ and $a,b\in R_{\m,\Gamma}$.
\item[\textup{(Tb)}] The $e_\a$ are projections and satisfy $e_\a e_\b=e_{\a\cap \b}$, $e_R=1$.
\item[\textup{(Tc)}] We have $s_ae_\b s_a^*=e_{a\b}$. 
\item[\textup{(Td)}] For $\a\in\I_\m^+$, $\begin{cases}
u^xe_\a=e_\a u^x & \text{ for } x\in\a, \text{ and} \\
e_\a u^x e_\a=0 & \text{ for } x\not\in\a.
\end{cases}$
\end{enumerate}
Moreover, $C^*(P_{\m,\Gamma})$ is universal in the following sense: if $B$ is a C*-algebra containing elements $U^x$ for $x\in R$, $S_a$ for $a\in R_{\m,\Gamma}$, and $E_\a$ for $\a\in \I_\m^+$ satisfying the obvious ``uppercase'' analogues of \textup{(Ta)}--\textup{(Td)}, then there is a unique *-homomorphism $C^*(P_{\m,\Gamma})\to B$ such that $u^x\mapsto U^x$, $s_a\mapsto S_a$, and $e_\a\mapsto E_\a$.
\end{proposition}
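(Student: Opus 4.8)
The plan is to establish Proposition~\ref{prop:CDLfamily} in two stages: first verify that the concrete generators $u^x$, $s_a$, $e_\a$ inside $C^*(P_{\m,\Gamma})=C_u^*(\J_{P_{\m,\Gamma}})\rtimes_\alpha P_{\m,\Gamma}$ satisfy relations (Ta)--(Td), and then prove the universal property by constructing, from an abstract family $\{U^x,S_a,E_\a\}$ in a C*-algebra $B$, a covariant representation of the semigroup dynamical system $(C_u^*(\J_{P_{\m,\Gamma}}),P_{\m,\Gamma},\alpha)$, so that the universal property of the crossed product (as recalled just before Proposition~\ref{prop:full=reduced}) furnishes the desired $*$-homomorphism.

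For the first stage, I would unwind the definitions $u^x=v_{(x,1)}$, $s_a=v_{(0,a)}$, and $e_\a=e_{\a\times(\a\cap R_{\m,\Gamma})}$ and read off each relation from the semigroup law $(b,a)(d,c)=(b+ad,ac)$ together with relations~\ref{relation:(I)} and~\ref{relation:(II)} for the crossed product. For instance, $u^xu^y=v_{(x,1)}v_{(y,1)}=v_{(x+y,1)}=u^{x+y}$ gives the additive relation, each $u^x$ is unitary because $(x,1)$ is invertible in the enveloping group with inverse $(-x,1)$, and $s_as_b=v_{(0,a)}v_{(0,b)}=v_{(0,ab)}=s_{ab}$; the commutation $s_au^x=u^{ax}s_a$ follows from $(0,a)(x,1)=(ax,a)=(ax,1)(0,a)$. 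The projection relations (Tb) and the covariance-type relation (Tc) come directly from $e_\a e_\b=e_{(\a\times\dots)\cap(\b\times\dots)}$ (using Proposition~\ref{prop:CI} to identify the intersection with $e_{\a\cap\b}$) and from $v_{(0,a)}e_X v_{(0,a)}^*=e_{(0,a)X}$ applied to $X=\b\times(\b\cap R_{\m,\Gamma})$. The two-case relation (Td) I would verify by computing $u^x e_\a (u^x)^*=e_{(x,1)(\a\times\dots)}=e_{(x+\a)\times(\a\cap R_{\m,\Gamma})}$ and comparing with $e_\a$: when $x\in\a$ the shifted ideal coincides with $\a$ so the two projections agree (giving commutation), and when $x\notin\a$ the cosets $x+\a$ and $\a$ are disjoint, forcing $e_\a u^x e_\a=0$.

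For the universal property, given abstract $\{U^x,S_a,E_\a\}\subseteq B$ satisfying the uppercase relations, the main task is to build a $*$-homomorphism $\pi\colon C_u^*(\J_{P_{\m,\Gamma}})\to B$ and an isometric semigroup homomorphism $V\colon P_{\m,\Gamma}\to B$ forming a covariant pair. I would set $V_{(b,a)}:=U^b S_a$ and check this is a semigroup homomorphism using $S_a U^x=U^{ax}S_a$ (the uppercase (Ta) commutation), so that $V_{(b,a)}V_{(d,c)}=U^bS_aU^dS_c=U^bU^{ad}S_aS_c=U^{b+ad}S_{ac}=V_{(b,a)(d,c)}$. To define $\pi$ on the full semilattice $C_u^*(\J_{P_{\m,\Gamma}})$, whose general spanning projection is $e_{(x+\a)\times(\a\cap R_{\m,\Gamma})}$, I would set $\pi(e_{(x+\a)\times\dots}):=U^x E_\a (U^x)^*$ and invoke the universal property of $C_u^*(\J_{P_{\m,\Gamma}})$ recalled in Section~\ref{subsec:presentation}, which requires only that these images be projections multiplying according to the intersection rule in $\J_{P_{\m,\Gamma}}$; that multiplicativity is exactly the content of the uppercase (Tb) and (Td) relations, matching the semilattice structure computed in Proposition~\ref{prop:CI}. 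Finally I would verify the covariance identity $V_p\,\pi(e_X)\,V_p^*=\pi(e_{pX})$ on generators using uppercase (Tc) and the commutation relations, and then $\pi\times V$ is the required map, which is unique since the $u^x,s_a,e_\a$ generate $C^*(P_{\m,\Gamma})$.

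The main obstacle I anticipate is not any single relation but the bookkeeping needed to confirm that $\pi$ is well-defined on \emph{all} of $\J_{P_{\m,\Gamma}}$ from data indexed only by the subfamily $\{e_\a:\a\in\I_\m^+\}$: one must show that every spanning projection $e_{(x+\a)\times(\a\cap R_{\m,\Gamma})}$ is indeed expressible as $U^xE_\a(U^x)^*$ consistently, i.e.\ that the assignment descends to a genuine representation of the semilattice, and that the intersection relation $e_Xe_Y=e_{X\cap Y}$ is respected even when $X\cap Y=\emptyset$ (forcing a product of two such projections to vanish). This reduces to the coset-intersection computation in the proof of Proposition~\ref{prop:CI}, namely that $(x+\a)\cap(y+\b)$ is either empty or a coset of $\a\cap\b$; translating that combinatorial fact into the orthogonality $U^xE_\a(U^x)^*\,U^yE_\b(U^y)^*=0$ when the cosets are disjoint is where the uppercase (Td) relation does the essential work, and verifying it carefully is the crux of the argument.
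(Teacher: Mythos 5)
Your proposal is correct and takes essentially the same approach as the paper: the paper also verifies (Ta)--(Td) by direct computation (citing the analogous calculation in Li's Section~2.4) and proves universality by setting $V_{(x,a)}:=U^xS_a$ and $E_{x+\a}:=U^xE_\a U^{-x}$ and checking the defining relations (I) and (II) for $C^*(P_{\m,\Gamma})$, which is exactly your covariant-representation formulation. The disjoint-coset orthogonality you identify as the crux is likewise left as ``a calculation'' in the paper; for the record, it follows from the uppercase (Tb) and (Td) applied to the sum ideal $\a+\b$ (which lies in $\I_\m^+$ because it contains $\a$): if $w\notin\a+\b$, then $E_\a U^w E_\b=(E_\a E_{\a+\b})U^w(E_{\a+\b}E_\b)=E_\a\bigl(E_{\a+\b}U^wE_{\a+\b}\bigr)E_\b=0$.
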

\begin{proof}
A calculation analogous to that given in \cite[Section~2.4]{Li2} shows that the relations \textup{(Ta)}--\textup{(Td)} are satisfied. 

If $\{U^x :x\in R\}$, $\{S_a : a\in R_{\m,\Gamma}\}$, and $\{E_\a : \a\in\I_\m^+\}$ are elements in a C*-algebra $B$ satisfying ``uppercase'' analogues of \textup{(Ta)}--\textup{(Td)}, let $V_{(x,a)}:=U^xS_a$ and $E_{x+\a}:=U^xE_\a U^{-x}$ for $x\in R$, $a\in R_{\m,\Gamma}$, and $\a\in\I_\m^+$. A calculation verifies that these elements satisfying the defining relations \ref{relation:(I)} and \ref{relation:(II)} for $C^*(P_{\m,\Gamma})$, so the universal property of $C^*(P_{\m,\Gamma})$ gives us a *-homomorphism $C^*(P_{\m,\Gamma})\to B$ such that $v_{(x,a)}\mapsto V_{(x,a)}$ and $e_{(x+\a)\times \a^\times}\mapsto E_{x+\a}$.
\end{proof}

\section{Description as a full corner in a crossed product.}\label{subsec:gpoidmodel}

We will now describe $C^*(R\rtimes R_{\m,\Gamma})$ as a full corner in a crossed product, and thus also as a groupoid C*-algebra. Since $R\rtimes R_{\m,\Gamma}$ is left Ore by Proposition~\ref{prop:ore}, this could be derived from \cite[Theorem 2.1.1]{La}. However, for the present setting, the results from \cite[Section 4]{Li3} give us a concrete realization of the ``dilated system'' which will be more convenient for our purposes.

\subsection{The Toeplitz condition}
Let $P$ be a subsemigroup of a group $G$ as in Section~\ref{subsec:semigpC*}. Let $\lambda^G$ denote the left regular representation of $G$ on $\ell^2(G)$, and for each subset $Y\subseteq G$, let $E_Y\in\mathcal{B}(\ell^2(G))$ be the corresponding multiplication operator, that is, $E_Y$ is the orthogonal projection onto $\ell^2(Y)\subseteq \ell^2(G)$.
Let $\J_{P\subseteq G}$ be the smallest collection of subsets of $G$ that contains $\J_P$, is closed under left translation by elements in $G$, and is closed under finite intersections. Let $D_{P\subseteq G}:=\overline{\spn}(\{E_X : X\in \J_{P\subseteq G}\}).$
Then $D_{P\subseteq G}$ is a sub-C*-algebra of $\ell^\infty(G)$, and, as explained in \cite[Section~2.5]{CEL1}, we can identify $D_{P\subseteq G}\rtimes_r G$ with the sub-C*-algebra of $\mathcal{B}(\ell^2(G))$ given by $\overline{\spn}(\{E_Y\lambda_g^G : Y\in \J_{P\subseteq G}, g\in G\})).$
By \cite[Lemma~3.8]{Li3}, the projection $E_P$ is full in $D_{P\subseteq G}\rtimes_r G$. We always have the containment $C_\lambda^*(P)\subseteq E_P(D_{P\subseteq G}\rtimes_r G)E_P$, where we view $C_\lambda^*(P)$ as a sub-C*-algebra of $\mathcal{B}(\ell^2(G))$. The reverse containment need not hold in general.
By \cite[Definition~4.1]{Li3}, the inclusion $P\subseteq G$ satisfies the \emph{left Toeplitz condition} provided that for each $g\in G$, the compression $E_P\lambda_g^GE_P$ of $\lambda_g^G$ by $E_P$ is either zero or of the form $E_P\lambda_g^GE_P=\lambda_{p_1}^*\lambda_{q_1}\cdots\lambda_{p_n}^*\lambda_{q_n}$ for some $p_1,q_1,...,p_n,q_n\in P$. If $P\subseteq G$ satisfies the left Toeplitz condition, then \cite[Lemmas~3.9]{Li3} guarantees that $C_\lambda^*(P)=E_P(D_{P\subseteq G}\rtimes_r G)E_P$.

Now assume that $P\subseteq G$ satisfies the left Toeplitz condition, and let $\Omega_{P\subseteq G}:=\Spec(D_{P\subseteq G})$. By \cite[Lemma~4.2(i)]{Li3}, we have $D_P=E_PD_{P\subseteq G}E_P$, so there is a canonical inclusion $\Omega_P\subseteq\Omega_{P\subseteq G}$, and $E_P(D_{P\subseteq G}\rtimes_r G)E_P\cong 1_{\Omega_P}(C_0(\Omega_{P\subseteq G})\rtimes_r G)1_{\Omega_P}$. 
We also have 
\[
1_{\Omega_P}(C_0(\Omega_{P\subseteq G}\rtimes_r G))1_{\Omega_P}\cong C_r^*(G\ltimes\Omega_P)
\] 
where $G\ltimes\Omega_P:=\{(g,w)\in G\times\Omega_P : gw\in\Omega_P\}$ is the reduction of the transformation groupoid $G\ltimes\Omega_{P\subseteq G}$ by the compact open set $\Omega_P$. Our notation for the reduction groupoid is justified by the fact that the groupoid $G\ltimes\Omega_P$ can be canonically identified with the transformation groupoid for a canonical \emph{partial action} of $G$ on $\Omega_P$, see \cite[Section~2]{Li7}. 

We now return to the case of $R\rtimes R_{\m,\Gamma}\subseteq (R_\m^{-1}R)\rtimes K_{\m,\Gamma}$. To avoid cumbersome notation, we let $P_{\m,\Gamma}:=R\rtimes R_{\m,\Gamma}$ and $G_{\m,\Gamma}:=(R_\m^{-1}R)\rtimes K_{\m,\Gamma}$.  Since $P_{\m,\Gamma}$ is left Ore by Proposition~\ref{prop:ore}, the inclusion $P_{\m,\Gamma}\subseteq G_{\m,\Gamma}$ satisfies the left Toeplitz condition by \cite[Section~8.3]{Li3}. 
From the discussion above, we have isomorphisms
\[
C_\lambda^*(P_{\m,\Gamma})\cong 1_{\Omega_{P_{\m,\Gamma}}}(C_0(\Omega_{P_{\m,\Gamma}\subseteq G_{\m,\Gamma}})\rtimes_r G_{\m,\Gamma})1_{\Omega_{P_{\m,\Gamma}}}\cong C_r^*(G_{\m,\Gamma}\ltimes \Omega_{P_{\m,\Gamma}}).
\]

Our aim now is to describe the diagonal sub-C*-algebra $D_{P_{\m,\Gamma}\subseteq G_{\m,\Gamma}}\cong C_0(\Omega_{P_{\m,\Gamma}\subseteq G_{\m,\Gamma}})$.

\begin{proposition}\label{prop:cosets}
We have $\J_{P_{\m,\Gamma}\subseteq G_{\m,\Gamma}}=\{(x+\a)\times\a^\times : x\in K, \a\in\I_\m\}\cup\{\emptyset\}$.
\end{proposition}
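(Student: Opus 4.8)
The plan is to compute $\J_{P_{\m,\Gamma}\subseteq G_{\m,\Gamma}}$ directly from its defining properties: it is the smallest collection of subsets of $G_{\m,\Gamma}$ that contains $\J_{P_{\m,\Gamma}}$, is closed under left translation by $G_{\m,\Gamma}$, and is closed under finite intersection. Write $\tilde{\J}:=\{(x+\a)\times\a^\times : x\in K,\,\a\in\I_\m\}\cup\{\emptyset\}$. First I would verify that $\tilde{\J}$ contains $\J_{P_{\m,\Gamma}}$: by Proposition~\ref{prop:CI}, the members of $\J_{P_{\m,\Gamma}}$ are precisely the sets $(x+\a)\times(\a\cap R_{\m,\Gamma})$ with $x\in R$ and $\a\in\I_\m^+$, together with $\emptyset$. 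The point here is that inside the larger group $G_{\m,\Gamma}$, the set $(x+\a)\times(\a\cap R_{\m,\Gamma})$ is the intersection of the ``full'' coset $(x+\a)\times\a^\times$ with $P_{\m,\Gamma}=R\rtimes R_{\m,\Gamma}$; so the embedding of $\J_{P_{\m,\Gamma}}$ into subsets of $G_{\m,\Gamma}$ sends each constructible right ideal to a set of the form appearing in $\tilde{\J}$ (with $\a$ integral). This step is essentially a bookkeeping matter once one is careful about whether one views these ideals inside $P_{\m,\Gamma}$ or inside $G_{\m,\Gamma}$.

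Next I would show that $\tilde{\J}$ is closed under the two required operations, which proves $\J_{P_{\m,\Gamma}\subseteq G_{\m,\Gamma}}\subseteq\tilde{\J}$. For left translation, given $(b,a)\in G_{\m,\Gamma}=(R_\m^{-1}R)\rtimes K_{\m,\Gamma}$, a direct computation of the semi-direct product action gives $(b,a)\cdot\big((x+\a)\times\a^\times\big)=(b+ax+a\a)\times(a\a)^\times$, and since $a\in K_{\m,\Gamma}$ we have $a\a\in\I_\m$, so the result again lies in $\tilde{\J}$. For intersections, I would use the same dichotomy as in the proof of Proposition~\ref{prop:CI}: the intersection of two cosets $(x+\a)\cap(y+\b)$ is either empty or a coset $z+(\a\cap\b)$, and $\a\cap\b\in\I_\m$ because $\I_\m^+\sqcup\{\emptyset\}$ is a semilattice under intersection and the gcd/intersection of two fractional ideals coprime to $\m_0$ is again coprime to $\m_0$. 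The multiplicative components intersect compatibly, so $\tilde{\J}$ is closed under intersection. Together with $\emptyset,\,G_{\m,\Gamma}\in\tilde{\J}$ (the latter coming from $x=0$, $\a=R$), this shows $\tilde{\J}$ is a candidate collection, hence contains the smallest one.

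For the reverse inclusion $\tilde{\J}\subseteq\J_{P_{\m,\Gamma}\subseteq G_{\m,\Gamma}}$, I would exhibit each $(x+\a)\times\a^\times$ with $x\in K$ and $\a\in\I_\m$ as a left translate (followed possibly by an intersection) of a member of $\J_{P_{\m,\Gamma}}$. The strategy is: given arbitrary $x\in K$ and $\a\in\I_\m$, choose $c\in R_{\m,\Gamma}$ (or $c\in K_{\m,\Gamma}$) clearing denominators so that $c\a\in\I_\m^+$ is integral and $cx\in R$; then the integral set $(cx+c\a)\times(c\a\cap R_{\m,\Gamma})$ lies in $\J_{P_{\m,\Gamma}}$, and translating by $(0,c)^{-1}=(0,c^{-1})\in G_{\m,\Gamma}$ recovers $(x+\a)\times\a^\times$. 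Here I would lean on Lemma~\ref{prop:onto} and Proposition~\ref{prop:gpofquotients} to produce suitable elements of $R_{\m,\Gamma}$ with prescribed residues, exactly as in the denominator-clearing arguments in Lemma~\ref{lem:cutdown} and the proof of Proposition~\ref{prop:CI}.

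The main obstacle I anticipate is the reverse inclusion, specifically the bookkeeping needed to pass from a \emph{fractional} ideal $\a\in\I_\m$ and an arbitrary $x\in K$ back to integral data in $P_{\m,\Gamma}$ while keeping track of the multiplicative second coordinate: one must ensure that the clearing element $c$ can be chosen in $R_{\m,\Gamma}$ (not merely in $R_\m$) so that the translation stays inside the group $G_{\m,\Gamma}=(R_\m^{-1}R)\rtimes K_{\m,\Gamma}$, and one must verify that translating the second coordinate $\a^\times$ is compatible with the semi-direct product structure, i.e. that $c^{-1}(c\a\cap R_{\m,\Gamma})$ correctly recovers $\a^\times$ as a subset of $G_{\m,\Gamma}$ after forgetting the constraint of lying in $P_{\m,\Gamma}$. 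This is where the distinction between intersecting with $R_{\m,\Gamma}$ (inside $P_{\m,\Gamma}$) and taking the full punctured ideal $\a^\times$ (inside $G_{\m,\Gamma}$) must be handled with care, but it parallels closely the computations already carried out in the proofs of Propositions~\ref{prop:constructibleIdeals}~and~\ref{prop:CI}.
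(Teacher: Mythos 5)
Your proposal is sound and reaches the statement by a genuinely different route than the paper, in both directions. For the inclusion $\J_{P_{\m,\Gamma}\subseteq G_{\m,\Gamma}}\subseteq\tilde{\J}$, the paper checks no closure properties of $\tilde{\J}$ at all: since $P_{\m,\Gamma}\subseteq G_{\m,\Gamma}$ satisfies the left Toeplitz condition (via Proposition~\ref{prop:ore} and \cite[Section~8.3]{Li3}), it invokes \cite[Lemma~4.2]{Li3} to obtain the stronger structural fact that $\J_{P_{\m,\Gamma}\subseteq G_{\m,\Gamma}}=\{gX : g\in G_{\m,\Gamma},\, X\in\J_{P_{\m,\Gamma}}\}$, and then computes a single translate. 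Your minimality argument is more elementary and self-contained (no Toeplitz machinery), at the price of verifying closure under intersections by hand; the paper's route gets the one-translate description for free and then uses it again in the reverse direction. Indeed, for ``$\supseteq$'' the paper argues differently from you: because its first step produces additive cosets $y+\a$ with $y\in R_\m^{-1}R$ only, it must show that every coset $x+\a$ with $x\in K$ has a representative in $R_\m^{-1}R$, which it does by strong approximation \cite[Theorem~6.28]{Nar} followed by an ideal-class argument. You instead exhibit $(x+\a)\times\a^\times$ directly as the translate by $(0,c^{-1})$ of an element of $\J_{P_{\m,\Gamma}}$, with $c\in R_{\m,\Gamma}$ a denominator-clearing element; such $c$ exists by Lemma~\ref{lem:approx} (note $R_{\m,1}\subseteq R_{\m,\Gamma}$), and the bookkeeping worry you flag at the end resolves affirmatively: $c^{-1}(c\a\cap R_{\m,\Gamma})=\a\cap K_{\m,\Gamma}$ because $[c]_\m\in\Gamma$ and $\Gamma$ is a group, and $\a\cap K_{\m,\Gamma}$ is exactly what ``$\a^\times$'' denotes as a subset of $G_{\m,\Gamma}$ under the paper's labelling convention (see the remark following Proposition~\ref{prop:inclusionofsemilattices}). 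Both you and the paper rely on this convention; read literally, with $\a^\times=\a\setminus\{0\}$, neither description of $\J_{P_{\m,\Gamma}\subseteq G_{\m,\Gamma}}$ would even contain $\J_{P_{\m,\Gamma}}$.

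Two small corrections. First, your parenthetical claim that $G_{\m,\Gamma}\in\tilde{\J}$ ``from $x=0$, $\a=R$'' is wrong: that choice yields $P_{\m,\Gamma}$ (its second coordinate is $R\cap K_{\m,\Gamma}=R_{\m,\Gamma}$, not $K_{\m,\Gamma}$), and in fact $G_{\m,\Gamma}$ does not lie in $\J_{P_{\m,\Gamma}\subseteq G_{\m,\Gamma}}$, since $R_\m^{-1}R$ is not a coset of a fractional ideal. Fortunately the definition of $\J_{P\subseteq G}$ does not require $G$ to be a member, so the claim is unnecessary and its failure is harmless. Second, your clearing element $c$ cannot exist when $v_\p(x)<0$ for some $\p\mid\m_0$, because every $c\in K_{\m,\Gamma}$ has $v_\p(c)=0$ at such $\p$; in that case $x+\a$ is disjoint from $R_\m^{-1}R$, so the set in question meets $G_{\m,\Gamma}$ in $\emptyset$ and there is nothing to prove, but this degenerate case should be dispatched explicitly. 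You are in good company here: the paper's own proof passes over the same case silently, as its two approximation conditions do not force $x+\a=y+\a$ when $y$ has a pole at a prime dividing $\m_0$.
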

\begin{proof}
Since $P_{\m,\Gamma}\subseteq G_{\m,\Gamma}$ is left Toeplitz, \cite[Lemma~4.2]{Li3} implies that $\J_{P_{\m,\Gamma}\subseteq G_{\m,\Gamma}}=\{gX : g\in G, X\in\J_{P_{\m,\Gamma}}\}$. Hence, $\J_{P_{\m,\Gamma}\subseteq G_{\m,\Gamma}}=\{(y+\a)\times\a^\times : y\in R_\m^{-1}R, \a\in\I_\m\}\cup\{\emptyset\}$, so the inclusion ``$\subseteq$'' holds.

To prove the reverse inclusion, let $\a\in\I_\m$ and $y\in K$. We need to find $x\in R_\m^{-1}R$ such that $x+\a=y+\a$. By strong approximation (\cite[Theorem~6.28]{Nar}), there exists $x\in K$ such that 
\begin{itemize}
\item $v_\p(x-y)\geq v_\p(\a)$ for all $\p\mid \a$;
\item $v_\p(x)\geq 0$ for all $\p\mid\m_0$.
\end{itemize}
That is, $x+\a=y+\a$ and $x$ is integral at every prime that divides $\m_0$. 
Write $xR=\b/\c$ where $\b$ and $\c$ are coprime integral ideals. Then, because $v_\p(x)\geq 0$ for all $\p\mid\m_0$, $\c$ is coprime to $\m_0$ and thus defines a class $[\c]$ in $\I_\m/i(K_\m)$; let $\mathfrak{d}$ be an integral ideal in the inverse class $[\c]^{-1}$, so that $\c\mathfrak{d}=bR$ for some $b\in R_\m$. The class of $\mathfrak{d}$ in $\Cl(K)$ coincides with the inverse of the class of $\c$ in $\Cl(K)$, and $\b$ and $\c$ are in the same ideal class in $\Cl(K)$, so there exists $a\in R$ such that $\b\mathfrak{d}=aR$. Now we have
\[
xR=\b/\c=\b\mathfrak{d}/\c\mathfrak{d}=aR/bR=(a/b)R,
\]
so $x=au/b$ for some $u\in R^*$ which shows that $x\in R_\m^{-1}R$. Since $x+\a=y+\a$, we are done.
\end{proof}

\subsection{An adelic description of the spectrum of the diagonal}
We will now describe $C(\Omega_{P_{\m,\Gamma}})$ and $C_0(\Omega_{P_{\m,\Gamma}\subseteq G_{\m,\Gamma}})$ as functions on certain adelic spaces; this is motivated by \cite[Section~1]{LN2} and \cite[Section~5]{CDL}, also see \cite[Section~2]{Li4}.

Each non-zero prime ideal $\p$ of $R$ defines a normalized \emph{absolute value $|\cdot|_\p$} on $K^\times$; explicitly, $|x|_\p:=N(\p)^{-v_\p(x)}$. We let  $K_\p$ denote the corresponding completion of $K$ and $R_\p=\{x\in K_\p : |x|_\p\leq 1\}$ the ring of integers in $K_\p$. The \emph{ring of finite adeles over $K$} is
\[
\mathbb{A}_f:=\big\{\mathbf{a}=(a_\p)_\p\in \prod_\p K_\p : a_\p\in R_\p\text{ for all but finitely many }\p\big\}.
\]
Equipped with the restricted product topology with respect to the compact open subsets $R_\p\subseteq K_\p$, $\mathbb{A}_f$ is a locally compact ring. Let $\hat{R}$ denote the compact subring $\prod_\p R_\p$ consisting of \emph{integral adeles}. We can modify this definition to work with only the primes not dividing $\m$. Let $S:=\{\p\in\P_K : \p\mid\m_0\}$ be the support of $\m_0$, and put
\[
\mathbb{A}_S:=\big\{\mathbf{a}=(a_\p)_\p\in \prod_{\p\not\in S}K_\p : a_\p\in R_\p \text{ for all but finitely many }\p\big\}.
\]
Also equip $\mathbb{A}_S$ with the restricted product topology. Denote by $\hat{R}_S$ the compact subring $\prod_{\p\not\in S}R_\p$ of $\mathbb{A}_S$, and let $\hat{R}_S^*:=\prod_{\p\not\in S}R_\p^*$ be the group of units in $\hat{R}_S$. The compact group $\hat{R}_S^*$ acts on $\mathbb{A}_S$ by multiplication, and we will let $\bar{\mathbf{a}}$ denote the image of $\mathbf{a}\in\mathbb{A}_S$ under the quotient map $\mathbb{A}_S\to \mathbb{A}_S/\hat{R}_S^*$.
There is a diagonal embedding of additive groups $K\hookrightarrow \mathbb{A}_S$, so $K$ acts on $\mathbb{A}_S$ by translation. Moreover, the image of $K_{\m,\Gamma}$ under this embedding is contained in the multiplicative group $\mathbb{A}_S^*$ of units in $\mathbb{A}_S$, so $K_{\m,\Gamma}$ acts on $\mathbb{A}_S$ by multiplication. This action descends to an action of $K_{\m,\Gamma}$ on the quotient $\mathbb{A}_S/\hat{R}_S^*$ given by $k\bar{\mathbf{a}}=\overline{k\mathbf{\a}}$. Hence, the locally compact space $\mathbb{A}_S\times\mathbb{A}_S/\hat{R}_S^*$ carries a canonical action of $G_{\m,\Gamma}$ given by $(n,k)(\mathbf{b},\bar{\mathbf{a}})=(n+k\mathbf{b},k\bar{\mathbf{a}})$.

\begin{remark}
The space $\hat{R}_S/\hat{R}_S^*$ can be canonically identified with $\prod_{\p\notin S} \p^{\mathbb{N}\cup\{\infty\}}$, which may be thought of as the space of ``super ideals coprime to $\m_0$'', and we can identify $\I_\m^+$ with its canonical image in $\hat{R}_S/\hat{R}_S^*$ via $\a\mapsto \prod_\p\p^{v_\p(\a)}$. Similarly, $\mathbb{A}_S/\hat{R}_S^*$ may be thought of as the space of ``super fractional ideals coprime to $\m_0$''.
\end{remark}

We define an equivalence relation on $\mathbb{A}_S\times\mathbb{A}_S/\hat{R}_S^*$ by $(\mathbf{b},\bar{\mathbf{a}})\sim(\mathbf{d},\bar{\mathbf{c}})$ if $\bar{\mathbf{a}}=\bar{\mathbf{c}}$ and $\mathbf{b}-\mathbf{d}\in \bar{\mathbf{a}}\hat{R}_S$. The action of $G_{\m,\Gamma}$ descends to a well-defined action on the locally compact quotient space 
\[
\Omega_K^\m:=(\mathbb{A}_S\times\mathbb{A}_S/\hat{R}_S^*)/\sim.
\]
This equivalence relation restricts to an equivalence relation on the compact subset $\hat{R}_S\times\hat{R}_S/\hat{R}_S^*\subseteq \mathbb{A}_S\times\mathbb{A}_S/\hat{R}_S^*$, and the quotient space 
\[
\Omega_R^\m:=(\hat{R}_S\times\hat{R}_S/\hat{R}_S^*)/\sim
\] 
is a compact subset of $\Omega_K^\m$. 

\begin{proposition}\label{prop:adelicdescription}
There are $G_{\m,\Gamma}$-equivariant isomorphisms $D_{P_{\m,\Gamma}}\cong C(\Omega_R^\m)$ and $D_{P_{\m,\Gamma}\subseteq G_{\m,\Gamma}}\cong C_0(\Omega_K^\m)$ such that the following diagram commutes
\[\xymatrix{D_{P_{\m,\Gamma}}\ar[d]^\cong \ar@{^{(}->}[r] & D_{P_{\m,\Gamma}\subseteq G_{\m,\Gamma}}\ar[d]^{\cong}\\
C(\Omega_R^\m) \ar@{^{(}->}[r] & C_0(\Omega_K^\m)}
\]
where the horizontal arrows are the canonical inclusions, and the vertical arrows are determined by 
\[
e_{(x+\a)\times\a^\times}\mapsto 1_{\{[\mathbf{b},\bar{\mathbf{a}}] : v_\p(\bar{\mathbf{a}})\geq v_\p(\a)\text{ and } v_\p(\mathbf{b}-x)\geq v_\p(\a) \text{ for all }\p\notin S\}}.
\]
\end{proposition}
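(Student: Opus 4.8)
The plan is to define the two vertical maps directly on the canonical spanning projections and then identify their spectra with the adelic spaces. Since $P_{\m,\Gamma}$ satisfies the independence condition by Proposition~\ref{prop:CI}, the diagonal $D_{P_{\m,\Gamma}}$ is the universal semilattice C*-algebra $C_u^*(\J_{P_{\m,\Gamma}})$, and $D_{P_{\m,\Gamma}\subseteq G_{\m,\Gamma}}=\overline{\spn}\{E_X : X\in\J_{P_{\m,\Gamma}\subseteq G_{\m,\Gamma}}\}$ with $\J_{P_{\m,\Gamma}\subseteq G_{\m,\Gamma}}$ as computed in Proposition~\ref{prop:cosets}. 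Writing $Z(x,\a)$ for the compact-open subset of $\Omega_K^\m$ on the right-hand side of the displayed formula, it therefore suffices to show that the indicator functions $1_{Z(x,\a)}$ obey the defining semilattice relations and that the induced $*$-homomorphisms are bijective.

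First I would record the intersection rule for the cylinder sets. Unwinding the definition of $\Omega_K^\m$, a class $[\mathbf{b},\bar{\mathbf{a}}]$ lies in $Z(x,\a)$ exactly when $v_\p(\bar{\mathbf{a}})\geq v_\p(\a)$ and $v_\p(\mathbf{b}-x)\geq v_\p(\a)$ for all $\p\notin S$; this is well-defined on $\sim$-classes precisely because $\mathbf{b}$ is remembered modulo $\bar{\mathbf{a}}$. A prime-by-prime comparison then shows that $Z(x,\a)\cap Z(y,\b)$ is empty unless $v_\p(x-y)\geq\min\{v_\p(\a),v_\p(\b)\}$ for every $\p$, in which case it equals $Z(z,\a\cap\b)$ for any $z\in(x+\a)\cap(y+\b)$, where $v_\p(\a\cap\b)=\max\{v_\p(\a),v_\p(\b)\}$. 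This matches the intersection of constructible ideals from Propositions~\ref{prop:CI}~and~\ref{prop:cosets}; together with $Z(0,R)=\Omega_R^\m$ and the empty coset mapping to $\emptyset$, the relations presenting $C_u^*(\J)$ hold, so the universal property yields $*$-homomorphisms $\phi_R\colon D_{P_{\m,\Gamma}}\to C(\Omega_R^\m)$ and $\phi_K\colon D_{P_{\m,\Gamma}\subseteq G_{\m,\Gamma}}\to C_0(\Omega_K^\m)$ for which the square commutes by construction.

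For bijectivity I would argue at the level of spectra. The sets $Z(x,\a)$ are compact-open, closed under finite intersection, and, by the strong approximation argument used in Proposition~\ref{prop:cosets} (via \cite[Theorem~6.28]{Nar}), form a basis separating the points of the totally disconnected spaces $\Omega_R^\m$ and $\Omega_K^\m$; hence the span of their indicators is a self-adjoint subalgebra to which Stone--Weierstrass applies, giving surjectivity of $\phi_R$ and $\phi_K$. Injectivity is the crux: dualizing, I must show that the induced continuous map $\Omega_R^\m\to\Spec(D_{P_{\m,\Gamma}})$ is onto, i.e.\ that every character of the diagonal is evaluation at an adelic point. Given a character $\chi$, the filter $\{X : \chi(E_X)=1\}$ determines, for each $\p\notin S$, a valuation $n_\p:=\sup\{v_\p(\a)\}\in\mathbb{N}\cup\{\infty\}$ and a compatible family of residues of $\mathbf{b}$ modulo $\p^{n_\p}$; independence of $\J$ ensures these data are mutually consistent and nonvacuous, and assembling them produces a class $[\mathbf{b},\bar{\mathbf{a}}]\in\Omega_R^\m$ with $\chi=\mathrm{ev}_{[\mathbf{b},\bar{\mathbf{a}}]}$. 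As $\Omega_R^\m$ is compact and $\Spec(D_{P_{\m,\Gamma}})$ Hausdorff, the continuous bijection is a homeomorphism and $\phi_R$ is an isomorphism; the locally compact statement for $\phi_K$ follows either by the same reconstruction over $\mathbb{A}_S$ in place of $\hat{R}_S$, or by compressing $\phi_K$ to the full projection $E_{P_{\m,\Gamma}}$, which corresponds to $1_{\Omega_R^\m}$, using $D_{P_{\m,\Gamma}}=E_{P_{\m,\Gamma}}D_{P_{\m,\Gamma}\subseteq G_{\m,\Gamma}}E_{P_{\m,\Gamma}}$ from \cite[Lemma~4.2]{Li3}.

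Finally, $G_{\m,\Gamma}$-equivariance is checked on generators: left translation by $(n,k)$ sends $(x+\a)\times\a^\times$ to $(n+kx+k\a)\times(k\a)^\times$, while the action $(n,k)(\mathbf{b},\bar{\mathbf{a}})=(n+k\mathbf{b},k\bar{\mathbf{a}})$ on $\Omega_K^\m$ sends $Z(x,\a)$ to $Z(n+kx,k\a)$; comparing shows $\phi_K$ intertwines the two actions, and restriction to $\Omega_R^\m$ handles $\phi_R$. I expect the injectivity step to be the main obstacle, since one must verify that the valuation and residue data extracted from an abstract character are compatible across all primes $\p\notin S$; this is exactly where independence and strong approximation are indispensable, and where the equivalence relation $\sim$ must be seen to collapse precisely the redundancy in the $\mathbf{b}$-coordinate so that distinct points remain separated.
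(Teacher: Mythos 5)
Your outline is essentially the argument the paper delegates to \cite[Section~2]{Li4}: verify the semilattice relations for the cylinder sets, get surjectivity from Stone--Weierstrass, get injectivity by reconstructing an adelic point from the filter $\{X : \chi(E_X)=1\}$ of a character, and check equivariance on generators. However, there is a genuine gap in the construction of $\phi_K$. The algebra $D_{P_{\m,\Gamma}\subseteq G_{\m,\Gamma}}$ is \emph{defined} concretely as $\overline{\spn}\{E_X : X\in\J_{P_{\m,\Gamma}\subseteq G_{\m,\Gamma}}\}\subseteq\ell^\infty(G_{\m,\Gamma})$; it has no universal property until one is proved. Checking that the indicators $1_{Z(x,\a)}$ satisfy the semilattice relations only yields a $*$-homomorphism out of the universal algebra $C_u^*(\J_{P_{\m,\Gamma}\subseteq G_{\m,\Gamma}})$, and descending it to $D_{P_{\m,\Gamma}\subseteq G_{\m,\Gamma}}$ requires the canonical surjection $C_u^*(\J_{P_{\m,\Gamma}\subseteq G_{\m,\Gamma}})\to D_{P_{\m,\Gamma}\subseteq G_{\m,\Gamma}}$ to be injective, i.e.\ independence of $\J_{P_{\m,\Gamma}\subseteq G_{\m,\Gamma}}$ (cf.\ \cite[Proposition~5.6.21]{CELY}). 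Proposition~\ref{prop:CI} gives this only for $\J_{P_{\m,\Gamma}}$; you invoke it correctly for $\phi_R$ and then silently assume the same privilege at the level of $G_{\m,\Gamma}$. The gap is fixable: either prove independence of $\J_{P_{\m,\Gamma}\subseteq G_{\m,\Gamma}}$ by left-translating any finite union relation into $\J_{P_{\m,\Gamma}}$ (multiply by $(0,a)$ with $a\in R_{\m,1}$ chosen via Lemma~\ref{lem:approx} to clear all denominators) and then applying Proposition~\ref{prop:CI}; or bypass universality by pulling functions back along the natural map $\iota:G_{\m,\Gamma}\to\Omega_K^\m$ induced by the diagonal embeddings, which satisfies $\iota^{-1}(Z(x,\a))=((x+\a)\times\a^\times)\cap G_{\m,\Gamma}$ for $x\in R_\m^{-1}R$ and has dense image (by the strong-approximation arguments of Propositions~\ref{prop:cosets} and~\ref{prop:quasiorbits}), so that $f\mapsto f\circ\iota$ is an injective $*$-homomorphism $C_0(\Omega_K^\m)\to\ell^\infty(G_{\m,\Gamma})$ with image exactly $D_{P_{\m,\Gamma}\subseteq G_{\m,\Gamma}}$.

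Two further points. Your fallback for injectivity of $\phi_K$ --- compressing by $E_{P_{\m,\Gamma}}$ and using $D_{P_{\m,\Gamma}}=E_{P_{\m,\Gamma}}D_{P_{\m,\Gamma}\subseteq G_{\m,\Gamma}}E_{P_{\m,\Gamma}}$ --- is not valid as stated: compression by a projection can annihilate a nonzero ideal, and indeed $C_0(\Omega_K^\m\setminus\Omega_R^\m)$ is a nonzero ideal killed by multiplication by $1_{\Omega_R^\m}$, so injectivity on the corner does not imply injectivity. To salvage that route you must add that $\ker\phi_K$ is $G_{\m,\Gamma}$-invariant (equivariance) and that every generator $E_X$ with $X\in\J_{P_{\m,\Gamma}\subseteq G_{\m,\Gamma}}$ is dominated by a $G_{\m,\Gamma}$-translate of $E_{P_{\m,\Gamma}}$; your first alternative, redoing the filter reconstruction over $\mathbb{A}_S$, is correct and makes this unnecessary. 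Finally, your prime-by-prime intersection rule must be applied to coset representatives $x,y\in R_\m^{-1}R$: for general $x,y\in K$ the cylinder sets, which only see the primes outside $S$, can intersect even when the cosets are disjoint in $K$. For instance, with $K=\mathbb{Q}$ and $\m_0=5\mathbb{Z}$ one has $Z(1/5,\mathbb{Z})=Z(0,\mathbb{Z})$ although $(1/5+\mathbb{Z})\cap\mathbb{Z}=\emptyset$, and the constructible set $((1/5+\mathbb{Z})\times\mathbb{Z}^\times)\cap G_{\m,\Gamma}$ is empty, so multiplicativity of the assignment $E_X\mapsto 1_{Z(X)}$ would fail on such formal data. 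Proposition~\ref{prop:cosets} guarantees that representatives in $R_\m^{-1}R$ always exist, and with that convention your comparison of the two semilattices, and the rest of your argument, goes through.
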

\begin{proof}
From Lemma~\ref{prop:cosets}, we have $\J_{P_{\m,\Gamma}\subseteq G_{\m,\Gamma}}=\{(x+\a)\times\a^\times : x\in K, \a\in\I_\m\}\cup\{\emptyset\}$. When $\m$ is trivial, the result follows from the analysis in \cite[Section~2]{Li4}, and the general case goes through almost verbatim.
\end{proof}

An immediate consequence, we have isomorphisms
\begin{equation}\label{eqn:crossedproduct}
C_\lambda^*(P_{\m,\Gamma})\cong 1_{\Omega_R^\m}(C_0(\Omega_K^\m)\rtimes_r G_{\m,\Gamma})1_{\Omega_R^\m}\cong C_\lambda^*(G_{\m,\Gamma}\ltimes\Omega_R^\m)
\end{equation}
where $G_{\m,\Gamma}\ltimes\Omega_R^\m=\{(g,w)\in G_{\m,\Gamma}\times \Omega_R^\m : gw\in \Omega_R^\m\}$ is the reduction groupoid of the transformation groupoid $G_{\m,\Gamma}\ltimes\Omega_K^\m$ with respect to the compact open set $\Omega_R^\m$. 

\begin{proposition}\label{prop:gpoidisom}
There is an isomorphism
\[
\vartheta:C^*(P_{\m,\Gamma})\cong C^*(G_{\m,\Gamma}\ltimes\Omega_R^\m)
\]
that is determined on generators by $\vartheta(v_{(b,a)})=1_{\{(b,a)\}\times\Omega_R^\m}$ for $(b,a)\in P_{\m,\Gamma}$.
\end{proposition}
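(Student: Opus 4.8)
The plan is to assemble $\vartheta$ from results already established and then to track the generators through the resulting chain of identifications. By Proposition~\ref{prop:full=reduced} the left regular representation identifies the full semigroup C*-algebra $C^*(P_{\m,\Gamma})$ with $C_\lambda^*(P_{\m,\Gamma})$, and Equation~\eqref{eqn:crossedproduct} identifies the latter with the \emph{reduced} groupoid C*-algebra $C_\lambda^*(G_{\m,\Gamma}\ltimes\Omega_R^\m)$. Hence the only new point needed for the isomorphism itself is that the full and reduced groupoid C*-algebras of $G_{\m,\Gamma}\ltimes\Omega_R^\m$ coincide.

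For this I would invoke amenability. The groupoid $G_{\m,\Gamma}\ltimes\Omega_R^\m$ is the reduction, to the compact open subset $\Omega_R^\m$ of the unit space, of the transformation groupoid $G_{\m,\Gamma}\ltimes\Omega_K^\m$ for the action of the discrete group $G_{\m,\Gamma}=(R_\m^{-1}R)\rtimes K_{\m,\Gamma}$ on $\Omega_K^\m$. As noted in the proof of Proposition~\ref{prop:full=reduced}, $G_{\m,\Gamma}$ is solvable, hence amenable; therefore $G_{\m,\Gamma}\ltimes\Omega_K^\m$ is an amenable étale groupoid, and amenability is inherited by its reduction to an open subset of the unit space. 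For amenable étale groupoids the canonical surjection $C^*\to C_\lambda^*$ is an isomorphism (see, e.g., \cite{CELY}), so $C^*(G_{\m,\Gamma}\ltimes\Omega_R^\m)\cong C_\lambda^*(G_{\m,\Gamma}\ltimes\Omega_R^\m)$. Composing with the two identifications above yields an isomorphism $\vartheta\colon C^*(P_{\m,\Gamma})\to C^*(G_{\m,\Gamma}\ltimes\Omega_R^\m)$.

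It then remains to verify the formula on generators. Under the left regular representation $v_{(b,a)}$ becomes $\lambda_{(b,a)}$. Viewing $C_\lambda^*(P_{\m,\Gamma})$ as the full corner $1_{\Omega_R^\m}(C_0(\Omega_K^\m)\rtimes_r G_{\m,\Gamma})1_{\Omega_R^\m}$ as in Equation~\eqref{eqn:crossedproduct}, the isometry $\lambda_{(b,a)}$ corresponds to $1_{\Omega_R^\m}\,u_{(b,a)}\,1_{\Omega_R^\m}$, where $u_{(b,a)}$ is the canonical unitary in the crossed product. Under the standard isomorphism $C_0(\Omega_K^\m)\rtimes_r G_{\m,\Gamma}\cong C_r^*(G_{\m,\Gamma}\ltimes\Omega_K^\m)$, which on the dense convolution algebra sends $f\,u_g$ to the function supported on the bisection $\{g\}\times\Omega_K^\m$ and given there by $f$, the element $1_{\Omega_R^\m}u_{(b,a)}1_{\Omega_R^\m}$ goes to the indicator of $\{((b,a),w):w\in\Omega_R^\m,\ (b,a)w\in\Omega_R^\m\}$. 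Since $(b,a)\in P_{\m,\Gamma}$ acts on $\Omega_R^\m$ by an ``integral'' map, one checks $(b,a)\Omega_R^\m\subseteq\Omega_R^\m$, so this set is simply $\{(b,a)\}\times\Omega_R^\m$. Thus $\vartheta(v_{(b,a)})=1_{\{(b,a)\}\times\Omega_R^\m}$, and because the $v_{(b,a)}$ generate $C^*(P_{\m,\Gamma})=C_\lambda^*(P_{\m,\Gamma})$ this determines $\vartheta$.

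The main obstacle I anticipate is bookkeeping rather than conceptual: carefully pinning down how the canonical generators $u_g$ of the reduced crossed product correspond to indicator functions of compact open bisections under the crossed-product-to-groupoid isomorphism, and confirming that compression by $1_{\Omega_R^\m}$ turns $u_{(b,a)}$ into the indicator of exactly $\{(b,a)\}\times\Omega_R^\m$, which uses the $P_{\m,\Gamma}$-invariance $(b,a)\Omega_R^\m\subseteq\Omega_R^\m$. The amenability step is routine once the solvability of $G_{\m,\Gamma}$ is invoked.
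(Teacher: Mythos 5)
Your proposal is correct and follows essentially the same route as the paper: the paper's proof likewise combines amenability of $G_{\m,\Gamma}$ (giving $C^*(G_{\m,\Gamma}\ltimes\Omega_R^\m)\cong C_r^*(G_{\m,\Gamma}\ltimes\Omega_R^\m)$) with Proposition~\ref{prop:full=reduced} and Equation~\eqref{eqn:crossedproduct}. Your additional bookkeeping tracking the generators $v_{(b,a)}$ through the chain of identifications, using $(b,a)\Omega_R^\m\subseteq\Omega_R^\m$, is a correct elaboration of what the paper leaves implicit.
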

\begin{proof}
Since $G_{\m,\Gamma}$ is amenable, there is a canonical isomorphism $C^*(G_{\m,\Gamma}\ltimes \Omega_{P_{\m,\Gamma}})\cong C_r^*(G_{\m,\Gamma}\ltimes \Omega_{P_{\m,\Gamma}})$. Hence, the result follows from Proposition~\ref{prop:full=reduced} combined with \eqref{eqn:crossedproduct}.  
\end{proof}

\section{Faithful representations of $C^*(R\rtimes R_{\m,\Gamma})$.} \label{sec:faithfulreps}

\subsection{A criterion for faithfulness}
As before, we will use the notation $P_{\m,\Gamma}:=R\rtimes R_{\m,\Gamma}$ and $G_{\m,\Gamma}:=(R_\m^{-1}R)\rtimes K_{\m,\Gamma}$. Also let $S:=\{\p: \p\mid \m_0\}$ be the support of $\m_0$ and put $\P_K^\m:=\P_K\setminus S$.

Following the approach of \cite[Theorem~3.7]{LR}, we next establish a faithfulness criterion for representations of $C^*(P_{\m,\Gamma})$ in terms of spanning projections of the diagonal.

\begin{theorem}\label{thm:faithful}
For each class $\k\in\I_\m/i(K_{\m,\Gamma})$, choose an integral ideal $\a_\k\in\k$. Suppose $\psi$ is a representation of $C^*(P_{\m,\Gamma})$ in a C*-algebra $B$. Then $\psi$ is injective if and only if for each $\k\in\I_\m/i(K_{\m,\Gamma})$, we have
\[
\psi\left(\prod_{i=1}^m(e_{\a_\k\times \a_\k^\times}-e_{(y_i+\a_i)\times \a_i^\times})\right)\neq 0
\]
for all $y_1,...,y_m\in R$ and $\a_1,...,\a_m\in\I_\m^+$ such that $y_i+\a_i\subsetneq \a_\k$ for $1\leq i\leq m$.
\end{theorem}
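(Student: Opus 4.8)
The plan is to follow the strategy of \cite[Theorem~3.7]{LR}: produce a faithful conditional expectation onto the diagonal, reformulate the displayed nonvanishing condition as injectivity of $\psi$ on the diagonal, and then transfer injectivity from the diagonal to all of $C^*(P_{\m,\Gamma})$ using amenability of the enveloping group $G_{\m,\Gamma}$.

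For the forward implication, suppose $\psi$ is injective. By Proposition~\ref{prop:full=reduced} we may compute in $C_\lambda^*(P_{\m,\Gamma})$ and view the spanning projections as the operators $E_X$ on $\ell^2(P_{\m,\Gamma})$. The hypothesis $y_i+\a_i\subsetneq\a_\k$ forces $y_i\in\a_\k$ and $\a_i\subsetneq\a_\k$, so $(y_i+\a_i)\times(\a_i\cap R_{\m,\Gamma})\subseteq\a_\k\times(\a_\k\cap R_{\m,\Gamma})$ and each factor $e_{\a_\k\times\a_\k^\times}-e_{(y_i+\a_i)\times\a_i^\times}$ is the projection onto an orthogonal complement; hence the product is the projection onto $\ell^2(Z)$ with $Z=(\a_\k\times(\a_\k\cap R_{\m,\Gamma}))\setminus\bigcup_i(y_i+\a_i)\times(\a_i\cap R_{\m,\Gamma})$. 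It then suffices to exhibit a point of $Z$. For each $i$ pick a prime $\p_i\nmid\m_0$ with $v_{\p_i}(\a_i)>v_{\p_i}(\a_\k)$, and apply Lemma~\ref{lem:approx} to find a totally positive $a\in R_{\m,1}\subseteq R_{\m,\Gamma}$ with $v_\p(a)=v_\p(\a_\k)$ at every prime dividing $\a_\k\prod_i\a_i$. Then $a\in\a_\k$ while $a\notin\a_i$ for every $i$, so $(0,a)\in Z$; thus the product is nonzero in $C^*(P_{\m,\Gamma})$, and therefore nonzero under the injective map $\psi$.

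For the reverse implication, assume all the displayed products are nonzero under $\psi$. Using the realization of $C^*(P_{\m,\Gamma})$ as a full corner in the reduced crossed product $C_0(\Omega_K^\m)\rtimes_r G_{\m,\Gamma}$ from \eqref{eqn:crossedproduct} (equivalently, as the reduced algebra of $G_{\m,\Gamma}\ltimes\Omega_R^\m$ from Proposition~\ref{prop:gpoidisom}), let $\Phi\colon C^*(P_{\m,\Gamma})\to D_\lambda(P_{\m,\Gamma})\cong C(\Omega_R^\m)$ be the compression to the corner of the canonical faithful expectation onto $C_0(\Omega_K^\m)$; since the corner is full, $\Phi$ is faithful. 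I would first show that the nonvanishing hypothesis is equivalent to injectivity of $\psi$ on the diagonal. The supports of the projections $e_{(x+\a)\times\a^\times}$ form a basis of compact-open subsets $U_{x+\a}$ of $\Omega_R^\m$; each displayed product is the indicator of the locally closed set $U_{\a_\k}\setminus\bigcup_i U_{y_i+\a_i}$; and conjugation by the isometries $s_a$ and their adjoints (i.e.\ the action of $G_{\m,\Gamma}$) carries a cylinder over an arbitrary $\a\in\I_\m^+$ to one over the chosen representative $\a_\k$ of its class in $\I_\m/i(K_{\m,\Gamma})$. Hence nonvanishing on the representatives $\a_\k$ forces the support of $\psi|_{D_\lambda(P_{\m,\Gamma})}$ to meet every nonempty basic set, so that $\psi|_{D_\lambda(P_{\m,\Gamma})}$ is injective.

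Finally, I would transfer injectivity from the diagonal to all of $C^*(P_{\m,\Gamma})$. The dual coaction of $G_{\m,\Gamma}$ has $\Phi$ as its associated expectation, and since $G_{\m,\Gamma}$ is solvable, hence amenable, one obtains a conditional expectation $\Phi_\psi$ on $\psi(C^*(P_{\m,\Gamma}))$ with $\Phi_\psi\circ\psi=\psi\circ\Phi$. Given $a\in\ker\psi$, applying $\Phi_\psi$ to $\psi(a^*a)=0$ yields $\psi(\Phi(a^*a))=0$; injectivity of $\psi|_{D_\lambda(P_{\m,\Gamma})}$ then gives $\Phi(a^*a)=0$, and faithfulness of $\Phi$ gives $a^*a=0$, so $a=0$ and $\psi$ is injective. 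I expect the main obstacle to be precisely this last transfer, namely the construction of $\Phi_\psi$ on the image of an a priori arbitrary representation and the verification that $\ker\psi$ is compatible with the coaction; this is where amenability of $G_{\m,\Gamma}$ and the left Ore property of Proposition~\ref{prop:ore} are essential, and it is the technical core inherited from \cite[Theorem~3.7]{LR}. The reduction-to-representatives step, which packages the number-theoretic content through the finite group $\I_\m/i(K_{\m,\Gamma})$, will also require care.
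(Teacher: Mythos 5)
Your forward implication and your reduction-to-representatives step are essentially the paper's: the paper likewise reformulates the displayed condition as injectivity of $\psi$ on the diagonal $C_u^*(\J_{P_{\m,\Gamma}})$ (via \cite[Proposition~5.6.21]{CELY}) and then uses conjugation by $\psi(v_{(y,1)})$, $\psi(v_{(0,a)})$, $\psi(v_{(0,b)})$ (with $a\a=b\a_\k$) to move an arbitrary defect projection to one over the chosen representative $\a_\k$. The genuine gap is in your final transfer step, and it is not a deferrable technicality: amenability of $G_{\m,\Gamma}$ and the left Ore property of Proposition~\ref{prop:ore} are \emph{not} sufficient to pass from injectivity of $\psi$ on the diagonal to injectivity of $\psi$. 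Indeed, take $P=G$ to be any nontrivial amenable group (a group is Ore with enveloping group itself); then $\J_P=\{\emptyset,G\}$, the diagonal is $\mathbb{C}1$, and any character of $C^*(G)=C_\lambda^*(G)$ is injective on the diagonal without being injective. Correspondingly, the expectation $\Phi_\psi$ you posit on $\psi(C^*(P_{\m,\Gamma}))$ with $\Phi_\psi\circ\psi=\psi\circ\Phi$ simply does not exist for an arbitrary representation that is faithful on the diagonal: its existence is equivalent to $\Phi$-invariance of $\ker\psi$, which is (essentially) the statement one is trying to prove, so the coaction formalism cannot produce it on its own.

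What is actually needed, and what your outline never supplies, is that the groupoid $G_{\m,\Gamma}\ltimes\Omega_R^\m$ is essentially principal, i.e.\ that the points of $\Omega_R^\m$ with trivial isotropy are dense. This is exactly the content of the paper's Propositions~\ref{prop:faithfulondiagonal} and~\ref{prop:quasiorbits}: density of free points is proved by a genuinely number-theoretic argument (Lemma~\ref{lem:approx} together with strong approximation, following \cite{EL}), and then Exel's theorem \cite[4.4~Theorem]{Ex} for amenable, essentially principal \'{e}tale groupoids gives ``faithful on $C(\Omega_R^\m)$ implies faithful''. Nor can this input be sidestepped by imitating the compression argument behind \cite[Theorem~3.7]{LR}: in the quasi-lattice ordered setting the characters coming from points of $P$ itself have trivial isotropy and are dense in the Nica spectrum, which is what makes that argument run, whereas here the characters of points of $P_{\m,\Gamma}$ have nontrivial isotropy as soon as $R^*\cap K_{\m,\Gamma}\neq\{1\}$ (for any unit $u\in R^*\cap K_{\m,\Gamma}$, the group element $(0,u)$ fixes the character of $(0,1)$, since $u\in\a$ iff $1\in\a$ iff $\a=R$); and $R^*\cap K_{\m,\Gamma}$ is nontrivial in most cases, e.g.\ whenever $R^*$ is infinite, because it has finite index in $R^*$. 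So the dense set of free points must be found among genuinely adelic points of $\Omega_R^\m$, and producing them is the number-theoretic core of the proof that your proposal omits while misattributing the difficulty to amenability and the Ore property.
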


We need a preliminary result.

\begin{proposition}\label{prop:faithfulondiagonal}
A representation $\psi$ of $C^*(P_{\m,\Gamma})$ is faithful if and only if it is faithful on $C_u^*(\J_{P_{\m,\Gamma}})$.
\end{proposition}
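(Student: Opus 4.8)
The forward implication is immediate, since the restriction of a faithful representation to any sub-C*-algebra is again faithful. For the converse, the plan is to pass to the groupoid picture of Proposition~\ref{prop:gpoidisom}: we identify $C^*(P_{\m,\Gamma})$ with $C^*(\mathcal{G})$ for the transformation groupoid $\mathcal{G}:=G_{\m,\Gamma}\ltimes\Omega_R^\m$, which is \'etale and Hausdorff because $G_{\m,\Gamma}$ is discrete and $\Omega_R^\m$ is Hausdorff. Since $G_{\m,\Gamma}$ is amenable, $C^*(\mathcal{G})=C_r^*(\mathcal{G})$ and there is a faithful conditional expectation onto $C(\mathcal{G}^{(0)})=C(\Omega_R^\m)$. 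Under this identification, together with the equivariant isomorphism $C_u^*(\J_{P_{\m,\Gamma}})\cong D_{P_{\m,\Gamma}}\cong C(\Omega_R^\m)$ of Proposition~\ref{prop:adelicdescription}, the diagonal subalgebra $C_u^*(\J_{P_{\m,\Gamma}})$ is carried precisely onto the unit-space algebra $C(\mathcal{G}^{(0)})$.

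With these identifications in place, I would invoke the uniqueness theorem for \'etale groupoid C*-algebras: if $\mathcal{G}$ is topologically principal, then a representation of $C_r^*(\mathcal{G})$ is faithful if and only if it is faithful on $C(\mathcal{G}^{(0)})$ (this is the circle of ideas underlying \cite[Lemma~4.6]{SW}). Thus everything reduces to proving that $\mathcal{G}$ is topologically principal, i.e., that the set of points of $\Omega_R^\m$ with trivial isotropy in $G_{\m,\Gamma}$ is dense.

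To verify topological principality, I would work in the adelic model $\Omega_R^\m=(\hat{R}_S\times\hat{R}_S/\hat{R}_S^*)/\!\sim$, where basic open sets are cylinders determined by finitely many primes. Because $G_{\m,\Gamma}$ is countable, the Baire category theorem reduces the task to showing that for each non-identity $(n,k)\in G_{\m,\Gamma}$ the fixed-point set $\{[\mathbf{b},\bar{\mathbf{a}}]:(n,k)[\mathbf{b},\bar{\mathbf{a}}]=[\mathbf{b},\bar{\mathbf{a}}]\}$ has empty interior. Spelling out the action, a point is fixed exactly when $k\bar{\mathbf{a}}=\bar{\mathbf{a}}$ and $v_\p(n+(k-1)\mathbf{b})\geq v_\p(\bar{\mathbf{a}})$ for all $\p\notin S$. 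Given any cylinder, I would choose a prime $\p_0\notin S$ lying outside the finitely many primes constraining the cylinder and outside the support of $n$, and then adjust the $\p_0$-coordinates of $\mathbf{b}$ and $\bar{\mathbf{a}}$ freely to break the fixing relation there. If $k\notin R^*$, then $v_{\p_0}(k)\neq 0$ for some such $\p_0$, and $k\bar{\mathbf{a}}=\bar{\mathbf{a}}$ forces $v_{\p_0}(\bar{\mathbf{a}})=\infty$, a nowhere-dense condition; if $k=1$ and $n\neq 0$, the inequality fails once $v_{\p_0}(\bar{\mathbf{a}})>v_{\p_0}(n)$.

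The main obstacle I anticipate is the remaining case $k\in R^*\setminus\{1\}$, where $k$ fixes every super ideal $\bar{\mathbf{a}}$, so the first relation gives no constraint and the fixed points must instead be broken using the translation coordinate. The essential point is that $k-1\neq 0$: at the auxiliary prime $\p_0$ I can make $v_{\p_0}(\bar{\mathbf{a}})$ large while choosing $\mathbf{b}_{\p_0}$ so that $v_{\p_0}(n+(k-1)\mathbf{b})$ is small, again violating the inequality on a dense set. In every case the fixed-point set is nowhere dense, so $\mathcal{G}$ is topologically principal. (Topological principality of the reduction to the compact open set $\Omega_R^\m$ is checked directly as above, or inherited from the full transformation groupoid $G_{\m,\Gamma}\ltimes\Omega_K^\m$.) The uniqueness theorem then closes the argument.
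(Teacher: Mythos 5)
Your proof is correct, but it takes a genuinely different route through the key step. Both you and the paper reduce the statement, via Proposition~\ref{prop:gpoidisom} and amenability of $G_{\m,\Gamma}$, to showing that the groupoid $G_{\m,\Gamma}\ltimes\Omega_R^\m$ is essentially (topologically) principal, i.e., that the points of $\Omega_R^\m$ with trivial isotropy are dense. The paper obtains this as a special case of Proposition~\ref{prop:quasiorbits}: it first identifies the quasi-orbits as the sets $C_A$, and then, following \cite[Lemma~3.4]{EL}, produces in each $C_A$ a point with trivial isotropy whose vanishing set is exactly $A$; this yields the stronger statement that trivial-isotropy points are dense in \emph{every} closed invariant subset, which is precisely what is used again in Section~\ref{sec:primideals} to apply \cite[Lemma~4.6]{SW}. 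Your route is instead a direct Baire-category argument: since $G_{\m,\Gamma}$ is countable and $\Omega_R^\m$ is compact Hausdorff, it suffices that each non-identity $(n,k)$ have closed fixed-point set with empty interior, which you check by a three-case analysis on $k$ (non-unit; $k=1$, $n\neq 0$; unit $\neq 1$, where $k-1\neq 0$ is the essential point). This is more elementary and self-contained, and it is exactly enough for the present proposition; what it does not give is density of trivial-isotropy points inside proper closed invariant subsets (fixed-point sets need not be nowhere dense relative to such a subset), so the paper's stronger Proposition~\ref{prop:quasiorbits}(2) would still have to be proved separately for the primitive ideal space analysis, whereas the paper's argument serves both purposes at once.

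Two small points to patch. First, in the case $k\notin R^*$ you assert that $v_{\p_0}(k)\neq 0$ for some prime $\p_0$ lying \emph{outside} the finitely many primes constraining a given cylinder; this need not hold, since the finitely many primes in the support of $k$ are fixed in advance and may all be constrained. The repair is already contained in your own sentence: at any fixed $\p_1\notin S$ with $v_{\p_1}(k)\neq 0$, the fixing relation forces $v_{\p_1}(\bar{\mathbf{a}})=\infty$, and the set $\{[\mathbf{b},\bar{\mathbf{a}}]:v_{\p_1}(\bar{\mathbf{a}})=\infty\}$ is closed with empty interior (a basic open set can only impose $v_{\p_1}=c$ with $c$ finite, or $v_{\p_1}\geq n_{\p_1}$), so the fixed-point set is nowhere dense regardless of where $\p_1$ sits relative to the cylinder. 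Second, the uniqueness theorem you invoke is the one the paper cites as \cite[4.4~Theorem]{Ex}, for amenable, essentially principal \'{e}tale groupoids; \cite[Lemma~4.6]{SW} concerns the quasi-orbit parametrization of primitive ideals and is not the right reference for this faithfulness criterion.
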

\begin{proof}
Since the isomorphism $C^*(P_{\m,\Gamma})\cong C^*(G_{\m,\Gamma}\ltimes\Omega_R^\m)$ from Proposition~\ref{prop:gpoidisom} carries $C_u^*(\J_{P_{\m,\Gamma}})$ isomorphically onto $C(\Omega_R^\m)$, it suffices to prove that a representation $\psi$ of the C*-algebra $C^*(G_{\m,\Gamma}\ltimes\Omega_R^\m)$ is faithful if and only if it is faithful on $C(\Omega_R^\m)$.

Since $G_{\m,\Gamma}\ltimes\Omega_R^\m$ is amenable, by \cite[4.4~Theorem]{Ex}, it suffices to show that $G_{\m,\Gamma}\ltimes\Omega_R^\m$ is essentially principal; in the terminology from \cite{Ex}, this means that we need to show that the interior of the isotropy bundle of $G_{\m,\Gamma}\ltimes\Omega_R^\m$ coincides with the unit space of $G\ltimes\Omega_R^\m$. For this, it suffices to show that the set of points in $\Omega_R^\m$ with trivial isotropy is dense in $\Omega_R^\m$; this is a special case of the subsequent result.
\end{proof}

For each $w\in \Omega_R^\m$, let $G_{\m,\Gamma}.w:=\{gw: (g,w)\in \Omega_R^\m\}$ be the orbit of $w$; its closure $\overline{G_{\m,\Gamma}.w}$ is called the \emph{quasi-orbit} of $w$. The following proposition is more than we need; its full strength will be used in Section~\ref{sec:primideals} below.

\begin{proposition}[cf. {\cite[Lemmas 3.1, 3.4 and Corollary 3.5]{EL}}]\label{prop:quasiorbits}
For $\bar{\mathbf{a}}\in \hat{R}_S/\hat{R}_S^*$, let $Z(\bar{\mathbf{a}}):=\{\p\in \P_K^\m : \bar{\mathbf{a}}_\p=0\}$, and for each set $A\subseteq\P_K^\m$, let $C_A:=\{[\mathbf{b},\bar{\mathbf{a}}]\in\Omega_R^\m : A\subseteq Z(\bar{\mathbf{a}})\}$. Then
\begin{enumerate}
\item the quasi-orbit of a point $[\mathbf{b},\bar{\mathbf{a}}]\in  \Omega_R^\m$ is equal to $C_{Z(\bar{\mathbf{a}})}$;
\item for any closed $G_{\m,\Gamma}$-invariant subset $C\subseteq \Omega_R^\m$, the set of points in $C$ with trivial isotropy is dense in $C$.
\end{enumerate}
In particular, the set of points in $\Omega_R^\m$ with trivial isotropy is dense in $\Omega_R^\m$.
\end{proposition}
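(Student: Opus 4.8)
The plan is to treat the two assertions separately and deduce the final ``in particular'' clause from (2) applied to $C=\Omega_R^\m$, following the scheme of \cite[Lemmas~3.1,~3.4]{EL} but inserting the number-theoretic input specific to $(\m,\Gamma)$. Throughout I use that $\Omega_R^\m$ is compact, Hausdorff and second countable, and that $G_{\m,\Gamma}$ is countable. For (1), I would first record that each $C_A$ is closed and $G_{\m,\Gamma}$-invariant: the map $[\mathbf{b},\bar{\mathbf{a}}]\mapsto\bar{\mathbf{a}}$ is continuous, the condition $\bar{\mathbf{a}}_\p=0$ cuts out a closed set in $\hat{R}_S/\hat{R}_S^*$, and for $k\in K_{\m,\Gamma}$ one has $v_\p(k)<\infty$ for every $\p$, so $Z(k\bar{\mathbf{a}})=Z(\bar{\mathbf{a}})$ while the additive part does not move $\bar{\mathbf{a}}$. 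Hence the orbit of $[\mathbf{b},\bar{\mathbf{a}}]$, and so its quasi-orbit, lies in $C_{Z(\bar{\mathbf{a}})}$. The substance is the reverse inclusion $C_{Z(\bar{\mathbf{a}})}\subseteq\overline{G_{\m,\Gamma}.[\mathbf{b},\bar{\mathbf{a}}]}$: given a target $[\mathbf{d},\bar{\mathbf{c}}]$ with $Z(\bar{\mathbf{a}})\subseteq Z(\bar{\mathbf{c}})$ and a basic neighbourhood determined by a finite set $F\subseteq\P_K^\m$ and a level $N$, I would produce $g=(n,k)\in G_{\m,\Gamma}$ with $g.[\mathbf{b},\bar{\mathbf{a}}]$ in that neighbourhood. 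On $F\setminus Z(\bar{\mathbf{a}})$ the multiplicative part must satisfy $v_\p(k)=v_\p(\bar{\mathbf{c}})-v_\p(\bar{\mathbf{a}})$ (truncated at level $N$), while $k\bar{\mathbf{a}}$ stays integral off $F$; on $F\cap Z(\bar{\mathbf{a}})$ both super-ideals already have value $\infty$, so there is nothing to match. The additive part $n\in R_\m^{-1}R$ is then chosen by strong approximation (\cite[Theorem~6.28]{Nar}), exactly as in the proof of Proposition~\ref{prop:cosets}, to match $\mathbf{d}$ modulo $\p^N$ on $F$.

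I expect the construction of $k$ to be the main obstacle of (1), because a prescribed finite valuation pattern need not be realised by a \emph{principal} ideal, let alone by an element of $K_{\m,\Gamma}$. The remedy is to exploit the infinitude of primes: after fixing the valuations on $F$, multiply by a single auxiliary prime $\p_0\notin F\cup S$, taken with positive valuation so that integrality of $k\bar{\mathbf{a}}$ is preserved, and chosen in the ray class that trivialises the class of $\prod_{\p\in F}\p^{v_\p(k)}$ in the finite group $\I_\m/i(K_{\m,\Gamma})$; such a $\p_0$ exists since every class of $\I_\m/i(K_{\m,\Gamma})$ contains a prime. The resulting ideal lies in $i(K_{\m,\Gamma})$, hence equals $kR$ for some $k\in K_{\m,\Gamma}$ with the desired valuations on $F$, and Lemma~\ref{lem:approx} together with Proposition~\ref{prop:onto} gives the fine control needed to keep $k$ coprime to $\m_0$ with residue in $\Gamma$. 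This is precisely the point at which the hypotheses on $(\m,\Gamma)$ enter, and it is what makes the quasi-orbit all of $C_{Z(\bar{\mathbf{a}})}$ rather than a single coset under the class group.

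For (2), I would argue by Baire category inside the closed invariant set $C$, which is compact metrizable and hence Baire. The complement of the trivial-isotropy locus in $C$ is the countable union $\bigcup_{e\neq g\in G_{\m,\Gamma}}\bigl(\mathrm{Fix}(g)\cap C\bigr)$ of closed sets, so it suffices to show each term has empty interior in $C$; the trivial-isotropy locus is then residual, in particular dense. Fix $g=(n,k)\neq e$ and a nonempty relatively open $U\subseteq C$, and take $[\mathbf{b},\bar{\mathbf{a}}]\in U$. A fixed point of $g$ forces $k\bar{\mathbf{a}}=\bar{\mathbf{a}}$, i.e. $v_\p(k)=0$ for every $\p\notin Z(\bar{\mathbf{a}})$, together with $n+(k-1)\mathbf{b}\in\bar{\mathbf{a}}\hat{R}_S$. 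If $k$ is not a unit, then at a prime with $v_\p(k)\neq0$ a fixed point must have $v_\p(\bar{\mathbf{a}})=\infty$, so replacing this value by a large finite one (which approximates $\infty$ in the topology of $\hat{R}_S/\hat{R}_S^*$, and can be done at a prime outside $F$) yields a nearby non-fixed point. When this multiplicative freedom is unavailable — in particular when $k$ is a unit of $R$ lying in $K_{\m,\Gamma}$, including $k=1$ — I would instead perturb the additive coordinate $\mathbf{b}_\p$ at a prime where $\bar{\mathbf{a}}$ has large or infinite valuation, so as to violate the congruence $n+(k-1)\mathbf{b}\in\bar{\mathbf{a}}\hat{R}_S$.

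I anticipate this unit case to be the delicate part of (2), since the multiplicative coordinate gives no leverage and one must genuinely use the genericity of the additive coordinate $\mathbf{b}$; this mirrors the role of the fibre over $\hat R_S/\hat R_S^*$ in \cite[Lemma~3.4]{EL}. Finally, taking $C=\Omega_R^\m$ gives the last assertion, which is exactly the density statement invoked in the proof of Proposition~\ref{prop:faithfulondiagonal}.
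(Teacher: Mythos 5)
Your part (1) is correct and structurally parallel to the paper's proof (one inclusion because $C_{Z(\bar{\mathbf{a}})}$ is closed and $G_{\m,\Gamma}$-invariant, the other by moving $[\mathbf{b},\bar{\mathbf{a}}]$ into a basic neighbourhood of an arbitrary point of $C_{Z(\bar{\mathbf{a}})}$, with the additive coordinate handled by strong approximation exactly as in Proposition~\ref{prop:cosets}), but your construction of the multiplicative part $k$ is genuinely different and uses a stronger tool: you invoke the existence of a prime in every class of $\I_\m/i(K_{\m,\Gamma})$ (a Chebotarev/Dirichlet-type fact which the paper does use elsewhere, via \cite[Theorem~7.2]{MilCFT}, but not here). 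The paper gets $k$ elementarily as $k=a/b\in K_{\m,1}$ with $a,b\in R_{\m,1}$ supplied by Lemma~\ref{lem:approx}: prescribe $v_\p(b)=v_\p(\bar{\mathbf{a}})$ on $F\setminus Z(\bar{\mathbf{c}})$, then prescribe $v_\p(a)$ on $F$ and force $v_\p(a)=v_\p(b)$ at the finitely many remaining primes where $v_\p(b)>0$. The principality obstruction you worry about never arises, because nothing requires the support of $k$ to be contained in $F$: extra non-negative valuations of $k$ off $F$ spoil neither integrality of $k\bar{\mathbf{a}}$ nor the valuation conditions defining the target neighbourhood.

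Part (2) is where there are genuine gaps. Your Baire-category scheme is sound in outline and is a genuinely different route from the paper's, which instead constructs, for each $A\subseteq\P_K^\m$, a single point of $C_A$ with trivial isotropy and $Z(\bar{\mathbf{c}})=A$ (following \cite[Lemma~3.4]{EL}) and then uses part (1): the orbit of that point consists of trivial-isotropy points and is dense in $C_A$, and every closed invariant $C$ is a union of such sets $C_A$. The problem lies in your verification that each $\mathrm{Fix}(g)\cap C$ has empty interior in $C$. First, you never check that your perturbed points remain in $C$, and for the multiplicative perturbation they need not: since $C$ is an arbitrary closed invariant set, the only perturbations guaranteed to stay in $C$ are those that do not shrink the zero set, because part (1) gives $C\supseteq C_{Z(\bar{\mathbf{a}})}$ while $C_{Z(\bar{\mathbf{a}})\setminus\{\p\}}$ is strictly larger and need not be contained in $C$. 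Replacing $v_\p(\bar{\mathbf{a}})=\infty$ by a large finite value does exactly this forbidden shrinking. Concretely, take $C=C_{\{\p\}}$ and $g=(0,t_\p)$, where $\p^{f_\p}=t_\p R$ with $t_\p\in R_{\m,\Gamma}$ as in Theorem~\ref{thm:primideals}: then $k=t_\p$ is not a unit, so your case division prescribes the multiplicative move, and every point it produces lies outside $C$. Second, for $g=(n,1)$ with $n\neq 0$ --- a case your text explicitly assigns to the additive perturbation (``including $k=1$'') --- the fixed-point condition reads $n\in\bar{\mathbf{a}}\hat{R}_S$; it does not involve $\mathbf{b}$ at all, so no perturbation of the additive coordinate can violate it.

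Both gaps are closed by one uniform move that only enlarges the zero set, hence stays in $C_{Z(\bar{\mathbf{a}})}\subseteq C$: pick a prime $\mathfrak{q}$ outside the finite set of primes constrained by the given basic neighbourhood, replace $\bar{\mathbf{a}}_{\mathfrak{q}}$ by $0$, and choose the new additive coordinate $\mathbf{b}_{\mathfrak{q}}'\in R_{\mathfrak{q}}$ with $n+(k-1)\mathbf{b}_{\mathfrak{q}}'\neq 0$ in $K_{\mathfrak{q}}$ (avoid the unique root of $n+(k-1)X$ when $k\neq 1$; when $k=1$ and $n\neq 0$ any choice works). The resulting point lies in the neighbourhood and in $C$, and it is not fixed by $g$, since fixedness would force $n+(k-1)\mathbf{b}_{\mathfrak{q}}'=0$ at the prime $\mathfrak{q}$ where the multiplicative coordinate vanishes. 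With this repair your Baire argument does prove (2); note that the multiplicative perturbation is then never needed.
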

\begin{proof}
The proof of the first part is similar to the proof of \cite[Lemma~3.1]{EL}, but differs in a few places, so we include it here.
 
Clearly, we have $[\mathbf{b},\bar{\mathbf{a}}]\in C_{Z(\bar{\mathbf{a}})}$. Since $C_{Z(\bar{\mathbf{a}})}$ is closed and $G_{\m,\Gamma}$-invariant, it follows that the quasi-orbit of $[\mathbf{b},\bar{\mathbf{a}}]$ is contained in $C_{Z(\bar{\mathbf{a}})}$. Thus, we only need to show that $C_{Z(\bar{\mathbf{a}})}$ is contained in the quasi-orbit of $[\mathbf{b},\bar{\mathbf{a}}]$. 
	Let $[\mathbf{d},\bar{\mathbf{c}}]\in C_{Z(\bar{\mathbf{a}})}$. Any open set containing $[\mathbf{d},\bar{\mathbf{c}}]$ contains the image under the quotient map $\pi:\hat{R}_S\times \hat{R}_S/\hat{R}_S^*\to\Omega_R^\m$ of an (open) set $W_1\times W_2$ where $W_1\subseteq\hat{R}_S$ is an open set of the form 
\[
W_1=\{\mathbf{e}\in \hat{R}_S : v_\p(\mathbf{e}-\mathbf{d})\geq v_\p(\a)\text{ for all }\p\in\mathcal{P}_K^\m\}
\]
for some integral ideal $\a\in\I_\m^+$, and $W_2\subseteq\hat{R}_S/\hat{R}_S^*$ is an open set of the form
\[
	W_2=\{\bar{\mathbf{e}}\in \hat{R}_S/\hat{R}_S^* : v_\p(\bar{\mathbf{e}})=v_\p(\bar{\mathbf{c}})\text{ for }\p\in F\setminus Z(\bar{\mathbf{c}}) \text{ and } v_\p(\bar{\mathbf{e}})\geq n_\p \text{ for } \p\in F\cap Z(\bar{\mathbf{c}})\}
\]
for some finite set $F\subseteq\P_K^\m$ and non-negative integers $n_\p$ for $\p\in F\cap Z(\bar{\mathbf{c}})$. By Lemma~\ref{lem:approx}, we can find $b\in R_{\m,1}$ such that $v_\p(b)=v_\p(\bar{\mathbf{a}})$ for $\p\in F\setminus Z(\bar{\mathbf{c}})$. Now use Lemma~\ref{lem:approx} again to choose $a\in R_{\m,1}$ such that
\begin{itemize}
		\item $v_\p(a)=v_\p(\bar{\mathbf{c}})$ for $\p\in F\setminus Z(\bar{\mathbf{c}})$;
		\item $v_\p(a)=n_\p+v_\p(b)$ for $\p\in F\cap Z(\bar{\mathbf{c}})$;
		\item $v_\p(a)=v_\p(b)$ for $\p\in F^c$ with $v_\p(b)>0$.
\end{itemize}
	
Let $k:=a/b$. Then $k\in K_{\m,1}$, $k\bar{\mathbf{a}}\in \hat{R}_S/\hat{R}_S^*$, and $k\bar{\mathbf{a}}\in W_2$. By strong approximation (\cite[Theorem~6.28]{Nar}), $K$ is dense in $\mathbb{A}_S$, so there exists $y\in K$ such that $y+k\mathbf{b}\in W_1$. As in the proof of Lemma~\ref{prop:cosets}, we can find $x\in R_\m^{-1}R$ such that $x-y\in\a$. Then $x+k\mathbf{b}\in W_1$, so we have that $(x,k)[\mathbf{b},\bar{\mathbf{a}}]\subseteq \pi(W_1\times W_2)$. Hence, $C_{Z(\bar{\mathbf{a}})}$ is contained in the quasi-orbit of $[\mathbf{b},\bar{\mathbf{a}}]$.
	
An argument analogous to that given in the proof of \cite[Lemma~3.4]{EL} now shows that for any $A\subseteq \P_K^\m$, there exists $[\mathbf{d},\bar{\mathbf{c}}]\in C_A$ such that the isotropy group of $[\mathbf{d},\bar{\mathbf{c}}]$ is trivial and $Z(\bar{\mathbf{c}})=A$. This implies part (2), so we are done.
\end{proof}

We are now ready for the proof of Theorem~\ref{thm:faithful}.

\begin{proof}[Proof of Theorem~\ref{thm:faithful}]
By Proposition~\ref{prop:faithfulondiagonal}, it suffices to prove that the restriction of $\psi$ to $C_u^*(\J_{P_{\m,\Gamma}})$ is injective. For this, by \cite[Proposition~5.6.21]{CELY}, it is enough to show that 
\begin{equation}\label{eqn:defectproj}
\psi\left(e_{(y+\a)\times\a^\times}-\bigvee_{i=1}^me_{(y_i+\a_i)\times \a^\times}\right)=\psi\left(\prod_{i=1}^m(e_{(y+\a)\times\a^\times}-e_{(y_i+\a_i)\times \a^\times})\right)\neq 0
\end{equation}
for $y,y_1,...,y_m\in R$, $\a,\a_1,...,\a_m\in\I_\m^+$ such that $y_i+\a_i\subsetneq y+\a$ for $1\leq i\leq m$. Here, $\bigvee_{i=1}^me_{(y_i+\a_i)\times \a^\times}$ is the smallest projection in $C_u^*(\J_{P_{\m,\Gamma}})$ that dominates each $e_{(y_i+\a_i)\times \a^\times}$, see \cite[Lemma~5.6.20.]{CELY}.

We will exploit the covariance condition, see \eqref{eqn:covariance}. For each $(b,a)\in P_{\m,\Gamma}$, let $W_{(b,a)}:=\psi(v_{(b,a)})$, and observe that
\[\begin{aligned}
W_{(y,1)}^*\psi(\prod_{i=1}^m(e_{(y+\a)\times\a^\times}- e_{(y_i+\a_i)\times \a^\times}))&W_{(y,1)}=\psi(\prod_{i=1}^m(v_{(y,1)}^*(e_{(y+\a)\times\a^\times}- e_{(y_i+\a_i)\times \a^\times})v_{(y,1)})\\
&=\psi(\prod_{i=1}^m(v_{(y,1)}^*e_{(y+\a)\times\a^\times}v_{(y,1)}-v_{(y,1)}^*e_{(y_i+\a_i)\times \a^\times}v_{(y,1)}))\\
&=\psi(\prod_{i=1}^m(e_{\a\times\a^\times}- e_{(y_i-y+\a_i)\times \a^\times})).
\end{aligned}\]
Since $W_{(y,1)}$ is a unitary, it follows that $\psi(\prod_{i=1}^m(e_{(y+\a)\times\a^\times}- e_{(y_i+\a_i)\times \a^\times}))$ is non-zero if and only if  $\psi(\prod_{i=1}^m(e_{\a\times\a^\times}- e_{(y_i-y+\a_i)\times \a^\times}))$ is non-zero; hence, it is enough to show that \eqref{eqn:defectproj} holds when $y=0$.

Let $y_1,...,y_m\in R$ and $\a,\a_1,...,\a_m\in\I_\m^+$ be such that $y_i+\a_i\subsetneq \a$ for $1\leq i\leq m$. If $\k\in\I_\m/i(K_{\m,\Gamma})$ is the class containing $\a$, then there exists $a,b\in R_{\m,\Gamma}$ such that $a\a=b\a_\k$. We have
\[\begin{aligned}
W_{(0,a)}\psi(e_{\a\times\a^\times})W_{(0,a)}^*&=\psi(v_{(0,a)}e_{\a\times\a^\times}v_{(0,a)}^*)\\
&=\psi(e_{a\a\times(a\a)^\times})\\
&=\psi(e_{b\a_\k\times(b\a_\k)^\times})\\
&=\psi(v_{(0,b)}e_{\a_\k\times\a_\k^\times}v_{(0,b)}^*)\\
&=W_{(0,b)}\psi(e_{\a_\k\times\a_\k^\times})W_{(0,b)}^*.
\end{aligned}\] 
Now, $y_i+\a_i\subseteq \a$ implies that $ay_i+a\a_i\subseteq a\a=b\a_\k$. Hence, there exists $\tilde{y}_i\in\a_\k$ such that $ay_i=b\tilde{y}_i$. From this, we see that $a\a_i\subseteq b(\a_\k-\tilde{y}_i)$ which implies that $\tilde{\a}_i:=\frac{a}{b}\a_i$ is an integral ideal. 
Since $a,b\in R_{\m,\Gamma}$, we see also that $\tilde{\a}_i$ is coprime to $\m_0$, so that $\tilde{\a}_i$ lies in $\I^+_\m$. Since $a(y_i+\a_i)=b(\tilde{y}_i+\tilde{\a}_i)$, we have
\begin{align*}
W_{(0,a)}\psi(e_{(y_i+\a_i)\times\a_i^\times})W_{(0,a)}^*&=\psi(e_{a(y_i+\a_i)\times (a\a_i)^\times})\\
&=\psi(e_{b(\tilde{y}_i+\tilde{\a}_i)\times (b\tilde{\a}_i)^\times})\\
&=W_{(0,b)}\psi(e_{(\tilde{y}_i+\tilde{\a}_i)\times \tilde{\a}_i^\times}W_{(0,b)}^*.
\end{align*}
Conjugating by an isometry defines an injective *-homomorphism, so
\[\begin{aligned}
\psi(\prod_{i=1}^n(e_{\a\times\a^\times}-e_{(y_i+\a_i)\times\a_i^\times}))= 0 & \iff W_{(0,a)}\psi(\prod_{i=1}^ne_{\a\times\a^\times}-e_{(y_i+\a_i)\times\a_i^\times})W_{(0,a)}^*=0\\
&\iff \psi(\prod_{i=1}^nv_{(0,a)}e_{\a\times\a^\times}v_{(0,a)}^*-v_{(0,a)}e_{(y_i+\a_i)\times\a_i^\times}v_{(0,a)}^*)=0\\
&\iff \psi(\prod_{i=1}^nv_{(0,b)}e_{\a_\k\times\a_\k^\times}v_{(0,b)}^*-v_{(0,b)}e_{(\tilde{y}_i+\tilde{\a}_i)\times\tilde{\a}_i^\times}v_{(0,b)}^*)=0\\
&\iff W_{(0,b)}\psi(\prod_{i=1}^n(e_{\a_\k\times\a_\k^\times}-e_{(\tilde{y}_i+\tilde{\a}_i)\times\tilde{\a}_i^\times}))W_{(0,b)}^*=0\\
&\iff \psi(\prod_{i=1}^n(e_{\a_\k\times\a_\k^\times}-e_{(\tilde{y}_i+\tilde{\a}_i)\times\tilde{\a}_i^\times}))=0.
\end{aligned}\]
Since $\psi(\prod_{i=1}^n(e_{\a_\k\times\a_\k^\times}-e_{(\tilde{y}_i+\tilde{\a}_i)\times\tilde{\a}_i^\times}))$ is non-zero by assumption, $\psi(\prod_{i=1}^n(e_{\a\times\a^\times}-e_{(y_i+\a_i)\times\a_i^\times}))$ must also be non-zero. Hence, $\psi$ is injective on $C_u^*(\J_{P_{\m,\Gamma}})$ as desired.
\end{proof} 

As an immediate consequence, we obtain the following reformulation.

\begin{corollary}
Suppose that $B$ is a C*-algebra containing elements $U^x$ for $x\in R$, $S_a$ for $a\in R_{\m,\Gamma}$, and $E_\a$ for $\a\in \I_\m^+$ satisfying the ``uppercase'' analogues of \textup{(Ta)}--\textup{(Td)} from Proposition~\ref{prop:CDLfamily}, and let $\psi: C^*(P_{\m,\Gamma})\to B$ be the unique *-homomorphism such that $\psi(u^x)=U^x$, $\psi(s_a)=S_a$, and $\psi(e_\a)=E_\a$.
Then $\psi$ is an isomorphism onto the sub-C*-algebra of $B$ generated by $\{U^x x\in R\},\{S_a : a\in R_{\m,\Gamma}\}$, and $\{E_\a :\a\in\I_\m^+\}$ if and only if for each $\k\in \I_\m/i(K_{\m,\Gamma})$, we have 
\[
\prod_{i=1}^m(E_{\a_\k}-U^{y_i}E_{\a_i}U^{-y_i})\neq 0
\]
for all $y_1,...,y_m\in R$ and $\a_1,...,\a_m\in\I_\m^+$ such that $y_i+\a_i\subsetneq \a_\k$ for $1\leq i\leq m$.
\end{corollary}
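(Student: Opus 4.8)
The plan is to deduce this corollary directly from Proposition~\ref{prop:CDLfamily} together with Theorem~\ref{thm:faithful}, since the statement is nothing more than the ``uppercase'' reformulation of the faithfulness criterion. First I would invoke Proposition~\ref{prop:CDLfamily} to obtain the existence and uniqueness of the *-homomorphism $\psi\colon C^*(P_{\m,\Gamma})\to B$ determined by $u^x\mapsto U^x$, $s_a\mapsto S_a$, and $e_\a\mapsto E_\a$. The canonical generators of $C^*(P_{\m,\Gamma})$ are the isometries $v_{(x,a)}=u^xs_a$ and the projections $e_{(x+\a)\times\a^\times}$; their images under $\psi$ are $U^xS_a$ and $U^xE_\a U^{-x}$, which lie in the sub-C*-algebra generated by $\{U^x\}$, $\{S_a\}$, $\{E_\a\}$, while conversely $U^x$, $S_a$, $E_\a$ are themselves in the range of $\psi$. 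Hence the image of $\psi$ equals exactly that sub-C*-algebra, and so $\psi$ is an isomorphism onto its image if and only if $\psi$ is injective. It thus remains to match the injectivity criterion of Theorem~\ref{thm:faithful} with the stated non-vanishing condition.

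The key step is to translate the defect projections of Theorem~\ref{thm:faithful} into the uppercase data. Recall from the proof of Proposition~\ref{prop:CDLfamily} that $E_{x+\a}:=U^xE_\a U^{-x}$ is the image of $e_{(x+\a)\times\a^\times}$; indeed, relation~\ref{relation:(I)} gives $u^x e_\a u^{-x}=e_{(x,1)\cdot(\a\times\a^\times)}=e_{(x+\a)\times\a^\times}$ since $(x,1)\cdot(\a\times\a^\times)=(x+\a)\times\a^\times$. Applying $\psi$ to the product appearing in Theorem~\ref{thm:faithful} therefore yields
\[
\psi\left(\prod_{i=1}^m\bigl(e_{\a_\k\times\a_\k^\times}-e_{(y_i+\a_i)\times\a_i^\times}\bigr)\right)=\prod_{i=1}^m\bigl(E_{\a_\k}-U^{y_i}E_{\a_i}U^{-y_i}\bigr),
\]
which is precisely the element whose non-vanishing is asserted in the corollary.

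Finally, Theorem~\ref{thm:faithful} asserts that $\psi$ is injective if and only if, for each class $\k\in\I_\m/i(K_{\m,\Gamma})$, the left-hand side of the above identity is non-zero for all $y_1,\dots,y_m\in R$ and $\a_1,\dots,\a_m\in\I_\m^+$ with $y_i+\a_i\subsetneq\a_\k$. Combining this with the displayed identity gives the stated criterion verbatim. I do not anticipate any serious obstacle here: the only content beyond quoting these two results is the routine identification $\psi\bigl(e_{(y_i+\a_i)\times\a_i^\times}\bigr)=U^{y_i}E_{\a_i}U^{-y_i}$, so the whole argument amounts to recording the dictionary between the lowercase generators of $C^*(P_{\m,\Gamma})$ and the uppercase data in $B$ and then reading off Theorem~\ref{thm:faithful}.
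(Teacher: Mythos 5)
Your proposal is correct and is exactly the argument the paper has in mind: the paper states this corollary as an ``immediate consequence'' of Theorem~\ref{thm:faithful}, and your write-up simply supplies the routine details — existence/uniqueness of $\psi$ from Proposition~\ref{prop:CDLfamily}, the identification of $\operatorname{im}\psi$ with the sub-C*-algebra generated by $\{U^x\},\{S_a\},\{E_\a\}$ (so that ``isomorphism onto'' reduces to injectivity), and the dictionary $\psi\bigl(e_{(y+\a)\times\a^\times}\bigr)=U^{y}E_{\a}U^{-y}$ coming from relation~\ref{relation:(I)}. Nothing is missing.
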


Thus, Theorem~\ref{thm:faithful} may be viewed as a uniqueness result, analogous to a Cuntz-Krieger uniqueness theorem.

\subsection{Representations coming from ideal classes}

Using the inclusion from Corollary~\ref{cor:CI}, we will view $\J_{P_{\m,\Gamma}}$ as a subsemilattice of $\J_{R\rtimes R^\times}$. The canonical action of $P_{\m,\Gamma}$ on $\J_{P_{\m,\Gamma}}^\times$ given by $(b,a)[(x+\a)\times\a^\times]=(b+ax+a\a)\times(a\a)^\times$ gives rise to an isometric representation $V$ of $P_{\m,\Gamma}$ on the Hilbert space $\mathcal{H}:=\ell^2(\J_{P_{\m,\Gamma}}^\times)$; namely, $V:P_{\m,\Gamma}\to\textup{Isom}(\mathcal{H})$ is determined on the canonical orthonormal basis by $V_{(b,a)}\delta_{(x+\a)\times\a^\times}=\delta_{(b+ax+a\a)\times(a\a)^\times}$.

\begin{proposition}
For each class $\k\in \I_\m/i(K_{\m,\Gamma})$, the subspace $\mathcal{H}_\k:=\overline{\spn}(\{\delta_{(z+\b)\times\b^\times} : \b\in\k\})\subseteq \mathcal{H}$ is invariant under $V_{(b,a)}$ for all $(b,a)\in P_{\m,\Gamma}$. 
Let $V_{(b,a)}^\k$ be the restriction of $V_{(b,a)}$ to $\mathcal{H}_\k$. For $x\in R$ and $\a\in\I_\m^+$, let $P_{x+\a}^\k$ be the orthogonal projection from $\mathcal{H}_\k$ onto the subspace $\overline{\spn}(\{\delta_{(z+\b)\times\b^\times} : z+\b\subseteq x+\a\})$. 
Then there is a representation $\psi_\k:C^*(P_{\m,\Gamma})\to\mathcal{B}(\mathcal{H}_\k)$ such that $\psi_\k(v_{(b,a)})=V_{(b,a)}^\k$ and $\psi_\k(e_{(x+\a)\times\a^\times})=P^\k_{x+\a}$. Moreover, $\psi_\k$ is faithful.
\end{proposition}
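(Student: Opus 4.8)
The plan is to realize $\psi_\k$ as the restriction to a reducing subspace of a single covariant representation defined on all of $\mathcal{H}=\ell^2(\J_{P_{\m,\Gamma}}^\times)$, and then to deduce faithfulness from the groupoid picture of Section~\ref{subsec:gpoidmodel} together with Proposition~\ref{prop:faithfulondiagonal}. First I would record the invariance claim: since every $a\in R_{\m,\Gamma}$ has $i(a)\in i(K_{\m,\Gamma})$, the ideal $a\b$ lies in the same class $\k\in\I_\m/i(K_{\m,\Gamma})$ as $\b$, and as $V_{(b,a)}\delta_{(z+\b)\times\b^\times}=\delta_{(b+az+a\b)\times(a\b)^\times}$ the subspace $\mathcal{H}_\k$ is invariant under every $V_{(b,a)}$. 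For the diagonal part I would work on all of $\mathcal{H}$: for $X=(x+\a)\times\a^\times\in\J_{P_{\m,\Gamma}}^\times$ let $E_X$ be the orthogonal projection onto $\overline{\spn}\{\delta_Z:Z\subseteq X\}$. Because each $E_X$ is diagonal in the basis $\{\delta_Y\}$ and $\mathcal{H}_\k$ is spanned by a subset of this basis, $\mathcal{H}_\k$ is automatically $E_X$-invariant, with $E_X|_{\mathcal{H}_\k}=P^\k_{x+\a}$.

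Next I would check that $(E,V)$ is a covariant representation of the system $(C_u^*(\J_{P_{\m,\Gamma}}),P_{\m,\Gamma},\alpha)$, so that the universal property produces $\psi\colon C^*(P_{\m,\Gamma})\to\mathcal{B}(\mathcal{H})$ with $\psi(v_{(b,a)})=V_{(b,a)}$ and $\psi(e_X)=E_X$; restricting to the reducing subspace $\mathcal{H}_\k$ then yields $\psi_\k$ with the asserted values on generators. The relations $E_\emptyset=0$, $E_{P_{\m,\Gamma}}=1$ and $E_XE_Y=E_{X\cap Y}$ are immediate from the description of $E_X$ as a projection onto a down-set, since $Z\subseteq X$ and $Z\subseteq Y$ iff $Z\subseteq X\cap Y$, and $V_{(b,a)}V_{(d,c)}=V_{(b,a)(d,c)}$ is a direct computation. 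The substantive point is the covariance identity $V_{(b,a)}E_XV_{(b,a)}^*=E_{(b,a)X}$. As $V_{(b,a)}$ is an isometry this reduces to showing that the range of $E_{(b,a)X}$ is exactly $V_{(b,a)}(\mathrm{ran}\,E_X)$, i.e.\ that every $W=(z+\c)\times\c^\times$ with $W\subseteq (b,a)X=(b+ax+a\a)\times(a\a)^\times$ has the form $(b,a)Z$ for some $Z\subseteq X$. Here I would use that $\c\subseteq a\a$ forces $a\mid\c$ with $\c/a\in\I_\m^+$, and that $z\in b+ax+a\a$ forces $(z-b)/a\in R$; the element $Z=\big(\tfrac{z-b}{a}+\tfrac1a\c\big)\times(\tfrac1a\c)^\times$ then lies in $\J_{P_{\m,\Gamma}}^\times$, satisfies $Z\subseteq X$, and maps to $W$.

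Finally, for faithfulness I would invoke Proposition~\ref{prop:faithfulondiagonal}, reducing to faithfulness of $\psi_\k$ on $C_u^*(\J_{P_{\m,\Gamma}})\cong C(\Omega_R^\m)$. Under this identification $\psi_\k$ acts diagonally, with the character $\omega_{(z+\b)}\in\Omega_R^\m$ (sending $e_X$ to $1$ if $(z+\b)\times\b^\times\subseteq X$ and to $0$ otherwise) occurring at the basis vector $\delta_{(z+\b)\times\b^\times}$ for $\b\in\k$. Such a diagonal representation is faithful precisely when $\{\omega_{(z+\b)}:\b\in\k\}$ is dense in $\Omega_R^\m$. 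To obtain density I would fix an integral representative $\b_0\in\k$ coprime to $\m_0$ and take the finite-level point $w_0:=\omega_{\b_0}$; since all valuations of $\b_0$ are finite its ideal part $\bar{\mathbf{a}}$ satisfies $Z(\bar{\mathbf{a}})=\emptyset$, so Proposition~\ref{prop:quasiorbits}(1) gives $\overline{G_{\m,\Gamma}.w_0}=C_\emptyset=\Omega_R^\m$. Because multiplication by $K_{\m,\Gamma}$ is trivial on $\I_\m/i(K_{\m,\Gamma})$ and translations do not alter the ideal part, every point of the orbit $G_{\m,\Gamma}.w_0$ is again of the form $\omega_{(z+\b)}$ with $\b\in\k$; hence $\{\omega_{(z+\b)}:\b\in\k\}$ contains the dense orbit $G_{\m,\Gamma}.w_0$ and is therefore dense.

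The main obstacle is the covariance verification in the second step — concretely, the claim that the down-set of $(b,a)X$ lies in the image of multiplication by $(b,a)$, so that no extra basis vectors appear in the range of $E_{(b,a)X}$. Everything else is either bookkeeping or a short deduction; in particular, once the quasi-orbit computation of Proposition~\ref{prop:quasiorbits} is in hand, faithfulness follows with almost no further work, and crucially the argument needs only a single integral representative of $\k$ rather than any equidistribution input.
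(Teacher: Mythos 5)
Your proof is correct, and while the existence half follows essentially the paper's strategy, your faithfulness argument takes a genuinely different route. For existence, the paper checks the defining relations \ref{relation:(I)} and \ref{relation:(II)} for the families $\{V_{(b,a)}^\k\}$ and $\{P_{x+\a}^\k\}$ directly on $\mathcal{H}_\k$ and invokes the universal property; you instead build a covariant pair $(E,V)$ on all of $\ell^2(\J_{P_{\m,\Gamma}}^\times)$ (your verification that every $W\subseteq(b,a)X$ in $\J_{P_{\m,\Gamma}}^\times$ equals $(b,a)Z$ for some $Z\subseteq X$ is the same computation the paper leaves implicit) and then compress to $\mathcal{H}_\k$. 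One small point you should make explicit there: to call $\mathcal{H}_\k$ reducing you need invariance under the adjoints $V_{(b,a)}^*$ as well, which follows from the same class computation, since $V_{(b,a)}^*\delta_{(z+\c)\times\c^\times}$ is either $0$ or $\delta_Z$ with the ideal of $Z$ equal to $a^{-1}\c\in\k$. The real divergence is in faithfulness. The paper applies its uniqueness criterion, Theorem~\ref{thm:faithful}: for each class $\tilde{\k}$ and each family $y_i+\a_i\subsetneq\a_{\tilde{\k}}$ one must exhibit $\b\in\k$ with $\b\subseteq\a_{\tilde{\k}}$ and $y_i+\a_i\not\subseteq\b$, and this is achieved by choosing a prime $\p\in\k\tilde{\k}^{-1}$ missing the finitely many $y_i$, using the classical fact (\cite[Theorem~7.2]{MilCFT}) that every class of $\I_\m/i(K_{\m,\Gamma})$ contains infinitely many prime ideals. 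You bypass Theorem~\ref{thm:faithful} entirely, going straight to Proposition~\ref{prop:faithfulondiagonal}: the diagonal part of $\psi_\k$ is a direct sum of evaluations at the characters $\omega_{(z+\b)}$, $\b\in\k$, and this set contains the $G_{\m,\Gamma}$-orbit of $\omega_{\b_0}$, which is dense by Proposition~\ref{prop:quasiorbits}(1) since $Z(\bar{\mathbf{a}})=\emptyset$ for an integral representative (here you should note that integrality of an orbit point $[n,k\b_0]\in\Omega_R^\m$ forces $n\in\hat{R}_S\cap R_\m^{-1}R=R$ and $k\b_0\in\I_\m^+$, so the point really is of the form $\omega_{(z+\b)}$ with $z\in R$, $\b\in\k$). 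Your route buys independence from the Dirichlet-type input about primes in ray classes -- a single integral representative of $\k$ suffices -- at the cost of invoking the adelic identification of the diagonal (Proposition~\ref{prop:adelicdescription}); the paper's route showcases Theorem~\ref{thm:faithful} as a working Cuntz--Krieger-type uniqueness theorem. The two arguments are not as far apart as they look, however: Theorem~\ref{thm:faithful} is itself proved from Propositions~\ref{prop:faithfulondiagonal} and~\ref{prop:quasiorbits}, so both ultimately rest on the same groupoid facts.
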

\begin{proof}
It is easy to see that $\mathcal{H}_\k$ is invariant. A calculation shows that the collections $\{V_{(b,a)}^\k: (b,a)\in P_{\m,\Gamma}\}$ and $\{0\}\cup \{P_{x+\a}^\k : x\in R, \a\in\I_\m^+\}$ satisfy the defining relations \ref{relation:(I)} and \ref{relation:(II)} for $C^*(P_{\m,\Gamma})$, so existence of $\psi_\k$ follows from the defining universal property of $C^*(P_{\m,\Gamma})$. For each $\tilde{\k}\in\I_\m/i(K_{\m,\Gamma})$, let $\a_{\tilde{\k}} \in\tilde{\k}$ be an integral ideal. By Theorem~\ref{thm:faithful}, injectivity of $\psi_\k$ will follow if we show that for each $\tilde{\k}$, we have $\prod_{i=1}^m(P_{\a_{\tilde{\k}}}^\k-P_{y_i+\a_i}^\k)\neq 0$ for any $y_1,...,y_m\in R$ and $\a_1,...,\a_m\in\I_\m^+$ such that $y_i+\a_i\subsetneq \a_{\tilde{\k}}$. For this, it suffices to find $\b\in\k$ such that $\b\subseteq \a_{\tilde{\k}}$ and $y_i+\a_i\not\subseteq \b$. By \cite[Theorem~7.2]{MilCFT}, the class $\k\tilde{\k}^{-1}$ contains infinitely many prime ideals, so we can choose a prime $\p\in \k\tilde{\k}^{-1}$ such that $y_1,y_2,...,y_m\not\in\p$. Then $\b:=\p\a_{\tilde{\k}}\in \k$ clearly satisfies $\b\subsetneq\a_{\tilde{\k}}$, and we also have $y_i+\a_i\not\subseteq\b$ because $\b\subseteq \p$, and $y_i+\a_i\subseteq\b$ would imply $y_i\in\b$.
\end{proof}

\begin{remark}
In the case of trivial $\m$, it is shown in \cite[Section~4]{CDL} that the direct sum $\oplus_{\k\in\Cl(K)}\psi_\k$ is faithful.
\end{remark}

\section{The primitive ideal space}\label{sec:primideals}

Given a C*-algebra $B$, let $\Prim(B)$ denote the primitive ideal space of $B$. If $X\subseteq B$ is any subset, we let $\langle X\rangle_B$ denote the (closed, two-sided) ideal of $B$ generated by $X$; by convention, $\langle\emptyset\rangle:=\{0\}$.

Continuing with the notation from the previous section, we let $\P_K^\m:=\P_K\setminus S$ denote the collection of (non-zero) prime ideals of $R$ that do not divide $\m_0$, let $P_{\m,\Gamma}=R\rtimes R_{\m,\Gamma}$, and let $G_{\m,\Gamma}=(R_\m^{-1}R)\rtimes K_{\m,\Gamma}$. 

Equip $2^{\P_K^\m}$ with the power-cofinite topology. Recall that a base for the power-cofinite topology is given by the sets $U_F:=\{T\in 2^{\P_K^\m} : T\cap F=\emptyset\}$ for $F\subseteq 2^{\P_K^\m}$ finite. We may view both $2^{\P_K^\m}$ and $\Prim(C^*(P_{\m,\Gamma}))$ as partially ordered sets with respect to the orders given by inclusion of subsets and inclusion of ideals, respectively. The following theorem is a strengthening and generalization of \cite[Theorem~3.6]{EL}. Our explicit description is motivated by the explicit description of the primitive ideals of $C^*(R\rtimes R^\times)$ given in \cite{Li4,Li5}.

\begin{theorem}\label{thm:primideals}
For each $\p\in \P_K^\m$, let $f_\p$ denote the order of $[\p]\in\I_\m/i(K_{\m,\Gamma})$, so that $\p^{f_\p}=t_\p R$ for some $t_\p\in R_{\m,\Gamma}$.
For each subset $A\subseteq\P_K^\m$, let 
	\[
	I_A:=\left\langle\left\{1-\sum_{x\in R/t_\p R}v_{(x,t_\p)}v_{(x,t_\p)}^* : \p\in A\right\}\right\rangle_{C^*(P_{\m,\Gamma})}.
	\]
Then $I_A$ is a primitive ideal, and the map $2^{\P_K^\m}\to \Prim(C^*(P_{\m,\Gamma}))$ given by $A\mapsto I_A$ is an order-preserving homeomorphism.
\end{theorem}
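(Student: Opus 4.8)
The plan is to identify $\Prim(C^*(P_{\m,\Gamma}))$ via the groupoid model from Proposition~\ref{prop:gpoidisom}, using the structure of quasi-orbits established in Proposition~\ref{prop:quasiorbits}. Since $C^*(P_{\m,\Gamma})\cong C^*(G_{\m,\Gamma}\ltimes\Omega_R^\m)$ with $G_{\m,\Gamma}$ amenable and the groupoid essentially principal (shown in the proof of Proposition~\ref{prop:faithfulondiagonal}), I would invoke \cite[Lemma~4.6]{SW} to get a bijection between primitive ideals and quasi-orbits in $\Omega_R^\m$. By Proposition~\ref{prop:quasiorbits}(1), the quasi-orbit of a point $[\mathbf{b},\bar{\mathbf{a}}]$ depends only on the set $Z(\bar{\mathbf{a}})=\{\p : \bar{\mathbf{a}}_\p=0\}\subseteq\P_K^\m$, and distinct subsets $A\subseteq\P_K^\m$ give distinct quasi-orbits $C_A$ (this uses the second part of Proposition~\ref{prop:quasiorbits} to produce points realizing any prescribed $A$ as its zero-set with trivial isotropy). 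Thus the first step is to set up the order-reversing-or-preserving bijection $A\mapsto \overline{\text{quasi-orbit}}\mapsto I_A$ as sets, and to verify it is a bijection onto $\Prim$.

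Next I would match the topology. The Sims--Williams correspondence sends the quasi-orbit space (with its natural topology coming from the Jacobson topology on $\Prim$) to $\Prim$ homeomorphically, so I must show the quasi-orbit space of $\Omega_R^\m$ is homeomorphic to $2^{\P_K^\m}$ with the power-cofinite topology. The containment $C_A\subseteq C_B \iff B\subseteq A$ (larger zero-set means more vanishing coordinates, hence a smaller closed set $C_A$) identifies the specialization order, and one checks that closure in the quasi-orbit space corresponds exactly to the basic open sets $U_F$; the key point is that a net $A_i\to A$ in the cofinite topology corresponds to the quasi-orbit $C_A$ lying in the closure of the $C_{A_i}$, which follows from the density statements in Proposition~\ref{prop:quasiorbits}. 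I would verify that $A\mapsto I_A$ is order-preserving directly: if $A\subseteq B$ then the generating relations for $I_A$ are among those for $I_B$, so $I_A\subseteq I_B$.

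The remaining and most substantial step is to identify the abstractly-defined ideal (kernel of the representation associated to the quasi-orbit $C_A$) with the concretely-presented ideal $I_A$ generated by the elements $1-\sum_{x\in R/t_\p R}v_{(x,t_\p)}v_{(x,t_\p)}^*$ for $\p\in A$. Here the relation $\p^{f_\p}=t_\p R$ with $t_\p\in R_{\m,\Gamma}$ (available precisely because $f_\p$ is the order of $[\p]$ in $\I_\m/i(K_{\m,\Gamma})$) is what allows one to express ``$\bar{\mathbf{a}}_\p=0$'' as a C*-algebraic relation: the defect projection $1-\sum_x v_{(x,t_\p)}v_{(x,t_\p)}^*$ is nonzero exactly on the locus where the $\p$-coordinate is a unit, so quotienting by it forces $\p$-divisibility indefinitely, i.e.\ vanishing at $\p$. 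I would translate each generator, under $\vartheta$ from Proposition~\ref{prop:gpoidisom}, into a function on $\Omega_R^\m$ and show that the ideal it generates in the groupoid C*-algebra is exactly the kernel of restriction to $C_A$, using that $C_A=\{[\mathbf{b},\bar{\mathbf{a}}] : A\subseteq Z(\bar{\mathbf{a}})\}$ is the common zero set of these functions' complements.

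\emph{The main obstacle} will be this last identification: showing that the concretely-generated ideal $I_A$ coincides with the kernel of the quasi-orbit representation, rather than merely being contained in it. The containment $I_A\subseteq\ker$ is routine once one checks the defect projection vanishes on $C_A$. The reverse requires showing that modulo $I_A$ the only surviving structure is that supported on $C_A$; concretely, one must verify that the quotient $C^*(P_{\m,\Gamma})/I_A$ is (isomorphic to) the groupoid C*-algebra of the restriction to $C_A$, which amounts to checking that imposing the finitely-many Cuntz-type relations indexed by $A$ exactly cuts down the unit space to $C_A$ and introduces no further collapse. This is where I expect to lean hardest on the explicit adelic description from Proposition~\ref{prop:adelicdescription} and on the independence of $\J_{P_{\m,\Gamma}}$ (Proposition~\ref{prop:CI}), together with the faithfulness criterion of Theorem~\ref{thm:faithful} to control what the quotient map can kill.
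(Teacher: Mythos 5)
Your global architecture coincides with the paper's: groupoid model (Proposition~\ref{prop:gpoidisom}), amenability plus essential principality, the Sims--Williams correspondence between primitive ideals and quasi-orbits, and the identification of the quasi-orbit space with $2^{\P_K^\m}$ via Proposition~\ref{prop:quasiorbits}. But there is a genuine gap at the step you yourself flag as the main obstacle, and the tools you name do not close it. Theorem~\ref{thm:faithful} is a criterion for a representation of $C^*(P_{\m,\Gamma})$ to be \emph{faithful}; the quotient map by $I_A$ is not faithful for $A\neq\emptyset$, so that theorem cannot ``control what the quotient map can kill.'' What your route actually needs is: (i) a uniqueness theorem for the restricted groupoid $G_{\m,\Gamma}\ltimes C_A$ --- this is available, since Proposition~\ref{prop:quasiorbits}(2) gives density of trivial-isotropy points in every closed invariant set, so Exel's theorem applies to the restriction --- and, more seriously, (ii) a proof that the ideal of $C^*(G_{\m,\Gamma}\ltimes\Omega_R^\m)$ generated by the functions $\vartheta\big(1-\sum_{x\in R/t_\p R}v_{(x,t_\p)}v_{(x,t_\p)}^*\big)=1_{\{[\mathbf{b},\bar{\mathbf{a}}]\,:\,v_\p(\bar{\mathbf{a}})<f_\p\}}$, $\p\in A$, contains all of $C_0(\Omega_R^\m\setminus C_A)$; equivalently, that the saturation (inside the reduction groupoid) of $\bigcup_{\p\in A}\{v_\p(\bar{\mathbf{a}})<f_\p\}$ is all of $\Omega_R^\m\setminus C_A$. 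This requires an adelic approximation argument --- moving a point with $f_\p\leq v_\p(\bar{\mathbf{a}})<\infty$ into the set $\{v_\p(\bar{\mathbf{a}})<f_\p\}$ by acting with elements $(x,t_\p^{-k})$ while keeping both source and range in $\Omega_R^\m$ --- which you have not supplied. Note also that your phrase ``finitely-many Cuntz-type relations indexed by $A$'' is inaccurate: $A\subseteq\P_K^\m$ is arbitrary and typically infinite.

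The paper closes this gap by a short primitivity argument that avoids analyzing the quotient altogether. Having first established (via \cite[Lemmas~4.5,~4.6]{SW} and Proposition~\ref{prop:quasiorbits}) that \emph{every} primitive ideal has the form $C^*(G_{\m,\Gamma}\ltimes(\Omega_R^\m\setminus C_B))$ for some $B\subseteq\P_K^\m$, it writes $\vartheta(I_A)$ as the intersection of the primitive ideals containing it. If such an ideal $C^*(G_{\m,\Gamma}\ltimes(\Omega_R^\m\setminus C_B))$ contains $\vartheta(I_A)$, then each generator $1_{\{v_\p(\bar{\mathbf{a}})<f_\p\}}$, $\p\in A$, vanishes on $C_B$; since $\p\notin B$ would yield a point of $C_B$ with $v_\p(\bar{\mathbf{a}})=0<f_\p$, this forces $A\subseteq B$, hence $C^*(G_{\m,\Gamma}\ltimes(\Omega_R^\m\setminus C_A))\subseteq C^*(G_{\m,\Gamma}\ltimes(\Omega_R^\m\setminus C_B))$. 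Intersecting over all such $B$ gives the hard inclusion $C^*(G_{\m,\Gamma}\ltimes(\Omega_R^\m\setminus C_A))\subseteq\vartheta(I_A)$, and the easy inclusion you correctly identified finishes the identification $\vartheta(I_A)=C^*(G_{\m,\Gamma}\ltimes(\Omega_R^\m\setminus C_A))$. To repair your proposal, either substitute this primitivity argument for the appeal to Theorem~\ref{thm:faithful}, or carry out the saturation argument in (ii) explicitly.
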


Before we can prove Theorem~\ref{thm:primideals}, we need a preliminary result.

Each open $G_{\m,\Gamma}$-invariant subset $U\subseteq \Omega_R^\m$ gives rise to the ideal $C^*(G_{\m,\Gamma}\ltimes U)\subseteq C^*(G_{\m,\Gamma}\ltimes\Omega_R^\m)$. In particular, for each point $w\in \Omega_R^\m$, the set $\Omega_R^\m\setminus\overline{G_{\m,\Gamma}.w}$ is open and $G_{\m,\Gamma}$-invariant where, as before, $G_{\m,\Gamma}.w:=\{gw: (g,w)\in \Omega_R^\m\}$ is the orbit of $w$, and $\overline{G_{\m,\Gamma}.w}$ is the closure of $G_{\m,\Gamma}.w$, which is called the quasi-orbit of $w$. The \emph{quasi-orbit space} is given by $\mathcal{Q}(G_{\m,\Gamma}\ltimes\Omega_R^\m):=\Omega_R^\m/\sim$ where $w\sim w'$ if $\overline{G_{\m,\Gamma}.w}=\overline{G_{\m,\Gamma}.w'}$; this space was described in Proposition~\ref{prop:quasiorbits} above.

\begin{lemma}\label{lem:primideals}
For each $x\in\Omega_R^\m$, the ideal $C^*(G_{\m,\Gamma}\ltimes (\Omega_R^\m\setminus \overline{G_{\m,\Gamma}.w}))$ is primitive, and the map $\Omega_R^\m\to \Prim(C^*(G_{\m,\Gamma}\ltimes\Omega_R^\m))$ given by $w\mapsto C^*(G_{\m,\Gamma}\ltimes (\Omega_R^\m\setminus \overline{G_{\m,\Gamma}.w}))$ descends to a homeomorphism $\mathcal{Q}(G_{\m,\Gamma}\ltimes\Omega_R^\m)\simeq \Prim(C^*(G_{\m,\Gamma}\ltimes\Omega_R^\m))$.

Moreover, if $\vartheta:C^*(P_{\m,\Gamma})\cong C^*(G_{\m,\Gamma}\ltimes\Omega_R^\m)$ is the isomorphism from Proposition~\ref{prop:gpoidisom}, then $\vartheta(I_A)=C^*(G_{\m,\Gamma}\ltimes (\Omega_R^\m\setminus C_A))$ for every $A\subseteq\P_K^\m$.
\end{lemma}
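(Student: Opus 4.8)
The plan is to prove Lemma~\ref{lem:primideals} in two parts, corresponding to its two assertions, and to lean heavily on the groupoid machinery already assembled. For the first assertion, the essential input is \cite[Lemma~4.6]{SW} (cited in the introduction), which for a suitable \'etale groupoid identifies the primitive ideal space with the quasi-orbit space via the map sending a point to the ideal associated with the complement of its quasi-orbit. First I would verify that $G_{\m,\Gamma}\ltimes\Omega_R^\m$ satisfies the hypotheses of that result: it is amenable (as $G_{\m,\Gamma}$ is amenable), second-countable, and \'etale, and by Proposition~\ref{prop:quasiorbits}(2) the set of points with trivial isotropy is dense in every closed invariant subset. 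This density property is precisely what is needed to apply the Sims--Williams machinery, so the first assertion follows essentially by quoting \cite[Lemma~4.6]{SW} once these hypotheses are checked. The primitivity of each $C^*(G_{\m,\Gamma}\ltimes(\Omega_R^\m\setminus\overline{G_{\m,\Gamma}.w}))$ and the homeomorphism onto $\Prim$ both come from this.

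The substance of the lemma is the second assertion, the identification $\vartheta(I_A)=C^*(G_{\m,\Gamma}\ltimes(\Omega_R^\m\setminus C_A))$. The strategy here is to unwind the generators of $I_A$ under $\vartheta$ and match them with the open invariant set $\Omega_R^\m\setminus C_A$. Recall from Proposition~\ref{prop:quasiorbits} that $C_A=\{[\mathbf{b},\bar{\mathbf{a}}]\in\Omega_R^\m : A\subseteq Z(\bar{\mathbf{a}})\}$, so $\Omega_R^\m\setminus C_A=\{[\mathbf{b},\bar{\mathbf{a}}] : \bar{\mathbf{a}}_\p\neq 0 \text{ for some }\p\in A\}$. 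For a single prime $\p\in\P_K^\m$ with $\p^{f_\p}=t_\p R$, the key computation is to show that the generator $1-\sum_{x\in R/t_\p R}v_{(x,t_\p)}v_{(x,t_\p)}^*$ maps under $\vartheta$ to the characteristic function of the compact open set $\{[\mathbf{b},\bar{\mathbf{a}}]\in\Omega_R^\m : \bar{\mathbf{a}}_\p\neq 0\}$. Using $\vartheta(v_{(x,t_\p)})=1_{\{(x,t_\p)\}\times\Omega_R^\m}$ from Proposition~\ref{prop:gpoidisom} together with the adelic description in Proposition~\ref{prop:adelicdescription}, one computes that $\vartheta(v_{(x,t_\p)}v_{(x,t_\p)}^*)=1_{(x+\p^{f_\p})\times(\p^{f_\p})^\times}$, and summing over the $N(t_\p R)=N(\p^{f_\p})$ cosets $x\in R/t_\p R$ yields the indicator of $\{\bar{\mathbf{a}}_\p=0\}$ intersected with $\Omega_R^\m$. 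Hence the generator is $1$ minus this, which is the indicator of $\{\bar{\mathbf{a}}_\p\neq 0\}$.

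Once this single-prime identification is in hand, I would assemble the result as follows. The ideal generated by this projection in $C^*(G_{\m,\Gamma}\ltimes\Omega_R^\m)$ equals the ideal corresponding to the smallest open invariant set containing the support of the projection, namely the saturation of $\{\bar{\mathbf{a}}_\p\neq 0\}$; since this set is already invariant (the condition $\bar{\mathbf{a}}_\p\neq 0$ is preserved under the $G_{\m,\Gamma}$-action, as multiplication by elements of $K_{\m,\Gamma}$ does not change the vanishing locus at $\p\notin S$), the ideal generated is exactly $C^*(G_{\m,\Gamma}\ltimes\{\bar{\mathbf{a}}_\p\neq 0\})$. Taking the ideal generated jointly over all $\p\in A$ corresponds to the union $\bigcup_{\p\in A}\{\bar{\mathbf{a}}_\p\neq 0\}=\Omega_R^\m\setminus C_A$, giving $\vartheta(I_A)=C^*(G_{\m,\Gamma}\ltimes(\Omega_R^\m\setminus C_A))$ as desired. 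Here I would use the standard lattice isomorphism between open invariant subsets of the unit space and ideals of the groupoid C*-algebra, valid for amenable \'etale groupoids.

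The hard part will be the single-prime computation matching the algebraic generator $1-\sum_x v_{(x,t_\p)}v_{(x,t_\p)}^*$ with the adelic indicator function; this requires carefully tracking the identification of $\vartheta(e_{(x+\a)\times\a^\times})$ through Proposition~\ref{prop:adelicdescription} and checking that the cosets $x$ running over $R/t_\p R$ partition the relevant set correctly, in particular that the range projections $v_{(x,t_\p)}v_{(x,t_\p)}^*$ for distinct cosets are orthogonal and sum to the indicator of $\{\bar{\mathbf{a}}_\p=0\}$. The orthogonality and the exhaustion of $\{\bar{\mathbf{a}}_\p=0\}$ both rely on the structure of $\p^{f_\p}=t_\p R$ being principal with a generator in $R_{\m,\Gamma}$, which is exactly why $f_\p$ was defined as the order of $[\p]$ in $\I_\m/i(K_{\m,\Gamma})$; this is the point where the congruence-monoid hypothesis enters decisively.
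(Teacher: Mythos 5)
Your first paragraph reproduces the paper's argument for the first assertion: one checks that $G_{\m,\Gamma}\ltimes\Omega_R^\m$ is second countable, \'{e}tale and amenable, invokes Proposition~\ref{prop:quasiorbits}(2) for density of points with trivial isotropy in closed invariant sets, and quotes Sims--Williams (the paper uses \cite[Lemma~4.5]{SW} for primitivity of each quasi-orbit ideal and \cite[Lemma~4.6]{SW} for the homeomorphism, but this is the same mechanism). That part is fine.

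The gap is in your ``key computation'' for the second assertion, and it is a genuine one. By Proposition~\ref{prop:adelicdescription}, $\vartheta(v_{(x,t_\p)}v_{(x,t_\p)}^*)$ is the indicator of $\{[\mathbf{b},\bar{\mathbf{a}}] : v_\p(\bar{\mathbf{a}})\geq f_\p \text{ and } v_\p(\mathbf{b}-x)\geq f_\p\}$, and summing over the cosets $x\in R/t_\p R$ exhausts only the condition on $\mathbf{b}$; the sum is $1_{\{v_\p(\bar{\mathbf{a}})\geq f_\p\}}$, \emph{not} $1_{\{\bar{\mathbf{a}}_\p=0\}}$ (the condition $v_\p(\bar{\mathbf{a}})\geq f_\p$ is far weaker than $\bar{\mathbf{a}}_\p=0$, which means $v_\p(\bar{\mathbf{a}})=\infty$). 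Hence
\[
\vartheta\Big(1-\sum_{x\in R/t_\p R}v_{(x,t_\p)}v_{(x,t_\p)}^*\Big)=1_{\{[\mathbf{b},\bar{\mathbf{a}}]\,:\,v_\p(\bar{\mathbf{a}})<f_\p\}}.
\]
Your claimed answer cannot be right for a formal reason: $\{\bar{\mathbf{a}}_\p\neq 0\}$ is open but not closed in $\Omega_R^\m$, so its indicator is not continuous and is not an element of $C(\Omega_R^\m)$ at all. This error then undermines the next step: you justify skipping the saturation by asserting that the support of the image is ``already invariant,'' but the actual support $\{v_\p(\bar{\mathbf{a}})<f_\p\}$ is visibly not invariant under the partial action (acting by $(0,t_\p)$ raises $v_\p$ by $f_\p$). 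Your generate-and-saturate strategy can be repaired, since the saturation of $\{v_\p(\bar{\mathbf{a}})<f_\p\}$ is indeed $\{\bar{\mathbf{a}}_\p\neq 0\}=\Omega_R^\m\setminus C_{\{\p\}}$, but proving this is exactly the nontrivial step you have waved away: given $v_\p(\bar{\mathbf{a}})=n<\infty$ one must produce $g=(x,t_\p^m)\in G_{\m,\Gamma}$ with $mf_\p\leq n$ and $x\in R$ chosen $\p$-adically close to $\mathbf{b}$, so that $g^{-1}[\mathbf{b},\bar{\mathbf{a}}]$ lies in $\Omega_R^\m$ with $v_\p<f_\p$ --- an approximation argument of the same kind as in the proof of Proposition~\ref{prop:quasiorbits}. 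By contrast, the paper avoids saturations entirely: the computation above shows each generator lies in $C_0(\Omega_R^\m\setminus C_A)$, giving $\vartheta(I_A)\subseteq C^*(G_{\m,\Gamma}\ltimes(\Omega_R^\m\setminus C_A))$; for the reverse inclusion it writes $\vartheta(I_A)$ as the intersection of the primitive ideals containing it, notes that by the first assertion each such ideal equals $C^*(G_{\m,\Gamma}\ltimes(\Omega_R^\m\setminus C_B))$ for some $B$, and observes that containment of the generators forces the functions $1_{\{v_\p(\bar{\mathbf{a}})<f_\p\}}$, $\p\in A$, to vanish on $C_B$, whence $A\subseteq B$. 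Finally, your blanket appeal to a ``lattice isomorphism between open invariant subsets and ideals, valid for amenable \'{e}tale groupoids'' is too strong as a generality (it needs essential principality); in this setting it is legitimate, but only as a consequence of the first assertion together with Proposition~\ref{prop:quasiorbits}, and it should be cited as such.
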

\begin{proof}
Each ideal $C^*(G_{\m,\Gamma}\ltimes (\Omega_R^\m\setminus \overline{G_{\m,\Gamma}.w}))$ is primitive by \cite[Lemma~4.5]{SW}.
	
The groupoid $G_{\m,\Gamma}\ltimes\Omega_R^\m$ is second countable, \'{e}tale, and amenable. By Proposition~\ref{prop:quasiorbits}(2), we may apply \cite[Lemma~4.6]{SW} to conclude that the map $\Omega_R^\m\to \Prim(C^*(G_{\m,\Gamma}\ltimes\Omega_R^\m))$ given by $w\mapsto C^*(G_{\m,\Gamma}\ltimes (\Omega_R^\m\setminus \overline{G_{\m,\Gamma}.w}))$ descends to a homeomorphism $\mathcal{Q}(G_{\m,\Gamma}\ltimes\Omega_R^\m)\simeq \Prim(C^*(G_{\m,\Gamma}\ltimes\Omega_R^\m))$.

We now turn to the second claim. Let $A\subseteq\P_K^\m$. For each $\p\in A$, we have that
\[
\vartheta(1-\sum_{x\in R/t_\p R}v_{(x,t_\p)}v_{(x,t_\p)}^*)=1_{\{[\mathbf{b},\bar{\mathbf{a}}] : v_\p(\bar{\mathbf{a}})<f_\p\}}
\]
lies in $C_0(\Omega_R^\m\setminus C_A)$. Hence, $\vartheta(I_A)\subseteq C^*(G_{\m,\Gamma}\ltimes(\Omega_R^\m\setminus C_A))$.
	
We know that $\vartheta(I_A)=\bigcap_J J$ where $J$ runs over all primitive ideals of $C^*(G_{\m,\Gamma}\ltimes\Omega_R^\m)$ that contain $\vartheta(I_A)$, so to show that $C^*(G_{\m,\Gamma}\ltimes (\Omega_R^\m\setminus C_A))$ is contained in $\vartheta(I_A)$, it suffices to show that any primitive ideal that contains $\vartheta(I_A)$ must also contain $C^*(G_{\m,\Gamma}\ltimes(\Omega_R^\m\setminus C_A))$.
Suppose $J\in\Prim(C^*(G_{\m,\Gamma}\ltimes\Omega_R^\m))$ with $\vartheta(I_A)\subseteq J$. By part (1), $J=C^*(G_{\m,\Gamma}\ltimes (\Omega_R^\m\setminus C_B))$ for some $B\subseteq\P_K^\m$. Now, we have $1_{\{[\mathbf{b},\bar{\mathbf{a}}] : v_\p(\bar{\mathbf{a}})<f_\p\}}\in C^*(G_{\m,\Gamma}\ltimes (\Omega_R^\m\setminus C_B))$ for all $\p\in A$ which implies that $1_{\{[\mathbf{b},\bar{\mathbf{a}}] : v_\p(\bar{\mathbf{a}})<f_\p\}}$ vanishes on $C_B$ for all $\p\in A$; hence, $A\subseteq B$. Thus, $C^*(G_{\m,\Gamma}\ltimes (\Omega_R^\m\setminus C_A))\subseteq J$.
\end{proof}

We are now ready to prove Theorem~\ref{thm:primideals}.

\begin{proof}[Proof of Theorem~\ref{thm:primideals}]
By Proposition~\ref{prop:quasiorbits}(1) and Lemma~\ref{lem:primideals}(1), the map $A\mapsto C^*(G_{\m,\Gamma}\ltimes (\Omega_R^\m\setminus C_A))$ is a order-preserving bijection from $2^{\P_K^\m}$ onto $\Prim(C^*(G_{\m,\Gamma}\ltimes \Omega_R^\m)$.  The proof that this map is a homeomorphism is analogous to the proof of \cite[Proposition~2.4]{LR2}. Thus, Theorem~\ref{thm:primideals} follows from Lemma~\ref{lem:primideals}(2).
\end{proof}

\begin{corollary}\label{cor:primideal}
The ideal $I_{\P_K^\m}$ is the unique maximal ideal of $C^*(P_{\m,\Gamma})$, and the map $\p\mapsto I_{\{\p\}}$ defines a bijection from $\P_K^\m$ onto the set of minimal primitive ideals of $C^*(P_{\m,\Gamma})$. 
\end{corollary}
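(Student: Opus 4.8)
The plan is to deduce both assertions from the order isomorphism provided by Theorem~\ref{thm:primideals}: the map $A\mapsto I_A$ is a bijection from $2^{\P_K^\m}$ onto $\Prim(C^*(P_{\m,\Gamma}))$, and it is an order isomorphism, so in particular $I_A\subseteq I_B$ if and only if $A\subseteq B$. Thus every question about the inclusion order on the primitive ideal space translates verbatim into a question about the inclusion order on the power set $(2^{\P_K^\m},\subseteq)$, and both claims become elementary order-theoretic facts once one $C^*$-algebraic input is added. Note also that $\P_K^\m$ is infinite, so the top element $\P_K^\m$ and the singletons $\{\p\}$ are genuine elements.

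For the first claim, I would first observe that $\P_K^\m$ is the largest element of $(2^{\P_K^\m},\subseteq)$, so $I_{\P_K^\m}$ is the largest element of $\Prim(C^*(P_{\m,\Gamma}))$ under inclusion; in particular every primitive ideal is contained in $I_{\P_K^\m}$, which is proper since it is primitive. The extra ingredient I would invoke is the standard fact that in a unital $C^*$-algebra every maximal (proper, two-sided) ideal is closed and primitive, because the quotient by it is a simple unital $C^*$-algebra and hence primitive. Granting this, any maximal ideal $M$ equals $I_A$ for some $A\subseteq\P_K^\m$, whence $M=I_A\subseteq I_{\P_K^\m}\subsetneq C^*(P_{\m,\Gamma})$, and maximality of $M$ forces $M=I_{\P_K^\m}$. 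Conversely, $I_{\P_K^\m}$ is proper and therefore lies inside some maximal ideal, which by the same argument must be $I_{\P_K^\m}$ itself; hence $I_{\P_K^\m}$ is maximal. This shows $I_{\P_K^\m}$ is the unique maximal ideal.

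For the second claim, I would first note that $I_\emptyset=\{0\}$ is itself primitive, so $C^*(P_{\m,\Gamma})$ is a primitive $C^*$-algebra and $\{0\}$ is the unique minimum of $\Prim$; the minimal primitive ideals in the intended sense are therefore the minimal \emph{nonzero} ones, i.e.\ the atoms sitting directly above $\{0\}$. Since $A\mapsto I_A$ is an injective order isomorphism with $I_A=\{0\}$ exactly when $A=\emptyset$, a primitive ideal $I_A$ is a minimal nonzero primitive ideal precisely when $A$ is a minimal nonempty subset of $\P_K^\m$. The minimal nonempty subsets of $\P_K^\m$ are exactly the singletons $\{\p\}$, and $\p\mapsto\{\p\}$ is a bijection from $\P_K^\m$ onto the set of singletons; composing with $A\mapsto I_A$ yields the asserted bijection $\p\mapsto I_{\{\p\}}$ onto the set of minimal nonzero primitive ideals.

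The only step that is not purely formal is the passage from ``largest primitive ideal'' to ``unique maximal ideal'': a priori the algebra could possess non-primitive proper ideals lying above $I_{\P_K^\m}$, and this possibility is excluded precisely by the fact that maximal ideals are automatically primitive in the unital setting. I expect this to be the sole point requiring care; the identification of $\P_K^\m$ as the top element of $2^{\P_K^\m}$ and of the singletons as its atoms are immediate, as is the reduction to the order structure once Theorem~\ref{thm:primideals} is in hand.
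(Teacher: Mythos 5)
Your proof is correct and takes essentially the same route as the paper, whose entire argument is the one-line observation that the bijection $A\mapsto I_A$ of Theorem~\ref{thm:primideals} is inclusion-preserving. The additional details you supply---that maximal ideals in a unital C*-algebra are automatically primitive (needed to pass from ``largest primitive ideal'' to ``unique maximal ideal''), and that $I_\emptyset=\{0\}$ is itself primitive so that ``minimal primitive ideals'' must be read as minimal \emph{nonzero} ones---are precisely the standard facts the paper leaves implicit, and you handle both correctly.
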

\begin{proof}
This follows from Theorem~\ref{thm:primideals} since the bijection $A\mapsto I_A$ is inclusion-preserving.
\end{proof}

\section{The boundary quotient}\label{sec:bq}
By Corollary~\ref{cor:primideal}, the ideal $I_{\P_K^\m}$ is the unique maximal ideal of $C^*(P_{\m,\Gamma})$. The C*-algebra $C^*(P_{\m,\Gamma})/I_{\P_K^\m}$ is the \emph{boundary quotient} of $C^*(P_{\m,\Gamma})$, as defined in \cite[Section~7]{Li3} (see also \cite[Chapter~5.7]{Li7}). We now give a description of $C^*(P_{\m,\Gamma})/I_{\P_K^\m}$ as a semigroup crossed product. This generalizes the well-known semigroup crossed product description of the ring C*-algebra of $R$.

Each $(b,a)\in P_{\m,\Gamma}$ gives rise to an injective continuous map $\hat{R}_S\to\hat{R}_S$ given by $(b,a)\mathbf{x}:=b+a\mathbf{x}$; let $\tau_{(b,a)}$ be the corresponding *-endomorphism of $C(\hat{R}_S)$. Then $(C(\hat{R}_S),P_{\m,\Gamma},\tau)$ is a semigroup dynamical system, so we may form the crossed product C*-algebra $C(\hat{R}_S)\rtimes_\tau P_{\m,\Gamma}$. For $(b,a)\in P_{\m,\Gamma}$, let $w_{(b,a)}$ be the corresponding isometry in $C(\hat{R}_S)\rtimes_\tau P_{\m,\Gamma}$.

\begin{proposition}\label{prop:boundaryquotient}
There is a surjective *-homomorphism $\pi:C^*(P_{\m,\Gamma})\to C(\hat{R}_S)\rtimes_\tau P_{\m,\Gamma}$ such that 
\begin{equation*}\label{eqn:boundaryquotientmap}
\pi(v_{(b,a)})=w_{(b,a)}\quad \text{ and }\quad \pi(e_{(x+\a)\times\a^\times})=1_{x+\hat{\a}}
\end{equation*}
for all $(b,a)\in P_{\m,\Gamma}$ and $(x+\a)\times\a^\times\in\J_{P_{\m,\Gamma}}^\times$, where $\hat{\a}$ denotes the closed ideal of $\hat{R}_S$ generated by $\a$. Moreover, $\ker\pi=I_{\P_K^\m}$, so we get an isomorphism $C^*(P_{\m,\Gamma})/I_{\P_K^\m}\cong C(\hat{R}_S)\rtimes_\tau P_{\m,\Gamma}$.
\end{proposition}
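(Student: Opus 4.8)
The plan is to construct $\pi$ from the universal property recorded in Proposition~\ref{prop:CDLfamily}, verify surjectivity, check the inclusion $I_{\P_K^\m}\subseteq\ker\pi$ directly on the generators of $I_{\P_K^\m}$, and then promote this to an equality using the simplicity of the boundary quotient.

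To build $\pi$, I would exhibit inside $C(\hat{R}_S)\rtimes_\tau P_{\m,\Gamma}$ a family satisfying the ``uppercase'' relations \textup{(Ta)}--\textup{(Td)}, namely $U^x:=w_{(x,1)}$, $S_a:=w_{(0,a)}$, and $E_\a:=1_{\hat{\a}}$ (image of the indicator of the clopen ideal $\hat{\a}\subseteq\hat{R}_S$). The semigroup and (co)isometry relations in \textup{(Ta)} are immediate from $w$ being a homomorphism into isometries and from $\sigma_{(x,1)}\colon\mathbf{y}\mapsto x+\mathbf{y}$ being a homeomorphism of $\hat{R}_S$, so that $w_{(x,1)}$ is unitary. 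Relation \textup{(Tb)} reduces to $1_{\hat{\a}}1_{\hat{\b}}=1_{\widehat{\a\cap\b}}$, using the prime-by-prime identity $\hat{\a}\cap\hat{\b}=\widehat{\a\cap\b}$, while \textup{(Tc)} follows from the covariance condition~\eqref{eqn:covariance} together with $a\hat{\b}=\widehat{a\b}$. For \textup{(Td)} I would use covariance to rewrite $w_{(x,1)}1_{\hat{\a}}w_{(x,1)}^*=1_{x+\hat{\a}}$ and then observe that $x+\hat{\a}=\hat{\a}$ when $x\in\a$, whereas $(x+\hat{\a})\cap\hat{\a}=\emptyset$ when $x\notin\a$ (since $x\in\a$ iff $v_\p(x)\geq v_\p(\a)$ for all $\p\notin S$). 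The universal property of Proposition~\ref{prop:CDLfamily} then produces $\pi$, and $\pi(v_{(b,a)})=\pi(u^bs_a)=w_{(b,a)}$ and $\pi(e_{(x+\a)\times\a^\times})=w_{(x,1)}1_{\hat{\a}}w_{(x,1)}^*=1_{x+\hat{\a}}$ recover the stated formulas.

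Surjectivity is then straightforward: the image contains every $w_{(b,a)}$ and every $1_{x+\hat{\a}}$, and since $R$ is dense in $\hat{R}_S$ the clopen cosets $x+\hat{\a}$ form a basis for the topology, so their indicator functions generate $C(\hat{R}_S)$; together with the isometries these generate the crossed product. For the inclusion $I_{\P_K^\m}\subseteq\ker\pi$ I would show $\pi$ annihilates each defining generator, computing $\pi\bigl(\sum_{x\in R/t_\p R}v_{(x,t_\p)}v_{(x,t_\p)}^*\bigr)=\sum_{x}1_{x+\widehat{t_\p R}}$; by the Chinese Remainder Theorem $\hat{R}_S/\widehat{t_\p R}\cong R/t_\p R$, so these cosets partition $\hat{R}_S$ and the sum equals $1$, whence $\pi(1-\sum_x v_{(x,t_\p)}v_{(x,t_\p)}^*)=0$.

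It remains to prove $\ker\pi\subseteq I_{\P_K^\m}$, and this is where the argument really turns. The previous steps give a surjective $*$-homomorphism $\bar\pi\colon C^*(P_{\m,\Gamma})/I_{\P_K^\m}\to C(\hat{R}_S)\rtimes_\tau P_{\m,\Gamma}$. By Corollary~\ref{cor:primideal}, $I_{\P_K^\m}$ is the \emph{unique} maximal ideal of the unital algebra $C^*(P_{\m,\Gamma})$, so the only ideals containing it are itself and the whole algebra; hence the boundary quotient is simple. A nonzero $*$-homomorphism out of a simple C*-algebra is automatically injective, so $\bar\pi$ is an isomorphism as soon as its target is nonzero, and then $\ker\pi=I_{\P_K^\m}$. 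The main obstacle is therefore exactly this nonvanishing of $C(\hat{R}_S)\rtimes_\tau P_{\m,\Gamma}$: I would settle it by exhibiting an explicit nonzero covariant representation of $(C(\hat{R}_S),P_{\m,\Gamma},\tau)$, e.g.\ letting $C(\hat{R}_S)$ act by multiplication on $L^2(\hat{R}_S,\mu)$ ($\mu$ normalized Haar measure) and setting $V_{(b,a)}\xi=\mu(a\hat{R}_S)^{-1/2}\,(\xi\circ\sigma_{(b,a)}^{-1})\,1_{b+a\hat{R}_S}$, which is a genuine covariant pair. (Alternatively, one can identify $C(\hat{R}_S)\rtimes_\tau P_{\m,\Gamma}$ with the manifestly nonzero groupoid C*-algebra $C^*(G_{\m,\Gamma}\ltimes C_{\P_K^\m})$ via Proposition~\ref{prop:gpoidisom} and the ideal $I_{\P_K^\m}=\vartheta^{-1}(C^*(G_{\m,\Gamma}\ltimes(\Omega_R^\m\setminus C_{\P_K^\m})))$.) This yields $\ker\pi=I_{\P_K^\m}$ and hence the claimed isomorphism $C^*(P_{\m,\Gamma})/I_{\P_K^\m}\cong C(\hat{R}_S)\rtimes_\tau P_{\m,\Gamma}$.
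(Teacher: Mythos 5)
Your proposal is correct and follows essentially the same route as the paper: build $\pi$ from the universal property of $C^*(P_{\m,\Gamma})$ (you verify the relations \textup{(Ta)}--\textup{(Td)} of Proposition~\ref{prop:CDLfamily}, while the paper checks the equivalent relations \ref{relation:(I)} and \ref{relation:(II)}), prove surjectivity from density of the span of the coset indicators, kill the generators of $I_{\P_K^\m}$ via the partition $\hat{R}_S=\bigsqcup_{x\in R/t_\p R}(x+\widehat{t_\p R})$, and conclude $\ker\pi=I_{\P_K^\m}$ from maximality of $I_{\P_K^\m}$. Your explicit verification that $C(\hat{R}_S)\rtimes_\tau P_{\m,\Gamma}$ is nonzero (via a concrete covariant representation on $L^2(\hat{R}_S,\mu)$) fills in a point the paper leaves implicit, but it does not change the structure of the argument.
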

\begin{proof}
Consider the collection of projections $\{1_{x+\hat{\a}} : x\in R, \a\in\I_\m^+\}$ and the collection of isometries $\{w_{(b,a)} : (b,a)\in P_{\m,\Gamma}\}$. A calculation verifies that these collections satisfy the defining relations \ref{relation:(I)} and \ref{relation:(II)} for $C^*(P_{\m,\Gamma})$, so the universal property of $C^*(P_{\m,\Gamma})$ gives us a *-homomorphism $\pi:C^*(P_{\m,\Gamma})\to C(\hat{R}_S)\rtimes_\tau P_{\m,\Gamma}$ such that \begin{equation*}
\pi(v_{(b,a)})=w_{(b,a)}\quad \text{ and }\quad \pi(e_{(x+\a)\times\a^\times})=1_{x+\hat{\a}}
\end{equation*}
for all $(b,a)\in P_{\m,\Gamma}$ and $(x+\a)\times\a^\times\in\J_{P_{\m,\Gamma}}^\times$.
Since $\spn\{1_{x+\hat{\a}} : x\in R, \a\in\I_\m^+\}$ is dense in $C(\hat{R}_S)$, we see that 
\[
\{1_{x+\hat{\a}} : x\in R, \a\in\I_\m^+\}\cup \{w_{(b,a)} : (b,a)\in P_{\m,\Gamma}\}
\] 
generates $C(\hat{R}_S)\rtimes_\tau P_{\m,\Gamma}$ as a C*-algebra, so $\pi$ is surjective.

It remains to show that $\ker\pi=I_{\P_K^\m}$. Since $I_{\P_K^\m}$ is a maximal ideal, it suffices to show that $I_{\P_K^\m}\subseteq \ker\pi$. For every $\a\in\I_\m^+$, the canonical embedding $R\hookrightarrow\hat{R}_S$ induces an isomorphism $R/\a\cong\hat{R}_S/\hat{\a}$, so $\hat{R}_S=\bigsqcup_{x\in R/\a}(x+\hat{\a})$. Hence,
\[
\pi\big(1-\sum_{x\in R/t_\p R}v_{(x,t_\p)}v_{(x,t_\p)}^*\big)=1-\sum_{x\in R/t_\p R}1_{x+t_\p\hat{R}_S}=0.
\]
Since the projections $1-\sum_{x\in R/t_\p R}v_{(x,t_\p)}v_{(x,t_\p)}^*$ for $\p\in\P_K^\m$ generate $I_{\P_K^\m}$, we are done.
\end{proof}

\section{Functoriality}\label{sec:functoriality}

As before, let $K$ be a number field with ring of integers $R$. Recall that the number-theoretic data for our construction consists of a pair $(\m,\Gamma)$ where $\m$ a modulus for $K$ and $\Gamma$ a subgroup of $(R/\m)^*$. The set of such pairs carries a canonical partial order, which we now describe.

Let $\m$ and $\n$ be moduli for $K$, and let $\Gamma$ and $\Lambda$ be subgroups of $(R/\m)^*$ and $(R/\n)^*$, respectively. Denote by $\textup{pr}_\m:R_\m\to(R/\m)^*$ and $\textup{pr}_\n:R_\n\to(R/\n)^*$ the canonical projection maps. Recall that $\m\mid\n$ if $\m_0\mid\n_0$ and $\m_\infty\leq \n_\infty$. 
If $\m\mid \n$, then we have a canonical inclusion of semigroups $R_\n\subseteq R_\m$, and a canonical surjective group homomorphism $\pi_{\n,\m}:(R/\n)^*\to (R/\m)^*$ such that the following diagram commutes:
\begin{equation}\label{diagram:moduli}
\xymatrix{
R_\n\ar@{^{(}->}[r]^{\textup{incl}}\ar@{->>}[d]^{\textup{pr}_\n} &  R_\m \ar@{->>}[d]^{\textup{pr}_\m}\\
(R/\n)^* \ar@{->>}[r]^{\pi_{\n,\m}}& (R/\m)^*.
}\end{equation}

We define $(\m,\Gamma)\leq (\n,\Lambda)$ if and only if $\m\mid \n$ and $\pi_{\n,\m}(\Lambda)\subseteq \Gamma$. We will show next that our construction respects this ordering, that is, it is functorial in the appropriate sense. First, we need a lemma.

\begin{lemma}\label{lem:detectsw}
Let $\m$ be a modulus for $K$, and suppose that $w$ is a real embedding of $K$. Then $w\mid\m_\infty$ if and only if $w(x)>0$ for all $x\in R_{\m,1}$.
\end{lemma}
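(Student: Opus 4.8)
The forward implication is essentially a matter of unwinding definitions, so I would prove the two directions separately, with all the substance lying in the converse. For the ``only if'' direction, suppose $w\mid\m_\infty$. By the definition of the residue map, any $x\in R_{\m,1}$ satisfies $[x]_\m=1$, which forces $\textup{sign}(w(x))=+1$ for every real embedding dividing $\m_\infty$; in particular $w(x)>0$. Thus the positivity condition holds for all $x\in R_{\m,1}$, as claimed.

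For the converse I would argue by contraposition: assuming $w\nmid\m_\infty$, the plan is to exhibit a single element $x\in R_{\m,1}$ with $w(x)<0$, which breaks the positivity condition. The key tool is the signature-approximation result already invoked in the proof of Lemma~\ref{prop:onto}, namely \cite[Proposition~2.2(i)]{Nar}, which guarantees that the coset $1+\m_0$ contains elements of every prescribed signature. I would apply this with the signature chosen to be $+1$ at each real embedding $w'\mid\m_\infty$ and $-1$ at $w$ (and arbitrary at the remaining real embeddings), producing some $x\in 1+\m_0$ with these signs. Because $x\equiv 1\bmod\m_0$, for each $\p\mid\m_0$ we have $x\equiv 1\not\equiv 0\bmod\p$, so $v_\p(x)=0$ and hence $x\in R_\m$; combined with positivity at every $w'\mid\m_\infty$ this gives $[x]_\m=1$, so $x\in R_{\m,1}$. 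Since $w(x)<0$, this contradicts the hypothesis that every element of $R_{\m,1}$ is positive at $w$, completing the contrapositive.

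The argument is short and I do not expect a serious obstacle. The only point requiring care is the observation that the coprimality condition defining $R_\m$ is automatic for elements of the coset $1+\m_0$, so that prescribing the signature there already lands us inside $R_{\m,1}$ without any further approximation (in particular Lemma~\ref{lem:approx} is not needed here). The role of the hypothesis $w\nmid\m_\infty$ is precisely that it leaves the sign at $w$ unconstrained by the condition $[x]_\m=1$, which is exactly what permits the signature-approximation result to realize a negative value at $w$ while keeping $x$ inside $R_{\m,1}$.
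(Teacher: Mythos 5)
Your proof is correct and follows essentially the same route as the paper's: the forward direction is immediate from the definition of $R_{\m,1}$, and the converse uses \cite[Proposition~2.2(i)]{Nar} to find an element of $1+\m_0$ that is positive at every $w'\mid\m_\infty$ but negative at $w$, yielding an element of $R_{\m,1}$ violating positivity. Your explicit verification that elements of $1+\m_0$ are automatically coprime to $\m_0$ (so no further approximation is needed) is a point the paper leaves implicit, but it is the same argument.
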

\begin{proof}
First, suppose that $w(x)>0$ for all $x\in R_{\m,1}$, and assume that $w\nmid\m_\infty$. By definition, 
\[
R_{\m,1}=\{x\in 1+\m_0 : v(x)>0 \text{ for all } v\mid\m_\infty\},
\]
and \cite[Proposition~2.2(i)]{Nar} asserts that the coset $1+\m_0$ contains (infinitely many) elements of every signature. Hence, there exists $a\in R_{\m,1}$ with $v(a)>0$ for every $v\mid\m_\infty$ and $w(a)<0$. This contradicts that $w(x)>0$ for all $x\in R_{\m,1}$, so we must have $w\mid\m_\infty$. The other direction is obvious.
\end{proof}

\begin{proposition}\label{prop:func1}
Let $\m$ and $\n$ be moduli for $K$, and let $\Gamma$ and $\Lambda$ be subgroups of $(R/\m)^*$ and $(R/\n)^*$, respectively. Then
\begin{enumerate}
\item $R_{\n,\Lambda}\subseteq R_{\m,\Gamma}$ if and only if $(\m,\Gamma)\leq(\n,\Lambda)$.
\item If the equivalent conditions from (1) are satisfied, so that there is a canonical inclusion of semigroups $\iota:R\rtimes R_{\n,\Lambda}\hookrightarrow R\rtimes R_{\m,\Gamma}$, then there is an injective *-homomorphism $C^*(R\rtimes R_{\n,\Lambda})\to C^*(R\rtimes R_{\m,\Gamma})$ such that $v_{(b,a)}\mapsto v_{(\iota(b),\iota(a))}$.
\end{enumerate}
\end{proposition}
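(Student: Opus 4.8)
The plan is to split the argument into the two parts of the statement. For part (1), I would first unwind the definitions. Recall $R_{\n,\Lambda}=\{a\in R_\n : [a]_\n\in\Lambda\}$ and $R_{\m,\Gamma}=\{a\in R_\m : [a]_\m\in\Gamma\}$. For the ``if'' direction, assume $(\m,\Gamma)\leq(\n,\Lambda)$, so $\m\mid\n$ and $\pi_{\n,\m}(\Lambda)\subseteq\Gamma$. If $a\in R_{\n,\Lambda}$, then $a\in R_\n\subseteq R_\m$ by the inclusion in diagram~\eqref{diagram:moduli}, and the commutativity of that diagram gives $[a]_\m=\pi_{\n,\m}([a]_\n)\in\pi_{\n,\m}(\Lambda)\subseteq\Gamma$, whence $a\in R_{\m,\Gamma}$. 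For the ``only if'' direction, assume $R_{\n,\Lambda}\subseteq R_{\m,\Gamma}$. I would first establish $\m\mid\n$ by verifying the two conditions $\m_0\mid\n_0$ and $\m_\infty\leq\n_\infty$ separately. For the finite part, if some prime $\p$ had $\m(\p)>\n(\p)$, I would use Lemma~\ref{lem:approx} to produce an element of $R_{\n,1}\subseteq R_{\n,\Lambda}$ with prescribed valuation at $\p$ that fails to be coprime to $\m_0$, contradicting $R_{\n,\Lambda}\subseteq R_{\m,\Gamma}\subseteq R_\m$; this forces $\m_0\mid\n_0$. For the infinite part, the key tool is Lemma~\ref{lem:detectsw}: if $w\mid\m_\infty$, then every element of $R_{\m,1}$ is positive at $w$; since $R_{\n,1}\subseteq R_{\n,\Lambda}\subseteq R_{\m,\Gamma}$, I can use this together with Lemma~\ref{lem:detectsw} applied to $\n$ to deduce $w\mid\n_\infty$, giving $\m_\infty\leq\n_\infty$. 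Having secured $\m\mid\n$, the map $\pi_{\n,\m}$ is defined, and I would then show $\pi_{\n,\m}(\Lambda)\subseteq\Gamma$: given $\lambda\in\Lambda$, surjectivity of $\textup{pr}_\n$ (Lemma~\ref{prop:onto}) lets me pick $a\in R_\n$ with $[a]_\n=\lambda$, so $a\in R_{\n,\Lambda}\subseteq R_{\m,\Gamma}$, and then $\pi_{\n,\m}(\lambda)=[a]_\m\in\Gamma$ by diagram~\eqref{diagram:moduli}.

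For part (2), the cleanest route is via the faithfulness criterion of Theorem~\ref{thm:faithful} combined with the universal presentation of Proposition~\ref{prop:CDLfamily}. Since $R\rtimes R_{\n,\Lambda}\subseteq R\rtimes R_{\m,\Gamma}$ as semigroups, the generators $\{v_{(\iota(b),\iota(a))}\}$ of $C^*(R\rtimes R_{\m,\Gamma})$, together with the corresponding spanning projections $e_{(x+\a)\times\a^\times}$ for $\a\in(\I_\n)_\n^+\subseteq\I_\m^+$, furnish a family in $C^*(R\rtimes R_{\m,\Gamma})$. I would check that this family satisfies the ``uppercase'' analogues of relations (Ta)--(Td) for the semigroup $R\rtimes R_{\n,\Lambda}$ — this is essentially automatic since every such relation already holds in $C^*(R\rtimes R_{\m,\Gamma})$ and the integral ideals in $\I_\n^+$ coprime to $\n_0$ are in particular coprime to $\m_0$. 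The universal property from Proposition~\ref{prop:CDLfamily} then produces the desired $*$-homomorphism $C^*(R\rtimes R_{\n,\Lambda})\to C^*(R\rtimes R_{\m,\Gamma})$ sending $v_{(b,a)}\mapsto v_{(\iota(b),\iota(a))}$.

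The main work, and the step I expect to be the principal obstacle, is establishing injectivity. Here I would invoke Theorem~\ref{thm:faithful} applied to the semigroup $R\rtimes R_{\n,\Lambda}$: fixing representatives $\a_\k$ for each class $\k\in\I_\n/i(K_{\n,\Lambda})$, I must show that the image under our map of each ``defect projection'' $\prod_{i=1}^m(e_{\a_\k\times\a_\k^\times}-e_{(y_i+\a_i)\times\a_i^\times})$ is nonzero in $C^*(R\rtimes R_{\m,\Gamma})$, whenever $y_i+\a_i\subsetneq\a_\k$ with $\a_i\in\I_\n^+$. The difficulty is that these projections live in the smaller diagonal $C_u^*(\J_{R\rtimes R_{\n,\Lambda}})$, but after applying the map they become genuine elements of $C_u^*(\J_{R\rtimes R_{\m,\Gamma}})$, and I must verify nonvanishing \emph{there}. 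The natural strategy is to use the faithful diagonal faithfulness result \cite[Proposition~5.6.21]{CELY} directly, exhibiting for each such product a point in $\Omega_R^\m$ (equivalently, an algebraic integer in $\a_\k\cap R_{\m,1}$ avoiding the finitely many cosets $y_i+\a_i$) on which the projection is supported; concretely, one uses Lemma~\ref{lem:approx} to find an element of $\a_\k\cap R_{\n,1}\subseteq\a_\k\cap R_{\m,1}$ lying in $\a_\k$ but outside each $y_i+\a_i$, exactly as in the independence argument of Proposition~\ref{prop:constructibleIdeals}. The subtle point to watch is that the representatives $\a_\k$ chosen for classes in $\I_\n/i(K_{\n,\Lambda})$ need not be adapted to $\I_\m/i(K_{\m,\Gamma})$, so some care is required to ensure the witnessing element survives the passage to the larger semigroup; but since $R_{\n,1}\subseteq R_{\m,1}$ and the valuation conditions produced by Lemma~\ref{lem:approx} are unaffected by enlarging the ambient congruence monoid, the construction goes through, yielding injectivity.
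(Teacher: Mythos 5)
Your ``if'' direction of (1) and your argument that $\pi_{\n,\m}(\Lambda)\subseteq\Gamma$ coincide with the paper's; the gaps are in your derivation of $\m\mid\n$ from $R_{\n,\Lambda}\subseteq R_{\m,\Gamma}$. For the finite part, the coprimality argument via Lemma~\ref{lem:approx} only handles primes with $\n(\p)=0<\m(\p)$: producing an element of $R_{\n,1}$ with nonzero valuation at $\p$ requires $\p\nmid\n_0$, and when $0<\n(\p)<\m(\p)$ every element of $R_{\n,1}$ is automatically coprime to $\p$, so no coprimality contradiction is available. Thus your argument proves only that the support of $\m_0$ is contained in that of $\n_0$, not the exponent inequalities $\m(\p)\le\n(\p)$ that $\m\mid\n$ demands. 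Exponents are congruence information, not coprimality information: the paper obtains them from the inclusion $R_{\n,1}\subseteq R_{\m,1}$, which gives $(1+\n_0)_+\subseteq(1+\m_0)_+$, hence $(\n_0)_+\subseteq\m_0$, hence $\n_0\subseteq\m_0$ because an ideal is generated by the totally positive elements it contains (proof of Lemma~\ref{lem:determinedbyray}). For the infinite part, your chain $R_{\n,1}\subseteq R_{\n,\Lambda}\subseteq R_{\m,\Gamma}$ lands in $R_{\m,\Gamma}$, whose elements need \emph{not} be positive at $w\mid\m_\infty$ (classes in $\Gamma$ may have negative sign components, e.g.\ when $\Gamma=(R/\m)^*$), so the hypothesis of Lemma~\ref{lem:detectsw} for $\n$ is not established. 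Both halves of your argument are missing the bridge $R_{\n,1}\subseteq R_{\m,1}$, which is exactly the assertion with which the paper's proof opens (stated there without proof). Be aware that this bridge is not a formal consequence of the monoid inclusion alone: for $K=\mathbb{Q}$, $\n_0=p\mathbb{Z}$, $\m_0=p^2\mathbb{Z}$, trivial $\m_\infty,\n_\infty$, and $\Lambda,\Gamma$ the full groups of residues, one has $R_{\n,\Lambda}=R_{\m,\Gamma}$ yet $1+p\in R_{\n,1}\setminus R_{\m,1}$ and $\m\nmid\n$; so filling your gap requires the pairs $(\m,\Gamma)$, $(\n,\Lambda)$ to be suitably reduced, and your proof cannot be completed as written.

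For (2), your construction of the $*$-homomorphism via Proposition~\ref{prop:CDLfamily} is fine and equivalent to the paper's verification of relations \ref{relation:(I)} and \ref{relation:(II)}. The injectivity step, however, contains a genuine error: the witness you describe --- an algebraic integer in $\a_\k\cap R_{\m,1}$ lying outside each coset $y_i+\a_i$ --- need not exist, because finitely many \emph{proper} cosets can cover $\a_\k$ (take $\a_\k=R$ and let the $y_i+\a_i$ be all $N(\p)$ cosets of a prime $\p\in\I_\n^+$). The corresponding defect projection is nonetheless nonzero; what makes it nonzero is the multiplicative coordinate of the constructible ideals, which your reduction to a statement about elements of $R$ discards. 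Under the identification of $C_u^*(\J_{R\rtimes R_{\m,\Gamma}})$ with the diagonal $D_\lambda(R\rtimes R_{\m,\Gamma})$, the image of $\prod_{i=1}^m(e_{\a_\k\times\a_\k^\times}-e_{(y_i+\a_i)\times\a_i^\times})$ is the projection onto $\ell^2$ of $[\a_\k\times(\a_\k\cap R_{\m,\Gamma})]\setminus\bigcup_{i=1}^m[(y_i+\a_i)\times(\a_i\cap R_{\m,\Gamma})]$, and this set is nonempty: since $y_i+\a_i\subsetneq\a_\k$ forces $\a_i\subsetneq\a_\k$, Lemma~\ref{lem:approx} (applied with the modulus $\n$, avoiding the \emph{ideals} $\a_i$ rather than the cosets, exactly as in Proposition~\ref{prop:constructibleIdeals}) yields $s\in(\a_\k\cap R_{\n,1})\setminus\bigcup_i\a_i\subseteq\a_\k\cap R_{\m,\Gamma}$, and then $(r,s)$ lies in this set for any $r\in\a_\k$. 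Once repaired in this way, your argument is just an unwinding of the paper's one-line proof: since $\I_\n^+\subseteq\I_\m^+$, the projections in question belong to the independent family of Proposition~\ref{prop:CI} for $R\rtimes R_{\m,\Gamma}$, so the criterion of Theorem~\ref{thm:faithful} is satisfied automatically.
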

\begin{proof}
(1): First, note that $R_{\n,\Lambda}\subseteq R_{\m,\Gamma}$ implies that $R_{\n,1}\subseteq R_{\m,1}$. We will now show that $\m_\infty\leq \n_\infty$. Suppose $w$ is a real embedding of $K$ such that $\m_\infty(w)=1$. Since $R_{\n,1}\subseteq R_{\m,1}$, we must have $w(x)>0$ for all $x\in R_{\n,1}$, so $w\mid \n_\infty$ by Lemma~\ref{lem:detectsw}.

Next we show that $\m_0\mid\n_0$. The inclusion $R_{\n,1}\subseteq R_{\m,1}$ implies that $(1+\n_0)_+\subseteq (1+\m_0)_+$, which in turn implies that $(\n_0)_+\subseteq (\m_0)_+$. Since ideals are generated by the totally positive elements that they contain (see the proof of Lemma~\ref{lem:determinedbyray}), we have $\n_0\subseteq \m_0$. 

Using commutativity of \eqref{diagram:moduli} and that $R_{\n,\Lambda}\subseteq R_{\m,\Gamma}$, we have 
\[
\pi_{\n,\m}(\Lambda)=\pi_{\n,\m}(\textup{pr}_\n(R_{\n,\Lambda}))=\textup{pr}_\m(R_{\n,\Lambda})\subseteq \textup{pr}_\m(R_{\m,\Gamma})=\Gamma,
\]
as desired.

For the converse, suppose $(\m,\Gamma)\leq(\n,\Lambda)$, so that $\m\mid\n$ and $\pi_{\n,\m}(\Lambda)\subseteq \Gamma$. Then $\textup{pr}_\m^{-1}(\pi_{\n,\m}(\Lambda))\subseteq \textup{pr}_\m^{-1}(\Gamma)=R_{\m,\Gamma}$, and commutativity of \eqref{diagram:moduli} implies $\textup{pr}_\m(R_{\n,\Lambda})=\pi_{\n,\m}(\Lambda)$,
so we have $R_{\n,\Lambda}\subseteq \textup{pr}_\m^{-1}(\pi_{\n,\m}(\Lambda))\subseteq R_{\m,\Gamma}$.

(2): Assume $R_{\n,\Lambda}\subseteq R_{\m,\Gamma}$. Then $\m\mid\n$ by part (1) which implies that $\I_\n^+\subseteq \I_\m^+$ . The collections $\{e_{(x+\a)\times \a^\times} : x\in R, \a\in\I_\n^+\}\cup\{0\}$ and $\{v_{(\iota(b),\iota(a))} : (b,a)\in R\rtimes R_{\n,\Lambda}\}$ of projections and isometries, respectively, in $C^*(R\rtimes R_{\m,\Gamma})$ satisfy the defining relations \ref{relation:(I)} and \ref{relation:(II)} for $C^*(R\rtimes R_{\n,\Lambda})$, so the universal property of $C^*(R\rtimes R_{\n,\Lambda})$ gives us a *-homomorphism $\psi:C^*(R\rtimes R_{\n,\Lambda})\to C^*(R\rtimes R_{\m,\Gamma})$ such that $\psi(v_{(b,a)})=v_{(\iota(b),\iota(a))}$ for all $(b,a)\in R\rtimes R_{\n,\Lambda}$.

The projections $\{e_{(x+\a)\times\a^\times} : x\in R, \a\in\I_\n^+\}$ are linearly independent in $C_u^*(\J_{R\rtimes R_{\m,\Gamma}})$ by Proposition~\ref{prop:CI}, so the hypotheses of Theorem~\ref{thm:faithful} are satisfied; hence, $\psi$ is injective.
\end{proof}

In particular, if we take $\m$ to be trivial, so that $\Gamma$ must also be trivial, then we obtain the following result.

\begin{corollary}
For each modulus $\n$ and each subgroup $\Lambda\subseteq (R/\n)^*$, there is an injective *-homomorphism $C^*(R\rtimes R_{\n,\Lambda})\to C^*(R\rtimes R^\times)$ such that $v_{(b,a)}\mapsto v_{(\iota(b),\iota(a))}$ where $\iota:R\rtimes R_{\n,\Lambda}\hookrightarrow R\rtimes R^\times$ is the canonical inclusion.
\end{corollary}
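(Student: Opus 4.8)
The plan is to deduce this directly from Proposition~\ref{prop:func1} by specializing the pair $(\m,\Gamma)$ to the trivial case. First I would observe that if $\m$ denotes the trivial modulus for $K$ (so $\m_0=R$ and $\m_\infty$ is identically zero), then no prime divides $\m_0$, whence $R_\m=R^\times$; moreover $(R/\m)^*$ is the trivial group, so $\Gamma=\{1\}$ is its only subgroup. With these choices $[a]_\m=1$ for every $a\in R_\m$, and therefore $R_{\m,\Gamma}=R_\m=R^\times$. This identifies the semigroup $R\rtimes R_{\m,\Gamma}$ with $R\rtimes R^\times$ and, consequently, $C^*(R\rtimes R_{\m,\Gamma})$ with $C^*(R\rtimes R^\times)$.

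Next I would verify that this trivial pair sits below every pair in the partial order, that is, $(\m,\Gamma)\leq(\n,\Lambda)$ for every modulus $\n$ and every subgroup $\Lambda\subseteq(R/\n)^*$. The divisibility $\m\mid\n$ is automatic because the trivial modulus takes the value $0$ at every real embedding and every prime; and since $(R/\m)^*$ is trivial, the condition $\pi_{\n,\m}(\Lambda)\subseteq\Gamma=\{1\}$ holds vacuously. Thus the equivalent conditions of Proposition~\ref{prop:func1}(1) are satisfied, and there is a canonical inclusion of semigroups $\iota:R\rtimes R_{\n,\Lambda}\hookrightarrow R\rtimes R_{\m,\Gamma}=R\rtimes R^\times$.

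Finally I would invoke Proposition~\ref{prop:func1}(2), which yields an injective \mbox{*-homomorphism} $C^*(R\rtimes R_{\n,\Lambda})\to C^*(R\rtimes R_{\m,\Gamma})=C^*(R\rtimes R^\times)$ determined by $v_{(b,a)}\mapsto v_{(\iota(b),\iota(a))}$, which is precisely the asserted map. I do not expect any genuine obstacle here: the entire analytic content—the construction of the \mbox{*-homomorphism} via the universal property and its injectivity via the faithfulness criterion of Theorem~\ref{thm:faithful}—is already carried out in Proposition~\ref{prop:func1}. The only points requiring care are the two pieces of bookkeeping above, namely the identification $R_{\m,\Gamma}=R^\times$ for the trivial modulus and the observation that the trivial pair is a lower bound for $(\n,\Lambda)$ in the partial order.
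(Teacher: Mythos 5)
Your proposal is correct and is precisely the paper's argument: the paper obtains this corollary by specializing Proposition~\ref{prop:func1} to the trivial modulus $\m$ (forcing $\Gamma$ trivial), exactly as you do. Your added bookkeeping—checking $R_{\m,\Gamma}=R^\times$ and that the trivial pair $(\m,\Gamma)$ lies below every $(\n,\Lambda)$ in the partial order—is the same verification the paper leaves implicit.
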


We can also ask what happens as the number field varies. Let $K$ and $K'$ be number fields with rings of integers $R$ and $R'$, respectively.

\begin{lemma}\label{lem:inclusionrays}
Suppose that $\m$ is a modulus for $K$ and that there is an inclusion of number fields $i:K\hookrightarrow K'$. Define a modulus $\tilde{\m}$ for $K'$ by $\tilde{\m}_\infty(w'):=\m_\infty(w'\circ i)$ for each real embedding $w':K'\hookrightarrow\mathbb{R}$ and $\tilde{\m}_0:=i(\m_0)R'$ where $i(\m_0)R'$ is the ideal of $R'$ generated by $i(\m_0)$. 
For each modulus $\m'$ of $K'$, we have $i(R_{\m,1})\subseteq R_{\m',1}'$ if and only if $\m'\mid \tilde{\m}$.
\end{lemma}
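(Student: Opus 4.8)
The plan is to prove the two implications separately after unwinding the definitions. Recall that $R_{\m,1}=\{a\in R^\times : a\equiv 1 \bmod \m_0 \text{ and } w(a)>0 \text{ for all } w\mid\m_\infty\}$, and that the divisibility $\m'\mid\tilde\m$ decomposes into an archimedean condition $\m'_\infty\le\tilde\m_\infty$ and a finite condition $\tilde\m_0\subseteq\m'_0$; since $\tilde\m_0=i(\m_0)R'$, the latter is equivalent to $i(\m_0)\subseteq\m'_0$. I will treat the archimedean and finite parts separately throughout, using Lemma~\ref{lem:detectsw} as the main tool for the archimedean part.

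For the implication $\m'\mid\tilde\m \Rightarrow i(R_{\m,1})\subseteq R'_{\m',1}$, I would fix $a\in R_{\m,1}$ and verify the two defining conditions of $R'_{\m',1}$ for $i(a)$. For the congruence, $i(a)-1=i(a-1)\in i(\m_0)\subseteq \tilde\m_0\subseteq\m'_0$, so $i(a)\equiv 1\bmod\m'_0$ (which in particular forces $i(a)$ to be coprime to $\m'_0$). For positivity, let $w'\mid\m'_\infty$; since $\m'_\infty\le\tilde\m_\infty$ we get $w'\mid\tilde\m_\infty$, i.e.\ $\m_\infty(w'\circ i)=1$, so $w'\circ i$ is a real embedding of $K$ dividing $\m_\infty$, and as $a\in R_{\m,1}$ this gives $w'(i(a))=(w'\circ i)(a)>0$. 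Hence $i(a)\in R'_{\m',1}$.

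For the converse, assume $i(R_{\m,1})\subseteq R'_{\m',1}$. For the archimedean condition I would show $\m'_\infty(w')\le\tilde\m_\infty(w')$ for every real embedding $w'$ of $K'$, where only the case $\m'_\infty(w')=1$ needs work. If $w'\mid\m'_\infty$, then Lemma~\ref{lem:detectsw} applied to $(K',\m')$ gives $w'(y)>0$ for all $y\in R'_{\m',1}$, hence $(w'\circ i)(a)=w'(i(a))>0$ for all $a\in R_{\m,1}$ by hypothesis; Lemma~\ref{lem:detectsw} applied to $(K,\m)$ then yields $w'\circ i\mid\m_\infty$, i.e.\ $\tilde\m_\infty(w')=1$, as required. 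For the finite condition, the hypothesis gives $i(a)-1=i(a-1)\in\m'_0$ for every $a\in R_{\m,1}$, so it suffices to prove the generation claim
\[
\m_0=\langle\, a-1 : a\in R_{\m,1}\,\rangle_R .
\]
Granting this, $\tilde\m_0=i(\m_0)R'$ is generated over $R'$ by the elements $i(a-1)$, each of which lies in $\m'_0$, whence $\tilde\m_0\subseteq\m'_0$.

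The main obstacle is the generation claim. The inclusion $\langle a-1\rangle_R\subseteq\m_0$ is immediate since each $a-1\in\m_0$. For the reverse I would check $v_\p(\langle a-1\rangle_R)=v_\p(\m_0)$ at every prime $\p$ of $R$ by exhibiting a suitable $a\in R_{\m,1}$. When $\p\nmid\m_0$, Lemma~\ref{lem:approx} produces a totally positive $a\in R_{\m,1}$ with $v_\p(a)=1$, so $a-1\equiv -1\bmod\p$ and $v_\p(a-1)=0$. When $\p\mid\m_0$ with $e=\m(\p)$, I would use the Chinese Remainder Theorem to construct $a$ with $a\equiv 1+\pi^e\bmod\p^{e+1}$ (for $\pi\in\p\setminus\p^2$) and $a\equiv 1$ modulo the prime-to-$\p$ part of $\m_0$; then $a-1\in\m_0$ with $v_\p(a-1)=e$ exactly, and adding a sufficiently high-valuation element of $\m_0$ makes $a$ totally positive without disturbing $v_\p(a-1)=e$, exactly as in the proofs of Lemmas~\ref{lem:approx} and~\ref{lem:determinedbyray}. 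This forces $v_\p(\langle a-1\rangle_R)\le e$, and combined with $\langle a-1\rangle_R\subseteq\m_0$ it gives equality at every prime, completing the claim and hence the lemma.
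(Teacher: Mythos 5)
Your proof is correct, and its first two thirds (the easy implication, and the archimedean half of the hard implication via Lemma~\ref{lem:detectsw}) coincide with the paper's argument. Where you genuinely diverge is the finite part of the hard implication. The paper never proves your generation claim $\m_0=\langle\, a-1 : a\in R_{\m,1}\,\rangle_R$; instead it observes that $(1+\m_0)_+\subseteq R_{\m,1}$ (total positivity trivially implies positivity at the places dividing $\m_\infty$), so the hypothesis $i(R_{\m,1})\subseteq R_{\m',1}'$ forces $i\bigl((1+\m_0)_+\bigr)\subseteq (1+\m_0')_+$; shifting by $1$ then gives $i\bigl((\m_0)_+\bigr)\subseteq \m_0'$, and since every element of a non-zero ideal is a difference of two totally positive elements of that ideal --- a fact already established in the proof of Lemma~\ref{lem:determinedbyray} via $x=(x+kN(\b))-kN(\b)$ --- one concludes $i(\m_0)\subseteq\m_0'$, i.e.\ $\m_0'\mid\tilde{\m}_0$, with no local analysis at all. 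Your route instead proves the sharper, intrinsic statement that the elements $a-1$, $a\in R_{\m,1}$, generate $\m_0$ with the correct valuation at every single prime, which you establish by a prime-by-prime CRT construction plus the positivization trick; this is correct (your use of Lemma~\ref{lem:approx} at primes $\p\nmid\m_0$ and your construction $a\equiv 1+\pi^e \bmod \p^{e+1}$ at primes $\p\mid\m_0$ both check out), but it re-derives approximation work that the paper's earlier machinery already packages. What your approach buys is a self-contained and quantitatively precise description of how $R_{\m,1}$ determines $\m_0$ (exact local valuations, including at the ramified primes); what the paper's approach buys is brevity, since only the crude fact that ideals are spanned by their totally positive elements is needed.
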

\begin{proof}
Suppose that $i(R_{\m,1})\subseteq R_{\m',1}'$. Then for each $w'\mid \m_\infty'$, we see that $w'\circ i(x)>0$ for every $x\in R_{\m,1}$, so $(w'\circ i)\mid\m_\infty$ by Lemma~\ref{lem:detectsw}. That is, $w'\mid \m_\infty'$ implies $w'\mid \tilde{\m}_\infty$, so we have $\m_\infty'\mid\tilde{\m}_\infty$.
The inclusion $i(R_{\m,1})\subseteq R_{\m',1}'$ also implies that $(1+i(\m_0))_+\subseteq (1+\m_0')_+$ where $(1+i(\m_0))_+$ and $(1+\m_0')_+$ denote the sets of totally positive elements in $1+i(\m_0)$ and $1+\m_0'$, respectively. It follows that $\tilde{\m}_0=i(\m_0)R'$ is contained in $\m_0'$, that is, $\m_0'\mid \tilde{\m}_0$. Thus, we have shown $i(R_{\m,1})\subseteq R_{\m',1}'$ implies $\m'\mid\tilde{\m}$.

For the converse, suppose that $\m'\mid \tilde{\m}$. Let $a\in R_{\m,1}$, so that $a\in 1+\m_0$ and $w(a)>0$ for every $w\mid\m_\infty$. We have $1+\tilde{\m}_0\subseteq 1+\m_0'$, and if $w'\mid \m_\infty'$, then $w'\mid\tilde{\m}$, so that $(w'\circ i)\mid \m_\infty$. Hence, $i(a)\in1+\m_0'$, and if $w'\mid\m_\infty'$, then $(w'\circ i)(a)>0$. That is, $i(a)\in R_{\m',1}'$. Hence, $i(R_{\m,1})\subseteq R_{\m',1}'$, as desired.
\end{proof}

In the setup from Lemma~\ref{lem:inclusionrays}, suppose that $\m'\mid \tilde{\m}$. The inclusion $i\vert_R:R\hookrightarrow R'$ induces homomorphisms $(R/\m_0)^*\to (R'/\tilde{\m}_0)^*$ and $\prod_{w\mid\m_\infty}\{\pm1\}\to \prod_{(w'\circ i)\mid \m_\infty}\{\pm1\}$. Combining these, gives us a homomorphism $\varphi:(R/\m)^*\to (R'/\tilde{\m})^*$. These maps give rise to the following commutative diagram
\begin{equation}\label{diagram:moduli2}
\xymatrix{
R_\m \ar@{->>}[d]^{\textup{pr}_\m}\ar@{^{(}->}[r]^{i\vert_{R_\m}} & R_{\tilde{\m}}'\ar@{->>}[d]^{\textup{pr}_{\tilde{\m}}} \ar@{^{(}->}[r]^{\textup{incl}}& R_{\m'}'\ar@{->>}[d]^{\textup{pr}_{\m'}}\\
(R/\m)^*\ar[r]^{\varphi} & (R'/\tilde{\m})^* \ar@{->>}[r]^{\pi_{\tilde{\m},\m'}} & (R'/\m')^*.
}\end{equation}

\begin{proposition}\label{prop:func2}
Let $\m$ and $\m'$ be moduli for $K$ and $K'$, respectively, and let $\Gamma$ and $\Gamma'$ be subgroups of $(R/\m)^*$ and $(R'/\m')^*$, respectively. Suppose that there is an inclusion of number fields $i:K\hookrightarrow K'$. Then, using the notation from the preceding discussion, we have the following: 
\begin{enumerate}
\item $i(R_{\m,\Gamma})\subseteq R_{\m',\Gamma'}'$ if and only if $\m'\mid\tilde{\m}$ and $\pi_{\tilde{\m},\m'}\circ\varphi(\Gamma)\subseteq \Gamma'$.
\item If the equivalent conditions in (1) are satisfied, so that there is an inclusion $\iota:K\rtimes K^\times\hookrightarrow K'\rtimes (K')^\times$ that restricts to an inclusion $R\rtimes R_{\m,\Gamma}\hookrightarrow R'\rtimes R_{\m',\Gamma'}'$,
then there is an injective *-homomorphism $C^*(R\rtimes R_{\m,\Gamma})\hookrightarrow C^*(R'\rtimes R_{\m',\Gamma'}')$ such that $v_{(b,a)}\mapsto v_{(\iota(b),\iota(a))}$ for all $(b,a)\in R\rtimes R_{\m,\Gamma}$.
\end{enumerate}
\end{proposition}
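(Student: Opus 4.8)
The plan is to handle both parts exactly as their single-field counterparts in Proposition~\ref{prop:func1}: part~(1) is a purely number-theoretic equivalence that I would reduce to the level of rays via Lemma~\ref{lem:inclusionrays} and Lemma~\ref{lem:detectsw} and then finish by chasing the diagram~\eqref{diagram:moduli2}, while part~(2) is produced from the universal property of $C^*(R\rtimes R_{\m,\Gamma})$ (Proposition~\ref{prop:CDLfamily}) with injectivity supplied by Theorem~\ref{thm:faithful}.

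For part~(1), forward direction, assume $i(R_{\m,\Gamma})\subseteq R_{\m',\Gamma'}'$. As in the proof of Proposition~\ref{prop:func1}(1), the first step is to reduce to rays, i.e.\ to establish $i(R_{\m,1})\subseteq R_{\m',1}'$: coprimality of the images to $\m_0'$ is forced by Lemma~\ref{lem:approx} (were some $\p'\mid\m_0'$ to lie over a prime $\p\nmid\m_0$, one could build an element of $R_{\m,1}$ whose image is divisible by $\p'$, contradicting $i(R_{\m,1})\subseteq R_{\m'}'$), and the positivity conditions at the real places are controlled by Lemma~\ref{lem:detectsw} applied on both sides. Lemma~\ref{lem:inclusionrays} then yields $\m'\mid\tilde{\m}$, which makes the commutative diagram~\eqref{diagram:moduli2} available. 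Chasing it gives $\pi_{\tilde{\m},\m'}\circ\varphi(\Gamma)=\pi_{\tilde{\m},\m'}\circ\varphi(\textup{pr}_\m(R_{\m,\Gamma}))=\textup{pr}_{\m'}(i(R_{\m,\Gamma}))\subseteq\textup{pr}_{\m'}(R_{\m',\Gamma'}')=\Gamma'$. For the converse, assuming $\m'\mid\tilde{\m}$ and $\pi_{\tilde{\m},\m'}\circ\varphi(\Gamma)\subseteq\Gamma'$, I would take $a\in R_{\m,\Gamma}$ and verify $i(a)\in R_{\m',\Gamma'}'$ directly: $i(a)$ is coprime to $\m_0'$ because it is coprime to $\tilde{\m}_0=i(\m_0)R'$ and $\m_0'\mid\tilde{\m}_0$, while $[i(a)]_{\m'}=\pi_{\tilde{\m},\m'}(\varphi([a]_\m))\in\pi_{\tilde{\m},\m'}\circ\varphi(\Gamma)\subseteq\Gamma'$ by~\eqref{diagram:moduli2} again.

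For part~(2), the construction rests on extension of ideals $\a\mapsto\a R'$. Since $\a R'$ is coprime to $\tilde{\m}_0$ and $\m_0'\mid\tilde{\m}_0$, it is coprime to $\m_0'$, so $\a R'\in\I_{\m'}'^+$, and I would define a map of semilattices $\J_{R\rtimes R_{\m,\Gamma}}\to\J_{R'\rtimes R_{\m',\Gamma'}'}$ by $(x+\a)\times\a^\times\mapsto(i(x)+\a R')\times(\a R')^\times$. Setting $V_{(b,a)}:=v_{(i(b),i(a))}$ and $E_{(x+\a)\times\a^\times}:=e_{(i(x)+\a R')\times(\a R')^\times}$ inside $C^*(R'\rtimes R_{\m',\Gamma'}')$, I would check that these families satisfy the defining relations~\ref{relation:(I)} and~\ref{relation:(II)} for $C^*(R\rtimes R_{\m,\Gamma})$: equivariance of the semilattice map under $\iota$ yields the covariance relation $V_pE_XV_p^*=E_{pX}$, and compatibility of extension with intersections gives $E_XE_Y=E_{X\cap Y}$. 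The universal property then produces $\psi$ with $\psi(v_{(b,a)})=v_{(\iota(b),\iota(a))}$. For injectivity I would invoke Theorem~\ref{thm:faithful}: $\psi$ carries each defect projection $\prod_{i}(e_{\a_\k\times\a_\k^\times}-e_{(y_i+\a_i)\times\a_i^\times})$ to the analogous projection over the extended ideals, and since $y_i+\a_i\subsetneq\a_\k$ forces $\a_i\subsetneq\a_\k$ and hence $\a_iR'\subsetneq\a_\k R'$, the extended cosets remain properly contained, so these images are nonzero by independence of $\J_{R'\rtimes R_{\m',\Gamma'}'}$ (Proposition~\ref{prop:CI}). Equivalently, the semilattice map is injective, so the induced map on diagonals is injective by \cite[Proposition~5.6.21]{CELY} as in Proposition~\ref{prop:inclusionofsemilattices}, and Proposition~\ref{prop:faithfulondiagonal} finishes.

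The step I expect to be the main obstacle is checking that ideal extension $\a\mapsto\a R'$ is compatible with the full semilattice structure across the field extension: I need $(\a\cap\b)R'=\a R'\cap\b R'$, preservation of empty coset-intersections (i.e.\ $(\a+\b)R'\cap R=\a+\b$), and preservation of proper containments $\a_i\subsetneq\a_\k$. All of these follow from $R\hookrightarrow R'$ being faithfully flat, as $R'$ is a finitely generated torsion-free, hence projective, module over the Dedekind domain $R$; this gives $\mathfrak{c}R'\cap R=\mathfrak{c}$ and commutation of extension with finite intersections. Assembling these facts into the well-definedness, equivariance, and injectivity of the semilattice map is the technical heart of part~(2), whereas part~(1) is routine once the reduction to rays is in place.
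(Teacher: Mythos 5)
Your part (1) is essentially the paper's own proof: the same reduction to rays, then Lemma~\ref{lem:inclusionrays}, then a chase of diagram~\eqref{diagram:moduli2} in both directions. One small remark: the coprimality point you propose to extract from Lemma~\ref{lem:approx} is automatic, since $i(R_{\m,1})\subseteq i(R_{\m,\Gamma})\subseteq R_{\m',\Gamma'}'\subseteq R_{\m'}'$; the real content of the reduction $i(R_{\m,\Gamma})\subseteq R_{\m',\Gamma'}'\Rightarrow i(R_{\m,1})\subseteq R_{\m',1}'$ is why $[i(a)]_{\m'}$ must equal $1$, rather than merely lie in $\Gamma'$, for $a\in R_{\m,1}$ --- a step the paper itself also asserts without argument, so your write-up is no less complete than the original here.

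Part (2) is where you genuinely diverge, and your route is correct. The paper does not construct the embedding directly: it invokes the Cuntz--Deninger--Laca functoriality theorem for the full $ax+b$-semigroups, \cite[Proposition~3.2 and Theorem~4.13]{CDL}, to obtain an injective $\psi:C^*(R\rtimes R^\times)\to C^*(R'\rtimes (R')^\times)$ with $v_{(b,a)}\mapsto v_{(\iota(b),\iota(a))}$, composes with the embedding $\theta$ of $C^*(R\rtimes R_{\m,\Gamma})$ into $C^*(R\rtimes R^\times)$ supplied by Proposition~\ref{prop:func1}, notes that $\psi\circ\theta$ lands in the image of the corresponding embedding $\theta'$ for $(K',\m',\Gamma')$ (generators go to generators), and sets $\rho=(\theta'\vert_{\textup{im}(\theta')})^{-1}\circ\psi\circ\theta$. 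You instead build the map from the universal property by pushing the semilattice forward along ideal extension $\a\mapsto\a R'$, and get injectivity from Theorem~\ref{thm:faithful}. The three commutative-algebra facts you isolate --- $(\a\cap\b)R'=\a R'\cap \b R'$, $\c R'\cap R=\c$, and preservation of proper inclusions --- are indeed valid (by flatness/faithful flatness of $R\subseteq R'$, or by a direct computation with valuations and ramification indices), and they are exactly what is needed for your semilattice map to be well defined, meet-preserving (including empty intersections), equivariant, and injective; the nonvanishing of the image defect projections then follows from independence (Proposition~\ref{prop:CI} applied to $(K',\m',\Gamma')$) as you say. What the paper's route buys is brevity: all of this commutative algebra is outsourced to \cite{CDL}. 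What your route buys is self-containedness and an explicit formula for the embedding on the diagonal, $e_{(x+\a)\times\a^\times}\mapsto e_{(i(x)+\a R')\times(\a R')^\times}$, which the paper's diagram chase leaves implicit; in effect you run, across the field extension, the same argument the paper uses for Proposition~\ref{prop:func1}(2) within a single field.
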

\begin{proof}
(1): Suppose that $i(R_{\m,\Gamma})\subseteq R_{\m',\Gamma'}'$. Then $i(R_{\m,1})\subseteq R_{\m',1}'$, so Lemma~\ref{lem:inclusionrays} implies that $\m'\mid\tilde{\m}$. Let $\gamma\in\Gamma$, and write $\gamma=[a]_\m$ for some $a\in R_{\m,\Gamma}$. Using commutativity of \eqref{diagram:moduli2}, we have 
\[
\pi_{\tilde{\m},\m'}\circ\varphi(\gamma)=\pi_{\tilde{\m},\m'}([i(a)]_{\tilde{\m}})=[i(a)]_{\m'}.
\]
Since $i(a)$ lies in $R_{\m',\Gamma'}'$ by assumption, we have $[i(a)]_{\m'}\in\Gamma'$. Hence, $\pi_{\tilde{\m},\m'}\circ\varphi(\Gamma)\subseteq \Gamma'$.

For the converse, suppose that $\m'\mid\tilde{\m}$ and $\pi_{\tilde{\m},\m'}\circ\varphi(\Gamma)\subseteq \Gamma'$. Let $a\in R_{\m,\Gamma}$. We need to show that $i(a)$ lies in $R_{\m',\Gamma'}'$, that is, we need to show that $[i(a)]_{\m'}$ lies in $\Gamma'$. By commutativity of \eqref{diagram:moduli2}, we have $[i(a)]_{\m'}=\pi_{\tilde{\m},\m'}\circ\varphi([a]_\m)$. Since $[a]_\m\in \Gamma$ and $\pi_{\tilde{\m},\m'}\circ\varphi(\Gamma)\subseteq \Gamma'$, we have $\pi_{\tilde{\m},\m'}\circ\varphi([a]_\m)\in\Gamma'$, as desired.

(2): By \cite[Proposition~3.2~and~Theorem~4.13]{CDL}, there is an injective *-homomorphism $\psi: C^*(R\rtimes R^\times)\to C^*(R'\rtimes (R')^\times)$ such that $\psi(v_{(b,a)})=v_{(\iota(b),\iota(a))}$. Let $\theta$ and $\theta'$ by the canonical injective *-homomorphisms $\theta:C^*(R\rtimes R_{\m,\Gamma})\to C^*(R\rtimes R^\times)$ and $\theta':C^*(R'\rtimes R'_{\Gamma'})\to C^*(R'\rtimes (R')^\times)$ from Proposition~\ref{prop:func1}. There is a (unique) *-homomorphism $\rho$ such that the following diagram commutes:
\[\xymatrix{
C^*(R\rtimes R^\times)\ar@{^{(}->}[r]^{\psi} & \textup{im}(\theta')\ar[d]_{\cong}^{(\theta'\vert_{\textup{im}(\theta')})^{-1}}.\\
C^*(R\rtimes R_{\m,\Gamma})\ar@{^{(}->}[u]^{\theta}\ar@{-->}[r]^{\rho} &  C^*(R'\rtimes R_{\m',\Gamma'}').
}\]
Moreover, it is not difficult to see that $\rho$ is injective and $\rho(v_{(b,a)})=v_{(\iota(b),\iota(a))}$.
\end{proof}

\end{document}